\newenvironment{claim}[1]
    {\par\addvspace{\medskipamount}\noindent\emph{#1.}\em\ \ignorespaces}
    {\normalfont\par\addvspace{\medskipamount}}
\theoremstyle{plain}
\newtheorem{theorem}{Theorem}[section]
\newtheorem{lemma}[theorem]{Lemma}
\newtheorem{corollary}[theorem]{Corollary}
\newtheorem{proposition}[theorem]{Proposition}
\theoremstyle{definition}
\newtheorem{definition}[theorem]{Definition}
\newtheorem{remark}[theorem]{Remark}
\title{Braid groups of imprimitive complex reflection groups}
\author{Ruth Corran}
\author{Eon-Kyung Lee}
\author{Sang-Jin Lee}
\address{The American University of Paris, 147 rue de Grenelle, 75007, France}
\email{Ruth.Corran@AUP.fr}
\address{Department of Mathematics, Sejong University, Seoul, 143-747, Korea}
\email[corresponding author]{eonkyung@sejong.ac.kr}
\address{Department of Mathematics, Konkuk University, Seoul, 143-701, Korea}
\email{sangjin@konkuk.ac.kr}
\begin{document}

\begin{abstract}
We obtain new presentations for the imprimitive complex reflection groups
of type $(de,e,r)$ and their braid groups $B(de,e,r)$ for $d,r\ge 2$.
Diagrams for these presentations are proposed.
The presentations have much in common with Coxeter presentations of
real reflection groups.
They are positive and homogeneous, and give rise to quasi-Garside structures.
Diagram automorphisms correspond to group automorphisms.
The new presentation shows how the braid group $B(de,e,r)$
is a semidirect product of the braid group of affine type $\widetilde{\mathbf{A}}_{r-1}$
and an infinite cyclic group.
Elements of $B(de,e,r)$ are visualized
as geometric braids on $r+1$ strings
whose first string is pure and whose winding number is a multiple of $e$.
We classify periodic elements, and show that the roots are unique up to conjugacy
and that the braid group $B(de,e,r)$ is strongly translation discrete.

\medskip\noindent
{\em Keywords\/}:
Complex reflection group;
braid group;
Garside group;
periodic element;
translation number.\\
{\em 2010 Mathematics Subject Classification\/}:
Primary 20F55, 20F36;
Secondary 20F05, 20F10, 03G10\\
\end{abstract}

\maketitle

\tableofcontents

\section{Introduction}
\label{sec:Introd}

\subsection{Reflection groups and braid groups}

A complex reflection group $G$ on a finite dimensional complex vector space $V$
is a subgroup of $\mbox{GL}(V)$ generated by complex reflections---nontrivial
elements that fix a complex hyperplane in $V$ pointwise.
Finite (irreducible) complex reflection groups were classified
by Shephard and Todd~\cite{ST54}:
\begin{enumerate}
\item
a general infinite family $G(de,e,r)$ for positive integral parameters $d, e, r$;

\item
34 exceptions, labeled $G_4, G_5, \ldots, G_{37}$.
\end{enumerate}
For the presentations of the above groups, see~\cite{BMR98}.

Finite complex reflection groups are divided into two main classes:
primitive and imprimitive.
The general infinite family $G(de,e,r)$ are imprimitive except $G(1,1,r)$ and $G(de,e,1)$.
($G(1,1,r)$ is the symmetric group of degree $r$ and
$G(de,e,1)$ is the cyclic group of order $d$.)
The exceptional groups $G_4, G_5, \ldots, G_{37}$ are primitive.

The complex reflection group of type $(de,e,r)$ is defined as
$$G(de,e,r) = \left\{\, \begin{array}{cc}
\mbox{$r\times r$ monomial matrices}\\
\mbox{$(x_{ij})$ over $\{0\} \cup \mu_{de}$}
\end{array} \Biggm|
\prod_{x_{ij}\neq 0} x_{ij}^d = 1
\,\right\},$$
where $\mu_{de}$ is the set of $de$-th roots of unity.
Special cases of $G(de,e,r)$ are isomorphic to real reflection groups:
$G(1,1,r)\cong G({\mathbf{A}}_{r-1})$,
$G(2,1,r)\cong G({\mathbf{B}}_{r})$,
$G(2,2,r)\cong G(\mathbf{D}_{r})$ and
$G(e,e,2)\cong G(\mathbf{I}_2(e))$,
where $G(W)$ denotes the Coxeter group of type $W$.
For all the other parameters, $G(de,e,r)$ has no real structure.

The braid group of a complex reflection group is defined as the
fundamental group of the regular orbits.
For these braid groups, the presentations and the centers are shown
in~\cite{BMR98, BDM02, BM04, Bes06b}.

The braid groups $B(de,e,r)$ of the complex reflection groups $G(de,e,r)$
are divided into two cases: $d=1$ and $d\ge 2$.
For any $d, d'\ge 2$, $B(de,e,r)=B(d'e,e,r)\neq B(e,e,r)$~\cite{BMR98}.
It was shown in~\cite{BC06, CP11} that the braid groups $B(e,e,r)$
are Garside groups.

\subsection{Outline and main results}

In this paper, we propose new presentations and diagrams
for the imprimitive reflection groups
$G(de,e,r)$ and their braid groups $B(de,e,r)$ for $d,r\ge 2$ and $e\ge 1$.
These presentations have much in common with Coxeter presentations
of real reflection groups.
They are positive and homogeneous, and give rise to quasi-Garside structures.
Diagram automorphisms correspond to group automorphisms.

To motivate our approach, we review in \S\ref{sec:prelim} the presentations
for the free group $F_2$ and the braid groups $B(\mathbf{I}_2(e))$ and $B(e,e,r)$.

As a generic version of the imprimitive reflection groups $G(e,e,r)$,
Shi~\cite{Shi02} introduced the complex reflection group $G(\infty,\infty,r)$
and showed that $G(\infty,\infty,r)$ is isomorphic to the affine
reflection group of type ${\widetilde{\mathbf{A}}}_{r-1}$.
In \S\ref{sec:Binfty},
we propose  new presentations for $G(\infty,\infty,r)$
and its braid group $B(\infty,\infty,r)$.
We show how the braid group $B(de,e,r)$ is a semidirect product
of $B(\infty, \infty, r)$ and an infinite cyclic group,
and then show how $G(de,e,r)$ is a semidirect product of $G(de, de, r)$
and a cyclic group of order $d$.
The new presentations give rise to quasi-Garside structures on the braid groups
$B(\infty, \infty, r)$ and $B(de,e,r)$.

In \S\ref{sec:GeomBraid}, we explore some properties of $B(de,e,r)$.
Elements of $B(de,e,r)$
will be interpreted as geometric braids
on $r+1$ strings whose first string is pure and whose winding number
around the first string is a multiple of $e$.
Using this interpretation, we show that the $k$-th root of an element of $B(de,e,r)$,
if exists, is unique up to conjugacy for any nonzero integer $k$,
show that $B(de,e,r)$ is strongly translation discrete,
and classify periodic elements of $B(de,e,r)$.

The braid group $B(\infty,\infty,r)$ is isomorphic to $B({\widetilde{\mathbf{A}}}_{r-1})$.
In \S\ref{sec:Atilde}, we propose an ${\widetilde{\mathbf{A}}}$-type presentation
for $B(de,e,r)$ which is also positive and homogeneous.

\smallskip
\emph{Notations.} \
Denote by $\langle  b_1 b_2 \cdots b_l\rangle^k$  the cyclic  product
$b_1 b_2 \cdots b_l b_1 b_2 \cdots $ with  $k$ factors.
For example, $\langle  b_1 b_2 b_3\rangle^2 = b_1 b_2$ and
$\langle b_1 b_2 b_3\rangle^5 = b_1 b_2 b_3 b_1 b_2$.
If $w$ is the word $b_1 b_2 \ldots b_l$, then we will write
$\langle w\rangle^k$ as a shorthand for $\langle b_1 b_2 \cdots b_l\rangle^k$.

\section{Preliminary material}
\label{sec:prelim}

\subsection{The free group $F_2$ and the braid group of type $\mathbf{I}_2(e)$}
\label{sec:F2_I2e}

Here we review the presentations of
the free group $F_2$ and the braid group of type $\mathbf{I}_2(e)$.
These presentations are not necessary for the work of this paper,
but they will give an intuition about our approach to the
braid group $B(de,e,r)$.

\subsubsection{Presentations for the free group $F_2$}

Consider the following two presentations for the free group $F_2$:
\begin{align*}
F_2 &=\langle \, t_0,t_1\mid \quad\rangle;\\
F_2 & = \langle \, t_i,\ i\in{\mathbb Z}\mid
t_it_{i-1}=t_jt_{j-1}\ \mbox{ for all $i,j\in{\mathbb Z}$}\,\rangle.
\end{align*}
The second presentation is obtained from the first one
by adding new generators $t_i$ for $i\in{\mathbb Z}\setminus\{0,1\}$
together with defining relations $\cdots =t_2t_1=t_1t_0=t_0t_{-1}=\cdots$.
These presentations can be described as in Figure~\ref{fig:FreeGp}(a,b).
The first diagram is also known as ${\widetilde{\mathbf{A}}}_1$.
In the diagram for the second presentation,
countably many nodes are tangent to a line labeled 2,
which means the relation $t_it_{i-1}=t_jt_{j-1}$ for $i,j\in{\mathbb Z}$.
It is known that the second presentation gives
rise to a quasi-Garside structure on $F_2$~\cite{Bes06a,DDGKM14}.
One may regard it as a \emph{dual} presentation of the first one.

\begin{figure}
$$\begin{array}{ccc}
\begin{xy}
(0,0) *++={\rule{0pt}{4pt}} *\frm{o};
(20,0) *++={\rule{0pt}{4pt}} *\frm{o}**@{-}  ?(0.5) *!/_2.5mm/{\infty};
(0,-4) *++={t_0};
(20,-4) *++={t_1}
\end{xy}&&
\begin{xy}
(-30,2) *++={\cdots};
( 30,2) *++={\cdots} **@{-};
(-20,0) *++={\rule{0pt}{4pt}} *\frm{o};
(-10,0) *++={\rule{0pt}{4pt}} *\frm{o};
(0,0) *++={\rule{0pt}{4pt}} *\frm{o};
(10,0) *++={\rule{0pt}{4pt}} *\frm{o};
(20,0) *++={\rule{0pt}{4pt}} *\frm{o};
(10,0) *++={\rule{0pt}{4pt}} *\frm{o};
(25,4) *++={2};
(-20,-4) *++={t_{-2}};
(-10,-4) *++={t_{-1}};
(0,-4) *++={t_0};
(10,-4) *++={t_1};
(20,-4) *++={t_2};
\end{xy}\\
\mbox{(a) Diagram for $F_2$} &\qquad\qquad&
\mbox{(b) Dual diagram for $F_2$}\\[1em]
\begin{xy}
(0,0) *++={\rule{0pt}{4pt}} *\frm{o};
(20,0) *++={\rule{0pt}{4pt}} *\frm{o}**@{-}  ?(0.5) *!/_2.5mm/{e};
(0,-4) *++={t_0};
(20,-4) *++={t_1}
\end{xy}&&
\begin{xy}
(-2, 0) *{\ellipse(12.5,12.5){}};
(-14,0) *++={2};
(3.8, 12) *++={\rule{0pt}{4pt}} *\frm{o}; (8, 11) *++={t_1};
(9.5, 5) *++={\rule{0pt}{4pt}} *\frm{o}; (13, 3) *++={t_0};
(9.5,-5) *++={\rule{0pt}{4pt}} *\frm{o}; (14,-8) *++={t_{e-1}};
(4,-12) *++={\rule{0pt}{4pt}} *\frm{o}; (10,-14) *++={t_{e-2}};
(-4,15) *{\cdots};
(-4,-15) *{\cdots};
\end{xy}\\
\mbox{(c) Diagram for $B(\mathbf{I}_2(e))$} &\qquad\qquad&
\mbox{(d) Dual diagram for $B(\mathbf{I}_2(e))$}
\end{array}
$$
\caption{Diagrams for $F_2$ and $B(\mathbf{I}_2(e))$}
\label{fig:FreeGp}
\end{figure}
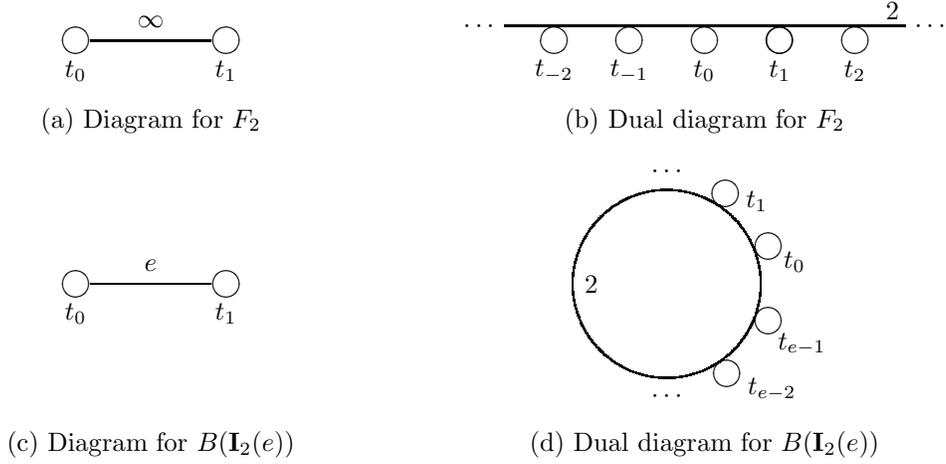

\subsubsection{Presentations for the braid group $B(\mathbf{I}_2(e))$}

The following are well-known presentations for the braid group of
the dihedral group on $2e$ elements, denoted $B(\mathbf{I}_2(e))$:
\begin{align*}
B(\mathbf{I}_2(e)) &= \langle \, t_0,t_1\mid
\langle  t_0t_1\rangle^e=\langle  t_1t_0\rangle^e\,\rangle;\\
B(\mathbf{I}_2(e)) &=
\langle \, t_0,t_1,\ldots,t_{e-1}\mid
t_1t_0=t_2t_1=\cdots=t_{e-1}t_{e-2}=t_0t_{e-1}\,\rangle.
\end{align*}
See Figure~\ref{fig:FreeGp}(c,d).
It is easy to see that the above two presentations are equivalent.
The first presentation is usually referred as the \emph{classical}
presentation, and the second as the \emph{dual} presentation.
Both presentations give rise to Garside structures.

The free group $F_2$ can be considered as a version of $B(\mathbf{I}_2(e))$
where $e$ is replaced by $\infty$.

\subsection{The braid group $B(e,e,r)$}
\label{sec:Beer}

Brou\'e, Malle and Rouquier~\cite{BMR98} obtained the following
presentation for the braid group  $B(e,e,r)$ for $e,r\ge 2$:
\begin{itemize}
\item Generators:  $\{t_0,t_1\} \cup S$ where $S = \{s_3,\ldots,s_r\}$;
\item Relations: the usual braid relations on $S$, along with\\
$\begin{array}{ll}
(P_1) & \langle t_1 t_0\rangle^e = \langle t_0 t_1\rangle^{e}, \\
(P_2) & s_3 t_i s_3 =t_i s_3 t_i \quad \mbox{for  $i = 0,1$}, \\
(P_3) & s_j t_i = t_i s_j \quad \mbox{for $i = 0,1$ and $4 \leq j \leq r$},\\
(P_4) & s_3(t_1 t_0)s_3(t_1 t_0) = (t_1 t_0)s_3(t_1 t_0)s_3.
\end{array}$
\end{itemize}

Furthermore, a presentation for the reflection group $G(e,e,r)$
is obtained by adding the relation
$a^2=1$ for all generators $a$, and the generators
are then all reflections.

The presentation is usually illustrated by the diagram shown
in Figure~\ref{fig:Beer}(a).
The diagram is to be read as a Coxeter graph for the real
reflection group case:
when nodes $a$ and $b$ are joined by an edge labeled $e$,
there is a relation $\langle ab\rangle^e=\langle ba\rangle^e$;
when nodes $a$ and $b$ are joined by an unlabelled edge,
there is a relation $aba=bab$;
when two nodes $a$ and $b$ are not connected by an edge,
there is a relation $ab=ba$;
the double line ``\raise1pt\hbox to 0pt{\rule{15pt}{.5pt}\hss}%
\raise3pt\hbox{\rule{15pt}{.5pt}}''
between the node $s_3$ and the edge connecting the nodes $t_1$ and $t_0$
indicates the relation
$s_3(t_1 t_0)s_3(t_1 t_0) = (t_1 t_0)s_3(t_1 t_0)s_3$.

Setting $r=2$ results in the classical presentation of type $\mathbf{I}_2(e)$,
and the subpresentation on the generators $s_3,\ldots, s_r$ is
the classical presentation of type ${\mathbf{A}}_{r-2}$ (see next subsection).
Indeed, the subpresentation on the generators $t_i, s_3,\ldots, s_r$
is the classical presentation of type ${\mathbf{A}}_{r-1}$.

\begin{figure}
$$\begin{array}{c}
\begin{xy}
(10,8)  *++={\rule{0pt}{4pt}} *\frm{o};
(10,-4) *++={\rule{0pt}{4pt}} *\frm{o} **@{-} ?(0.5) *!/^2mm/{e};
(10,2)  *++={} ;
(20,2)  *++={\rule{0pt}{4pt}} *\frm{o} **@{=};
(10,8)  *++={\rule{0pt}{4pt}} *\frm{o} ;
(20,2)  *++={\rule{0pt}{4pt}} *\frm{o} **@{-};
(10,-4) *++={\rule{0pt}{4pt}} *\frm{o} ;
(20,2)  *++={\rule{0pt}{4pt}} *\frm{o} **@{-};
(30,2)  *++={\rule{0pt}{4pt}} *\frm{o} **@{-};
(40,2)  *++={\rule{0pt}{4pt}} *\frm{o} **@{-};
(50,2)  *++={\dots} **@{-} ;
(60,2)  *++={\rule{0pt}{4pt}} *\frm{o} **@{-};
(6, 8)   *++={t_1} ;
(6,-4)  *++={t_0} ;
(23,-1) *++={s_3};
(33,-1) *++={s_4};
(43,-1) *++={s_5};
(63,-1) *++={s_r}
\end{xy}\\[4ex]
\mbox{(a) Brou\'e-Malle-Rouquier diagram for $B(e,e,r)$}\\[2mm]
\begin{xy}
(-1.7, 0) *{\ellipse(9,15){}};
(4.2, 12) *++={\rule{0pt}{4pt}} *\frm{o};
    (20,0) *++={\rule{0pt}{4pt}} *\frm{o} **@{-};
(4.2,-12) *++={\rule{0pt}{4pt}} *\frm{o};
    (20,0) *++={\rule{0pt}{4pt}} *\frm{o} **@{-};
(7.1, 4) *++={\rule{0pt}{4pt}} *\frm{o};
    (20,0) *++={\rule{0pt}{4pt}} *\frm{o} **@{-};
(7.1,-4) *++={\rule{0pt}{4pt}} *\frm{o};
    (20,0) *++={\rule{0pt}{4pt}} *\frm{o} **@{-};
(30, 0) *++={\rule{0pt}{4pt}} *\frm{o} **@{-};
(40, 0) *++={\rule{0pt}{4pt}} *\frm{o} **@{-};
(50, 0) *++={\dots}  **@{-};
(60, 0) *++={\rule{0pt}{4pt}} *\frm{o} **@{-};
(-9,0) *++={2};
(9, 12) *++={t_1};
(10,-15) *++={t_{e-2}};
(11, 1) *++={t_0};
(11,-7.5) *++={t_{e-1}};
(23,-3) *++={s_3};
(33,-3) *++={s_4};
(43,-3) *++={s_5};
(63,-3) *++={s_r};
(0, 17.5) *++={\cdot};
(1.1, 17) *++={\cdot};
(2, 16) *++={\cdot};
(0, -17.5) *++={\cdot};
(1.1, -17) *++={\cdot};
(2, -16) *++={\cdot};
\end{xy}\\[4ex]
\mbox{(b) Corran-Picantin diagram for $B(e,e,r)$}
\end{array}
$$
\caption{Diagrams for $B(e,e,r)$}
\label{fig:Beer}
\end{figure}
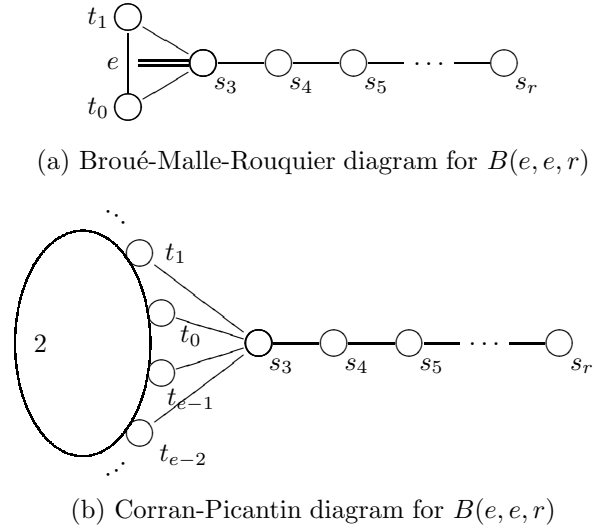

\subsubsection{Corran-Picantin presentation of $B(e,e,r)$}

The first author and Picantin~\cite{CP11} obtained the following
presentation for the braid group $B(e,e,r)$,
the generators of which are \emph{braid reflections} and which
gives rise to a Garside structure.

\begin{theorem}[\cite{CP11}]\label{beerpresthm}
The braid group $B(e,e,r)$ for $e,r\ge 2$ has the following presentation:
\begin{itemize}
\item Generators:  $T_e\cup S$
    where $T_e=\{t_i\mid i \in {\mathbb Z}/e \}$ and $S = \{s_3,\ldots,s_r\}$;
\item Relations: the usual braid relations on $S$, along with\\
$\begin{array}{lll}
(Q_1) & t_i t_{i-1} = t_j t_{j-1}
    \quad \mbox{for $i, j \in {\mathbb Z}/e$},\\
(Q_2) & s_3 t_i s_3 =t_i s_3 t_i
    \quad \mbox{for $i \in {\mathbb Z}/e$},\\
(Q_3) & s_j t_i  =t_i s_j
    \quad \mbox{for $i \in {\mathbb Z}/e$ and $4 \leq j \leq r$}.
\end{array}$
\end{itemize}
Furthermore, adding the relations $a^2=1$ for all generators $a$
gives a presentation of
the imprimitive reflection group $G(e,e,r)$, where the generators
are all reflections.
\end{theorem}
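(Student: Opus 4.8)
The plan is to prove that the presentation in the statement is Tietze-equivalent to the Brou\'e--Malle--Rouquier presentation of $B(e,e,r)$ recalled in \S\ref{sec:Beer}, which presents $B(e,e,r)$ by \cite{BMR98}. Write $P$ for the BMR presentation on $\{t_0,t_1\}\cup S$ with relations $(P_1)$--$(P_4)$ and the braid relations on $S$, and $Q$ for the presentation in the statement on $T_e\cup S$ with relations $(Q_1)$--$(Q_3)$ and the braid relations on $S$. Setting $\delta=t_1t_0$, the relation $(Q_1)$ forces $t_{i+1}t_i=\delta$ for all $i$, hence $t_{i+1}=\delta t_i^{-1}$ and $t_{i+2}=\delta t_i\delta^{-1}$; in particular $t_{2k}=\delta^k t_0\delta^{-k}$ and $t_{2k+1}=\delta^k t_1\delta^{-k}$, so every $t_i$ is a word in $t_0,t_1$. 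I would exhibit mutually inverse homomorphisms between the two groups fixing $t_0,t_1$ and $S$, with $t_i\mapsto w_i(t_0,t_1)$ for $2\le i\le e-1$ determined by $w_{i+1}=\delta w_i^{-1}$ in the direction $Q\to P$.

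First I would match $(Q_1)$ with $(P_1)$. Introducing the generators $t_2,\dots,t_{e-1}$ by the definitional relations $t_{i+1}t_i=\delta$ for $1\le i\le e-2$ is a Tietze move that leaves the group unchanged, and it reduces the remaining content of $(Q_1)$ to the single wrap-around relation $t_0t_{e-1}=\delta$. A short induction gives $t_0(t_1t_0)^{m-1}t_1=(t_0t_1)^m$ and $t_0(t_1t_0)^m=(t_0t_1)^mt_0$, so substituting the explicit value of $t_{e-1}$ (namely $\delta^{m-1}t_1\delta^{-(m-1)}$ when $e=2m$ and $\delta^{m}t_0\delta^{-m}$ when $e=2m+1$) shows that the wrap-around relation is equivalent, modulo the definitional relations, to $\langle t_1t_0\rangle^e=\langle t_0t_1\rangle^e$, i.e.\ to $(P_1)$. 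The two parities are checked separately but each is a one-line computation. The relations $(Q_3)$ and $(P_3)$ match immediately: since each $t_i$ is a word in $t_0,t_1$ and each $s_j$ with $j\ge 4$ commutes with both $t_0$ and $t_1$ by $(P_3)$, it commutes with every $t_i$, which is exactly $(Q_3)$.

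The crux, and the step I expect to be the main obstacle, is the passage between $(Q_2)$ and the pair $(P_2),(P_4)$. The content of $(P_4)$ is precisely $s_3\delta s_3=\delta s_3\delta s_3\delta^{-1}$, and I would show that this is exactly what is needed to propagate the braid relation from $\{t_0,t_1\}$ to all of $T_e$. Concretely, from $s_3t_0s_3=t_0s_3t_0$ and $s_3t_1s_3=t_1s_3t_1$ one rewrites $s_3t_2s_3=s_3\delta t_1^{-1}s_3$, pushes the outer $s_3$ inward using these two braid relations, applies $(P_4)$ once in the form $s_3\delta s_3=\delta s_3\delta s_3\delta^{-1}$, and collapses the result to $t_2s_3t_2$ using $\delta t_0^{-1}=t_1$ and $\delta t_1^{-1}=t_2$. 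Every step is an equality of words, so the computation reads in both directions: it yields $(Q_2)$ for $i=2$ from $(P_2),(P_4)$ and, conversely, $(P_4)$ from $(P_2)$ together with $s_3t_2s_3=t_2s_3t_2$. Because all the products $t_{i+1}t_i$ are the \emph{same} element $\delta$, the identical computation with indices shifted derives $s_3t_{i+1}s_3=t_{i+1}s_3t_{i+1}$ from $s_3t_is_3=t_is_3t_i$ and $s_3t_{i-1}s_3=t_{i-1}s_3t_{i-1}$, so an induction around ${\mathbb Z}/e$ establishes $(Q_2)$ in full. The delicate point is the bookkeeping: checking that the single relation $(P_4)$, stated only for $\delta=t_1t_0$, really suffices at every index, which works only because $(Q_1)$ identifies all the $t_{i+1}t_i$.

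Assembling these three comparisons shows that the homomorphisms $P\to Q$ and $Q\to P$ are well defined and mutually inverse, so $Q$ presents $B(e,e,r)$. For the final assertion, adding $a^2=1$ for all generators turns $P$ into a presentation of $G(e,e,r)$ by \cite{BMR98}; under the equivalence above each new generator $t_i$ is a conjugate of $t_0$ or $t_1$ (indeed $t_{2k}=\delta^k t_0\delta^{-k}$ and $t_{2k+1}=\delta^k t_1\delta^{-k}$), hence automatically an involution once $t_0^2=t_1^2=1$, so imposing $a^2=1$ on the generators of $Q$ defines the same group and exhibits every $t_i$ as a reflection.
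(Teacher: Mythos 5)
Your proposal is correct and follows essentially the same route the paper takes: the paper cites \cite{CP11} for this statement but gives the parallel argument for $B(\infty,\infty,r)$ in \S\ref{ssec:Pf_Binfty}, namely adjoining the extra generators $t_i$ via $(Q_1)$ as a Tietze move, showing $(Q_3)\Leftrightarrow(P_3)$ since every $t_i$ is a word in $t_0,t_1$, and showing $(Q_2)\Leftrightarrow(P_2)+(P_4)$ by the same two-step induction you describe. Your additional step matching the wrap-around part of $(Q_1)$ with $(P_1)$ (split by parity of $e$) is exactly the extra ingredient the finite case requires, and your computations there are right.
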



Denote by $\overline{\phantom{x}}$ the natural map
$B(e,e,r) \twoheadrightarrow G(e,e,r)$.
The generating reflections of $G(e,e,r)$
in this new presentation are the following $r\times r$ matrices:
$$\overline{t_i} =\left(\begin{array}{c|c}
 \begin{array}{rl}
0 &  \zeta_e^{-i} \\
 \zeta_e^i &  0 \\
\end{array}
& \begin{array}{c} \\[-.5em] 0\\[-.5em]\\  \end{array} \\
\hline
\\[-1em]
0  & ~ {\LARGE I_{r-2}} ~ \\[-1em]
\\
\end{array} \right)
\quad \mbox{ and } \quad
\overline{s_j} = \mbox{ permutation matrix of } (j-1 \ \ j),$$
where $\zeta_e$ is a primitive $e$-th root of unity.

\subsubsection{Diagram of type $(e,e,r)$}

The diagram shown in Figure~\ref{fig:Beer}(b) was proposed in ~\cite{CP11},
as a type $(e,e,r)$-analogy to the Coxeter graphs for the real reflection group case.
In the diagram, there are $e$ nodes labeled $t_i$, $i\in{\mathbb Z}/e$,
that are tangent to a circle labeled 2.
Whenever two nodes $a$ and $b$ are tangent to the circle, there
is a relation of the form
$aa'=bb'$ where $a'$ and $b'$ are
the nodes immediately preceding $a$ and $b$ respectively on the circle.
If two nodes $a$ and $b$ are neither connected by an edge nor tangent to the circle,
then there is a relation of the form $ab=ba$.

Naively, this appears like the \emph{dual} presentation
of $B(\mathbf{I}_2(e))$ (on the generators $t_i$)
combined with the \emph{classical} presentation of $B({\mathbf{A}}_{r-2})$
(on the generators $s_i$).

\medskip

The group of graph automorphisms of the diagram is the cyclic group of order $e$,
which may be generated by the automorphism $\tau$ which rotates the circle
in the positive direction by a turn of $2 \pi/e$.
This sends the node $t_i$ to the node $t_{i+1}$ for $i\in{\mathbb Z}/e$,
and fixes the nodes $s_j$ for $3 \leq j \leq r$.
By the symmetry of the presentation, these diagram automorphisms
give rise to automorphisms
of the braid group $B(e,e,r)$ as well as of the reflection group $G(e,e,r)$.
These automorphisms send (braid) reflections to (braid) reflections.

\subsubsection{Maps between the groups $B(e,e,r)$ for different values of $e$}

Consider a sequence of natural numbers $\{e_i\}_{i\ge 0}$ such that
$e_0=1$ and  $e_i$ divides $e_{i+1}$ for each $i\ge 0$.
For each $i$, define an epimorphism
$$\nu_{e_i}^{e_{i+1}} : B(e_{i+1},e_{i+1},r) \twoheadrightarrow B(e_i,e_i,r)$$
by $t_k \mapsto t_{k \bmod e_i} $ and $s_j \mapsto s_j$. Then
$$
\nu_{e_{i}}^{e_{i+1}} \circ \nu_{e_{i+1}}^{e_{i+2}} = \nu_{e_{i}}^{e_{i+2}}
\qquad
\mbox{ for all } i \geq 0. $$

\subsection{Braid groups of types ${\mathbf{A}}$, ${\mathbf{B}}$ and $\widetilde{{\mathbf{A}}}$ and geometric braids}
\label{ssec:GeomBr}

The group $B(1,1,r+1)$ for $r\ge 1$ is precisely the Artin braid group $B_{r+1}$
on $r+1$ strings---also known as the braid group of type ${\mathbf{A}}_r$,
denoted $B({\mathbf{A}}_r)$---and possesses the following presentation.
\begin{equation*}
B({\mathbf{A}}_r)=B_{r+1}=\left\langle \sigma_1,\ldots,\sigma_r \biggm|
\begin{array}{ll}
\sigma_i\sigma_j=\sigma_j\sigma_i & \mbox{for $|i-j| > 1$} \\
\sigma_i\sigma_{i+1}\sigma_i=\sigma_{i+1}\sigma_i\sigma_{i+1}
& \mbox{for $i=1,\ldots,r-1$}
\end{array}
\right\rangle
\end{equation*}

The group $B(2,1,r)$ (or indeed $B(d,1,r)$ for any $d \geq 2$) for $r\ge 2$ is usually
called the braid group of type ${\mathbf{B}}_r$, denoted $B({\mathbf{B}}_r)$,
and has the following presentation.
\begin{equation}\label{pres:B_r}
B({\mathbf{B}}_r)=\left< {b_1,\ldots,b_r \Biggm|
{\begin{array}{ll}
b_ib_j=b_jb_i & \mbox{for $|i-j| > 1$} \\
b_ib_{i+1}b_i=b_{i+1}b_ib_{i+1}
& \mbox{for $1< i< r$}\\
b_1b_2b_1b_2=b_2b_1b_2b_1
\end{array}}}
\right>
\end{equation}

See Figure~\ref{fig:typeAB} for the diagrams for the above presentations.
The double edge between $b_1$ and $b_2$ encodes
the relation $b_1b_2b_1b_2=b_2b_1b_2b_1$.
The braid group $B({\mathbf{B}}_r)$ is a subgroup of $B_{r+1}$
of index $r+1$ under the identification
$b_1=\sigma_1^2$ and $b_i=\sigma_i$ for $i=2,\ldots,r$.
See Figure~\ref{fig:braids}.

\begin{figure}
$$\begin{array}{cc}
\begin{xy}
(10,0) *++={\rule{0pt}{4pt}} *\frm{o};
(20,0) *++={\rule{0pt}{4pt}} *\frm{o}**@{-};
(30,0) *++={\rule{0pt}{4pt}} *\frm{o}**@{-};
(40, 0) *++={\dots}  **@{-};
(50, 0) *++={\rule{0pt}{4pt}} *\frm{o} **@{-};
(13,-3) *++={\sigma_1};
(23,-3) *++={\sigma_2};
(33,-3) *++={\sigma_3};
(53,-3) *++={\sigma_r}
\end{xy} &
\qquad
\begin{xy}
(10,0) *++={\rule{0pt}{4pt}} *\frm{o};
(20,0) *++={\rule{0pt}{4pt}} *\frm{o}**@{=};
(30, 0) *++={\rule{0pt}{4pt}} *\frm{o} **@{-};
(40, 0) *++={\dots}  **@{-};
(50, 0) *++={\rule{0pt}{4pt}} *\frm{o} **@{-};
(14,-3) *++={b_1};
(24,-3) *++={b_2};
(34,-3) *++={b_3};
(54,-3) *++={b_r}
\end{xy}\\[15pt]
\mbox{(a) Diagram for $B({\mathbf{A}}_r)$, $r\ge 1$} &
\qquad\mbox{(b) Diagram for $B({\mathbf{B}}_r)$, $r\ge 2$}
\end{array}
$$
\vskip-.5\baselineskip
\caption{Coxeter graphs of $B({\mathbf{A}}_r)$ and $B({\mathbf{B}}_r)$}
\label{fig:typeAB}
\end{figure}
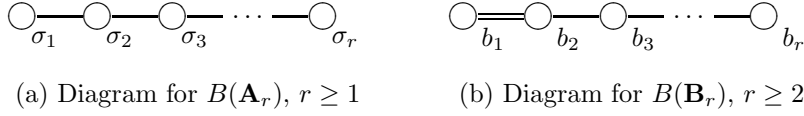

\begin{figure}
\begin{tabular}{*7c}
\includegraphics[scale=.8]{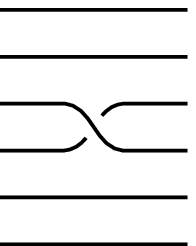}&&
\includegraphics[scale=.8]{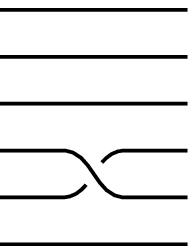}&&
\includegraphics[scale=.8]{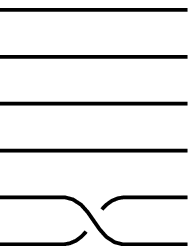}&&
\includegraphics[scale=.8]{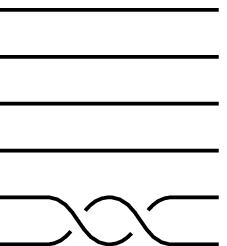}\\
(a) $\sigma_3$ &&
(b) $\sigma_2$ &&
(c) $\sigma_1$ &&
(d) $b_1=\sigma_1^2$
\end{tabular}
\caption{Braid pictures for $\sigma_3$, $\sigma_2$, $\sigma_1$ and $b_1=\sigma_1^2$ in $B_6$}
\label{fig:braids}
\end{figure}

The braid group of type ${\widetilde{\mathbf A}_{r-1}}
$ for $r\ge 3$, denoted $B({\widetilde{\mathbf A}_{r-1}}
)$, is usually described
by the Coxeter graph in Figure~\ref{fig:Atilde_Coxeter}.
This diagram defines a presentation
\begin{equation}\label{pres:A_til_r}
B(\widetilde{\mathbf{A}}_{r-1})=\left\langle  s_1,\ldots, s_r \biggm|
\begin{array}{ll}
s_i s_j=s_j s_i & \mbox{for $i-j \not\equiv \pm 1 \mod{r}$} \\
s_i s_{j}s_i=s_{j}s_i s_{j}
& \mbox{for $i-j \equiv \pm 1 \mod{r}$}
\end{array}
\right\rangle.
\end{equation}
On adding the relations $s_i^2 = 1$ for all $i$,
a presentation for the affine reflection group of type ${\widetilde{\mathbf A}_{r-1}}
$
is obtained, where the generators are (real affine) reflections.

\begin{figure}[t]
$$\begin{xy}
(60, 0) *++={\rule{0pt}{4pt}} *\frm{o};
(35,15) *++={\rule{0pt}{4pt}} *\frm{o} **@{-};
(10, 0) *++={\rule{0pt}{4pt}} *\frm{o} **@{-};
(20, 0) *++={\rule{0pt}{4pt}} *\frm{o} **@{-};
(30, 0) *++={\rule{0pt}{4pt}} *\frm{o} **@{-};
(40, 0) *++={\dots}  **@{-};
(50, 0) *++={\rule{0pt}{4pt}} *\frm{o} **@{-};
(60, 0) *++={\rule{0pt}{4pt}} *\frm{o} **@{-};
(13,-3) *++={s_2};
(23,-3) *++={s_3};
(33,-3) *++={s_4};
(55,-3) *++={s_{r-1}};
(63,-3) *++={s_r};
(35,11) *++={s_1}
\end{xy}$$
\vskip-1.2\baselineskip
\caption{Coxeter graph of $B({\widetilde{\mathbf A}_{r-1}}
)$, $r\ge 3$}
\label{fig:Atilde_Coxeter}
\end{figure}
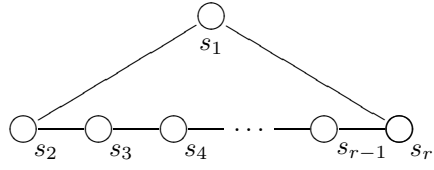

\begin{definition}
\begin{enumerate}
\item
A braid $g\in B_{r+1}$ is said to be \emph{$i$-pure} for $1\le i\le r+1$ if $\pi_g(i)=i$,
where $\pi_g$ denotes the induced permutation of $g$.
Let $B_{r+1,1}$ denote the subgroup of $B_{r+1}$
consisting of 1-pure braids, hence it is generated
by $\{\sigma_1^2,\sigma_2,\sigma_3,\ldots,\sigma_r\}$.

\item
Let $P$ be a subset of\/ $\{1,\ldots,r+1\}$. An $(r+1)$-braid $g$ is said to be
\emph{$P$-pure} if $g$ is $i$-pure for each $i\in P$.
It is said to be \emph{$P$-straight} if it is  $P$-pure and it becomes trivial
when we remove all the $i$-th strings for $i\not\in P$.
\item
The homomorphism $\operatorname{wd}:B_{r+1,1}\to {\mathbb Z}$ measures
the \emph{winding number} around the first string of the other strings.
In particular, $\operatorname{wd}(\sigma_1^2)=1$ and $\operatorname{wd}(\sigma_i)=0$ for all $2\le i\le r$.
\end{enumerate}
\end{definition}

\begin{figure}
$$
\includegraphics{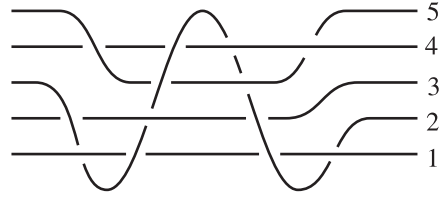}
$$
\caption{This braid is $\{1,4,5\}$-pure, $\{1,4\}$-straight and
$\{1,5\}$-straight, and has winding number $0$.}\label{fig:p-pure}
\end{figure}

For example, the braid in Figure~\ref{fig:p-pure}
is $\{1,4,5\}$-pure, $\{1,4\}$-straight and $\{1,5\}$-straight,
and has winding number 0.
Notice that if $|P|=1$, then a braid is $P$-pure if and only if
it is $P$-straight,
and that if $P=\{1,\ldots,r+1\}$, then $P$-pure braids are
nothing more than pure braids in the usual sense
and the identity braid is the only $P$-straight braid.

\smallskip

It is well known that the braid group $B({\mathbf{B}}_r)$ is isomorphic to
the group of $r$-braids on an annulus and the braid group $B(\widetilde{\mathbf{A}}_{r-1})$
is isomorphic to the subgroup consisting of such braids
with zero winding number~\cite{Cri99, All02, CC05, BM07}.
Equivalently, $B({\mathbf{B}}_r)$ is isomorphic to the group
of 1-pure $(r+1)$-braids on a disk
and $B(\widetilde{\mathbf{A}}_{r-1})$ is isomorphic to the subgroup
consisting of such braids
with zero winding number around the first string.
In this paper, we use the latter isomorphisms
\begin{align*}
B({\mathbf{B}}_r) &\cong  B_{r+1,1},  \\
B(\widetilde {\mathbf{A}}_{r-1}) &\cong
\{\, g\in B_{r+1,1}\mid \operatorname{wd}(g)=0 \, \}.
\end{align*}

The embedding of $B({\widetilde{\mathbf A}_{r-1}}
)$ into $B({\mathbf{B}}_r)$ can be made explicit
in the following way.
Consider a regular $r$-gon in the interior of a disk whose edges are
labeled $E_1,\ldots,E_r$ in clockwise order as in Figure~\ref{fig:br-pic-1}(a).
Suppose that one puncture is at the center and $r$ punctures
are on the vertices of the $r$-gon.
As mapping classes, the generator $s_i$ of $B(\widetilde{{\mathbf{A}}}_{r-1})$ is represented
by a positive half Dehn twist along $E_i$.
The configuration in Figure~\ref{fig:br-pic-1}(a) is equivalent
to that in Figure~\ref{fig:br-pic-1}(b),
from which the generators $s_i$ are expressed as words in $\sigma_j$'s as follows.
\begin{align*}
s_1&=(\sigma_r\cdots \sigma_3)
    (\sigma_1^{-2}\sigma_2\sigma_1^{2})
    (\sigma_3^{-1}\cdots \sigma_r^{-1}), \\
s_j&=\sigma_j\quad\text{for } 2\le j\le r.
\end{align*}
See Figure~\ref{fig:br-pic-1}(c) for the braid picture of the generator $s_1$.

\begin{figure}
$$\small\begin{array}{ccc}
\begin{array}[b]{c}{\begin{xy}
(0,0) *{\ellipse(18,18){}};
(0,18)*{}; (0,-18)*{}; (18,0)*{}; (-18,0)*{};
(0,0) *{\bullet};
(0, 12);
(10, 6) *{\bullet}**@{-}?(0.5) *!/_3mm/{E_1};
(10, 6);
(10,-6) *{\bullet}**@{-}?(0.5) *!/_3mm/{E_2};
(10,-6);
(0, -12) *{\bullet}**@{-}?(0.5) *!/_3mm/{E_3};
(0, -12);
(-10,-6) *{\bullet}**@{-}?(0.5) *!/_3mm/{E_4};
(-10,-6);
(-10, 6) *{\bullet}**@{-}?(0.5) *!/_3mm/{E_5};
(-10, 6);
(0, 12) *{\bullet}**@{-} ?(0.5) *!/_3mm/{E_6};
\end{xy}}\\[1mm]
\mbox{(a)\rule{0pt}{15pt}}
\end{array}
&
\begin{array}[b]{c}
{\begin{xy}
(0,0) *{\ellipse(18,18){}};
(0,18)*{}; (0,-18)*{}; (18,0)*{}; (-18,0)*{};
(0,-12) *{\bullet};
(0,12) *{\bullet};
(0,8) *{\bullet}**@{-}?(0.5) *!/^3mm/{E_6};
(0,8);
(0,4) *{\bullet}**@{-}?(0.5) *!/^3mm/{E_5};
(0,4);
(0,0) *{\bullet}**@{-}?(0.5) *!/^3mm/{E_4};
(0,0);
(0,-4) *{\bullet}**@{-}?(0.5) *!/^3mm/{E_3};
(0,-4);
(0,-8) *{\bullet}**@{-}?(0.5) *!/^3mm/{E_2};
(0,-8); (0,12) **\crv{(-8,-8) & (-8,-16) &  (8,-16) & (8,-8)
  & (8,8) & (8,12)}?(0.6) *!/^3mm/{E_1};
\end{xy}}\\
\mbox{(b)\rule{0pt}{15pt}}
\end{array}
&
\begin{array}[b]{c}
\includegraphics[scale=.9]{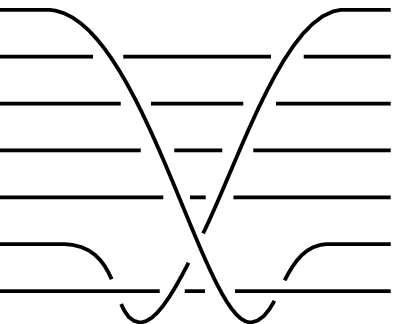}\\[4mm]
\mbox{(c)}
\end{array}
\end{array}
$$
\caption{(a) and (b) are equivalent configurations of punctures and arcs.
As mapping classes, the generator $s_i$ of $B({\widetilde{\mathbf{A}}}_{r-1})$ is represented by
a positive half Dehn twist along $E_i$.
(c) shows a braid picture for the generator $s_1$, which is a 1-pure braid with
winding number zero.
}
\label{fig:br-pic-1}
\end{figure}

In the same way that the free group on two generators
can be considered as the braid group
of a dihedral group $\mathbf{I}_2(\infty)$, the parameter $e$
of the braid group $B(e,e,r)$
can be set to infinity to obtain a group $B(\infty,\infty,r$).
This turns out to be isomorphic to the braid group of type
${\widetilde{\mathbf A}_{r-1}}
$---see the following section and \S\ref{sec:Atilde}
for an extended discussion.

\section{New presentations for the braid groups $B(\infty,\infty,r)$ and $B(de,e,r)$}
\label{sec:Binfty}

By suppressing the relation $(P_1)$ in the Brou\'e-Malle-Rouquier
presentation of  $B(e,e,r)$,
we obtain a group which we will denote by $B(\infty,\infty,r)$.
In other words, $B(\infty,\infty,r)$, $r\ge 2$, has the following presentation
(see Figure~\ref{fig:Binfty}(a) for the diagram for this presentation):

\begin{itemize}
\item Generators: $\{t_0,t_1\} \cup S$
    where $S = \{s_3,\ldots,s_r\}$;
\item Relations: the usual braid relations on $S$, along with\\
$\begin{array}{ll}
(P_2) & s_3 t_i s_3 =t_i s_3 t_i \quad \mbox{for $i = 0,1$}, \\
(P_3) & s_j t_i = t_i s_j \quad \mbox{for $i = 0,1$ and $4\leq j\leq r$},\\
(P_4) & s_3(t_1 t_0)s_3(t_1 t_0) = (t_1 t_0)s_3(t_1 t_0)s_3.
\end{array} $
\end{itemize}

This group was first considered by Shi~\cite{Shi02} as
a generic version of the groups $B(e,e,r)$.
He observed that on adding the relations $a^2=1$
for all the generators $a$, a presentation
was obtained for a group denoted $G(\infty,\infty,r)$.
Denote by $\overline{\phantom{x}}$ the natural map
$B(\infty,\infty,r)\twoheadrightarrow G(\infty,\infty,r)$.
The generating reflections in the presentation of
$G(\infty, \infty, r)$ are the $r\times r$ matrices
$$
\overline{t_i} =\left(\begin{array}{c|c}
\begin{array}{rl}
0 &  x^{-i} \\
x^i &  0 \\
\end{array}
& 0\\
\hline
\\[-1em]
0  & ~ {\LARGE I_{r-2}} ~ \\[-1em]
\\
\end{array} \right)
\quad \mbox{and}\quad \overline{s_j}=\mbox{permutation matrix of $(j-1\ \  j)$},$$
where $x$ is a transcendental number.
The matrices are complex reflections of order 2.

\medskip

\begin{figure}
$$\begin{array}{c}
\xy
(10,8)  *++={\rule{0pt}{4pt}} *\frm{o};
(10,-4) *++={\rule{0pt}{4pt}} *\frm{o} **@{-} ?(0.5) *!/^2.5mm/{\infty};
(10,2)  *++={} ;
(20,2)  *++={\rule{0pt}{4pt}} *\frm{o} **@{=};
(10,8)  *++={\rule{0pt}{4pt}} *\frm{o} ;
(20,2)  *++={\rule{0pt}{4pt}} *\frm{o} **@{-};
(10,-4) *++={\rule{0pt}{4pt}} *\frm{o} ;
(20,2)  *++={\rule{0pt}{4pt}} *\frm{o} **@{-};
(30,2)  *++={\rule{0pt}{4pt}} *\frm{o} **@{-};
(40,2)  *++={\rule{0pt}{4pt}} *\frm{o} **@{-};
(50,2)  *++={\dots} **@{-} ;
(60,2)  *++={\rule{0pt}{4pt}} *\frm{o} **@{-};
(6,8)   *++={t_1} ;
(6,-4)  *++={t_0} ;
(23,-1) *++={s_3};
(33,-1) *++={s_4};
(43,-1) *++={s_5};
(63,-1) *++={s_r}
\endxy\\
\mbox{(a) Diagram for Shi's presentation of $B(\infty,\infty,r)$}\\[2mm]
\begin{xy}
(2.2,-25) *++={\vdots};
(2.2, 25) *++={\vdots} **@{-};
(4, 16) *++={\rule{0pt}{4pt}} *\frm{o};
    (20,0) *++={\rule{0pt}{4pt}} *\frm{o} **@{-};
(4,-16) *++={\rule{0pt}{4pt}} *\frm{o};
    (20,0) *++={\rule{0pt}{4pt}} *\frm{o} **@{-};
(4, 8) *++={\rule{0pt}{4pt}} *\frm{o};
    (20,0) *++={\rule{0pt}{4pt}} *\frm{o} **@{-};
(4,-8) *++={\rule{0pt}{4pt}} *\frm{o};
    (20,0) *++={\rule{0pt}{4pt}} *\frm{o} **@{-};
(4, 0) *++={\rule{0pt}{4pt}} *\frm{o};
    (20,0) *++={\rule{0pt}{4pt}} *\frm{o} **@{-};
(30, 0) *++={\rule{0pt}{4pt}} *\frm{o} **@{-};
(40, 0) *++={\rule{0pt}{4pt}} *\frm{o} **@{-};
(50, 0) *++={\dots}  **@{-};
(60, 0) *++={\rule{0pt}{4pt}} *\frm{o} **@{-};
(0,-18) *++={2};
(6,12) *++={t_2};
(6, 4) *++={t_1};
(6,-4) *++={t_0};
(7,-12)*++={t_{-1}};
(7,-20)*++={t_{-2}};
(23,-3) *++={s_3};
(33,-3) *++={s_4};
(43,-3) *++={s_5};
(63,-3) *++={s_r};
\end{xy}\\
\mbox{(b) Diagram for the new presentation of $B(\infty,\infty,r)$}
\end{array}
$$
\caption{Diagrams for $B(\infty,\infty,r)$}
\label{fig:Binfty}
\end{figure}

\subsection{New presentation of $B(\infty,\infty,r)$}

For the braid group $B(\infty,\infty,r)$, we introduce a new presentation,
analogous to the Corran-Picantin presentation of $B(e,e,r)$.

\begin{theorem}\label{binftyrpresthm}
The braid group $B(\infty,\infty,r)$ for $r\ge 2$ has the following presentation:
\begin{itemize}
\item Generators: $T\cup S$ where $T = \{t_i\mid i \in {\mathbb Z} \}$
    and $S = \{s_3,\ldots,s_r\}$;
\item Relations: the usual braid relations on $S$, along with\\
$\begin{array}{ll}
(Q_1) & t_i t_{i-1} = t_j t_{j-1}
    \quad \mbox{for $i, j \in {\mathbb Z}$},\\
(Q_2) & s_3 t_i s_3 =t_i s_3 t_i \quad\mbox{for $i \in {\mathbb Z}$},\\
(Q_3) & s_j t_i  =t_i s_j  \quad \mbox{for $i\in{\mathbb Z}$ and $4\leq j \leq r$}.
\end{array}$
\end{itemize}
Furthermore, adding the relations $a^2=1$ for all generators $a$
gives a presentation of
the group $G(\infty,\infty,r)$, where the generators are all reflections.
\end{theorem}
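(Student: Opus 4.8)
The plan is to derive the new presentation from Shi's presentation of $B(\infty,\infty,r)$ by a sequence of Tietze transformations, keeping careful track of which relations are exchanged. Write $\delta:=t_1t_0$. Starting from the generators $\{t_0,t_1\}\cup S$ with relations $(P_2),(P_3),(P_4)$ and the braid relations on $S$, I would first adjoin the redundant generators $t_i$ for $i\in{\mathbb Z}\setminus\{0,1\}$, one at a time outward from the pair $\{0,1\}$, with defining relations $t_{i+1}t_i=\delta$. These defining relations are exactly $(Q_1)$, and each new $t_i$ becomes a word in $t_0,t_1$ (for instance $t_2=t_1t_0t_1^{-1}$ and $t_{-1}=t_0^{-1}t_1t_0$). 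Several matches are then immediate: the braid relations on $S$ are common to both presentations; $(P_3)$ is the case $i=0,1$ of $(Q_3)$, and conversely $(Q_3)$ for all $i$ follows from $(P_3)$ since each $s_j$ ($4\le j\le r$) commutes with $t_0,t_1$ and hence with every word in them; and $(P_2)$ is precisely the case $i=0,1$ of $(Q_2)$.

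The heart of the argument is to show that, modulo $(Q_1)$ and the relations already matched, the single relation $(P_4)$ is equivalent to the whole family $(Q_2)$, i.e.\ to $s_3t_is_3=t_is_3t_i$ for every $i\in{\mathbb Z}$. For the direction $(Q_1)+(Q_2)\Rightarrow(P_4)$ I would rewrite $s_3\delta s_3\delta=s_3t_2t_1s_3t_1t_0$ (using $t_2t_1=\delta$) and reduce it to $\delta s_3\delta s_3$ by a short chain of substitutions applying the braid relation $(Q_2)_i$ for $i=1$, then $i=2$, then $i=0$, then $i=1$. The reverse direction supplies the crucial inductive step. Assuming $(Q_2)_i$, $(Q_2)_{i+1}$ and $(P_4)$, one transforms the two sides of $(P_4)$ by the already-available substitutions: using $(Q_2)_{i+1}$ the left side $s_3\delta s_3\delta$ reduces to $s_3t_{i+2}s_3\cdot t_{i+1}s_3t_i$, while using $(Q_2)_{i+1}$ and $(Q_2)_i$ the right side $\delta s_3\delta s_3$ reduces to $t_{i+2}s_3t_{i+2}\cdot t_{i+1}s_3t_i$. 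Since $(P_4)$ forces these to be equal, cancelling the common right factor $t_{i+1}s_3t_i$ gives exactly $(Q_2)_{i+2}$; an analogous chain yields the downward step $(Q_2)_{i-1}$ from $(Q_2)_i$ and $(Q_2)_{i+1}$. Starting from the seeds $(Q_2)_0$ and $(Q_2)_1$ (which are $(P_2)$), this propagates $(Q_2)$ to all indices. What makes the induction uniform is that, by $(Q_1)$, one has $t_{i+1}t_i=\delta$ for every $i$, so the relation invoked at each step is always the single relation $(P_4)$ rather than an index-dependent family.

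The main obstacle is precisely this propagation: one must check that $(P_4)$ is neither too weak nor too strong, that it encodes exactly the ``second-order'' information needed to advance the braid relation by one index, and that the two reduced words share a genuinely cancellable common factor. Once the equivalence is in hand, the Tietze transformation is complete: in the enlarged presentation one adjoins the whole family $(Q_2)$ (justified by the propagation) and then discards $(P_4)$ (justified by the direction $(Q_1)+(Q_2)\Rightarrow(P_4)$), and since $t_0,t_1$ are no longer distinguished one arrives at the asserted presentation on $T\cup S$ with relations $(Q_1),(Q_2),(Q_3)$ and the braid relations on $S$.

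For the final sentence I would add the relations $a^2=1$ for all generators to both presentations simultaneously. The equivalence above is compatible with this: the new presentation imposes $t_i^2=1$ for every $i$, whereas Shi's imposes only $t_0^2=t_1^2=1$, but the former follow from the latter through the recursion (for example $t_2^2=t_1t_0^2t_1^{-1}=1$), so the two quotients coincide. This common quotient is Shi's group $G(\infty,\infty,r)$; under the identification the generators $t_i$ map to the stated monomial matrices $\overline{t_i}$ (which satisfy $\overline{t_{i+1}}\,\overline{t_i}=\operatorname{diag}(x^{-1},x,1,\dots,1)$ independently of $i$, matching $(Q_1)$) and the $s_j$ to the transposition matrices, all of which are complex reflections of order $2$, as required.
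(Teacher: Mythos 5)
Your proposal is correct and follows essentially the same route as the paper: adjoin the generators $t_i$ with $(Q_1)$ as their defining relations, observe that $(P_3)\Leftrightarrow(Q_3)$ since every $t_i$ is a word in $t_0,t_1$, and prove $(P_2)+(P_4)\Leftrightarrow(Q_2)$ by the same two-index induction, cancelling the common right factor $t_{i+1}s_3t_i$. The treatment of the reflection-group quotient also matches the paper's.
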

The proof of the above theorem is the same as that for $B(e,e,r)$ in \cite{CP11}.
However, we give the proof in \S\ref{ssec:Pf_Binfty} for completeness.

\subsubsection{Diagram of type $(\infty,\infty,r)$}

In the obvious generalization of the type $(e,e,r)$ case,
we propose the diagram shown in Figure~\ref{fig:Binfty}(b)
as a type $(\infty,\infty,r)$ diagram.
Notice that the diagram for the new presentation is obtained from
Shi's diagram by changing
{\Small\begin{xy}
(0,2) *++={\phantom{.}} *\frm{o};
(10,2) *++={\phantom{.}} *\frm{o}**@{-}  ?(0.5) *!/_2mm/{\infty};
(0,-2) *++={t_0};
(10,-2) *++={t_1}
\end{xy}}
to the dual diagram for $F_2$.

Let $\tau$ denote the graph automorphism
$$
t_i\mapsto t_{i+1}\quad \mbox{for $i\in{\mathbb Z}$},\qquad
s_j\mapsto s_j\quad \mbox{for $3\le j\le r$}.
$$
This gives rise to automorphisms of $B(\infty,\infty,r)$ and $G(\infty,\infty,r)$
which send (braid) reflections to (braid) reflections.

\subsubsection{Maps between $B(e,e,r)$ and $B(\infty,\infty,r)$}

Let $\nu_e:B(\infty,\infty,r) \to B(e,e,r)$ be the epimorphism
which sends $t_i$ to $t_{i \bmod e}$ for $i\in{\mathbb Z}$ and
sends $s_j$ to $s_j$ for $3 \leq j \leq r$.
Once again, consider a sequence of natural numbers $e_i$ such that
$e_0 = 1$ and
$e_i$ divides $e_{i+1}$ for each $i\ge 0$.
Then
$$
\nu_{e_{i}}^{e_{i+1}} \circ \nu_{e_{i+1}}^{\phantom{e_i}}
= \nu_{e_{i}}^{\phantom{e_i}}
\quad\mbox{and}\quad
\nu_{e_{i}}^{e_{i+1}} \circ \nu_{e_{i+1}}^{e_{i+2}} = \nu_{e_{i}}^{e_{i+2}}
\qquad
\mbox{for all $i \geq 0$}.
$$
The inverse limit of the sequence
$$ \xymatrix{
\cdots \ar @{->>}[r]&
B(e_{i+1},e_{i+1},r) \ar @{->>}[r]^{\quad\nu_{e_i}^{e_{i+1}}}
& B(e_i,e_i,r) \ar @{->>}[r]^{{}\qquad\nu_{e_{i-1}}^{e_i}\quad}
& \cdots \ar @{->>}[r]^{\nu_{e_1}^{e_2}\quad}
& B(e_1,e_1,r) \ar @{->>}[r]^{\nu_1^{e_1}}
&B(1,1,r)
} $$
is however \emph{not} $B(\infty,\infty,r)$, but rather a group we
denote by $B(\hat{\mathbb Z},\hat{\mathbb Z},r)$,
where $\hat{{\mathbb Z}}$ is the profinite completion of $\mathbb Z$~\cite{Shi02}.

\subsubsection{Proof of Theorem~\ref{binftyrpresthm}}
\label{ssec:Pf_Binfty}

For completeness, we include a proof of Theorem~\ref{binftyrpresthm}.
Add new generators $\{t_i\mid i\in{\mathbb Z}\setminus\{0,1\}\}$ to Shi's presentation
which are defined inductively by
$$t_i=
\begin{cases}
t_{i-1}t_{i-2}t_{i-1}^{-1} & \mbox{for $i\ge  2$},\\
t_{i+1}^{-1}t_{i+2}t_{i+1} & \mbox{for $i\le -1$}.
\end{cases}
$$
The above relations are the same as
$t_it_{i-1}=t_{i-1}t_{i-2}$ for all $i\in{\mathbb Z}$,
which is the same as the relation $(Q_1)$.
Therefore $B(\infty,\infty,r)$ has the following presentation:
\begin{itemize}
\item Generators: $T\cup S$ where $T = \{t_i\mid i \in {\mathbb Z} \}$
    and $S = \{s_3,\ldots,s_r\}$;
\item Relations: the usual braid relations on $S$, along with\\
$\begin{array}{ll}
(Q_1) & t_i t_{i-1} = t_j t_{j-1}
    \quad \mbox{for $i, j \in {\mathbb Z}$},\\
(P_2) & s_3 t_i s_3 =t_i s_3 t_i \quad \mbox{for $i = 0,1$}, \\
(P_3) & s_j t_i = t_i s_j \quad \mbox{for $i = 0,1$ and $4\leq j\leq r$},\\
(P_4) & s_3(t_1 t_0)s_3(t_1 t_0) = (t_1 t_0)s_3(t_1 t_0)s_3.
\end{array}$
\end{itemize}

\begin{claim}{Claim 1}
Assuming $(Q_1)$, the relation $(Q_3)$ is equivalent to $(P_3)$.
\end{claim}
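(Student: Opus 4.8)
The plan is to prove the two implications separately, noting that one direction is immediate while the other relies essentially on $(Q_1)$ to express every generator $t_i$ in terms of $t_0$ and $t_1$.

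First I would observe that $(Q_3)\Rightarrow(P_3)$ requires no work: the relations $(P_3)$ are exactly the special case $i\in\{0,1\}$ of $(Q_3)$, so they hold whenever $(Q_3)$ does. The content of the claim is therefore the reverse implication $(P_3)\Rightarrow(Q_3)$ in the presence of $(Q_1)$.

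For that direction, the key step is to show that $(Q_1)$ forces each $t_i$ to lie in the subgroup generated by $t_0$ and $t_1$. Indeed, taking $j=i-1$ in $(Q_1)$ gives $t_i t_{i-1}=t_{i-1}t_{i-2}$, equivalently $t_i=t_{i-1}t_{i-2}t_{i-1}^{-1}$ for $i\ge 2$; shifting indices (taking the relation with indices $i+2$ and $i+1$) yields $t_{i+2}t_{i+1}=t_{i+1}t_i$, that is $t_i=t_{i+1}^{-1}t_{i+2}t_{i+1}$ for $i\le -1$. These are precisely the inductive definitions recorded just before the claim, so an induction on $|i|$ — upward from the base cases $t_0,t_1$ for positive indices and downward for negative ones — expresses every $t_i$ as a word in $t_0,t_1$ and their inverses.

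With this in hand the conclusion is immediate: $(P_3)$ asserts that $s_j$ commutes with $t_0$ and $t_1$ for $4\le j\le r$, hence $s_j$ commutes with $t_0^{-1}$ and $t_1^{-1}$ as well, and therefore with any word in $t_0^{\pm 1},t_1^{\pm 1}$ — in particular with each $t_i$. This is exactly $(Q_3)$. I do not expect any serious obstacle here; the only point demanding a little care is keeping the two inductions (for $i\ge 2$ and $i\le -1$) straight and checking that the index-shifted forms of $(Q_1)$ match the inductive definitions of the $t_i$.
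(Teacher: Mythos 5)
Your proposal is correct and follows essentially the same route as the paper: one direction is a specialization, and for the other you use $(Q_1)$ to write every $t_i$ as a word in $t_0^{\pm1},t_1^{\pm1}$ so that commutation with $t_0,t_1$ propagates to all $t_i$. The only cosmetic difference is that the paper records the closed form $t_{2m+k}=(t_1t_0)^m t_k (t_1t_0)^{-m}$ rather than running the two inductions explicitly.
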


\begin{proof}[Proof of Claim 1]
$(P_3)$ is a special case of $(Q_3)$.
Assuming $(Q_1)$, every $t_i$ is represented by a word in $\{t_0, t_1\}$:
for $m\in{\mathbb Z}$ and $k=0,1$,
$$
t_{2m+k}=(t_1t_0)^m t_k (t_1t_0)^{-m}.
$$
If we assume $(P_3)$, then $s_j$ ($4\le j\le r$) commutes with $t_0$ and $t_1$,
hence $s_j$ commutes with $t_i$ for any $i\in {\mathbb Z}$, which is the relation $(Q_3)$.
Therefore $(P_3)$ implies $(Q_3)$.
\end{proof}

\begin{claim}{Claim 2}
Assuming $(Q_1)$, the relation $(Q_2)$ is equivalent to $(P_2)+(P_4)$.
\end{claim}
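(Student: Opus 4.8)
The plan is to reduce everything to the single braid relation $(P_4)$ between $s_3$ and the fixed element $\delta := t_1 t_0$. First I would discard $s_4,\dots,s_r$, which occur in none of $(Q_2)$, $(P_2)$, $(P_4)$, and work inside the subgroup generated by $s_3$ and the $t_i$. As in the proof of Claim~1, I would record the consequences of $(Q_1)$: the product $t_i t_{i-1}$ equals $\delta$ for every $i$, so $t_{i+2} = \delta t_i \delta^{-1} = t_{i+1} t_i t_{i+1}^{-1}$, and conjugation by $\delta$ realizes the shift $t_i \mapsto t_{i+2}$. I would also rewrite $(P_4)$ as $(s_3\delta)^2 = (\delta s_3)^2$; since this is a statement about the two elements $s_3$ and $\delta$ only, and $\delta = t_{i+1} t_i$ for all $i$ by $(Q_1)$, the relation $(P_4)$ is automatically independent of the index used to write $\delta$.

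The heart of the argument is a shift-invariant core lemma: assuming $(Q_1)$ together with the instances of $(Q_2)$ at indices $i$ and $i+1$, the instance of $(Q_2)$ at index $i+2$ is equivalent to $(P_4)$ (and, by the mirror computation, so is the instance at index $i-1$). Granting this, both implications of Claim~2 follow at once. For $(Q_2)\Rightarrow(P_2)+(P_4)$: the relation $(P_2)$ is literally the special case $i=0,1$ of $(Q_2)$, while $(P_4)$ comes from the core lemma applied at $i=0$, whose hypotheses ($(Q_2)$ at $0,1$) and conclusion ($(Q_2)$ at $2$) are all available. For $(P_2)+(P_4)\Rightarrow(Q_2)$ I would induct on $|i|$: the base cases $i=0,1$ are exactly $(P_2)$; given $(Q_2)$ at $i$ and $i+1$, the core lemma (whose $(P_4)$-hypothesis holds for every index, by the shift-independence noted above) yields $(Q_2)$ at $i+2$, and the mirror version yields $(Q_2)$ at $i-1$, so all integer indices are reached.

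It remains to prove the core lemma, which is where the real work lies. Working at $i=0$ for concreteness, I would first extract from the index-$0$ and index-$1$ instances of $(Q_2)$ the rewriting rules $s_3^{-1} t_0 s_3 = t_0 s_3 t_0^{-1}$ and $t_1^{-1} s_3 t_1 = s_3 t_1 s_3^{-1}$. Substituting $t_2 = t_1 t_0 t_1^{-1}$ into $(Q_2)$ at index $2$ and repeatedly applying these two rules together with $(Q_1)$, I expect the relation to collapse to $(P_4)$ in the form $(s_3\delta)^2 = (\delta s_3)^2$; running the same substitutions in reverse gives the converse, hence the claimed equivalence. The main obstacle is precisely this transfer step. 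One is tempted simply to conjugate $(Q_2)$ at index $i$ by $\delta$ to obtain $(Q_2)$ at index $i+2$, since $\delta t_i \delta^{-1} = t_{i+2}$; but conjugation by $\delta$ sends $s_3$ to $\delta s_3 \delta^{-1} \ne s_3$, so this only produces a braid relation between $\delta s_3 \delta^{-1}$ and $t_{i+2}$. The content of $(P_4)$ — equivalently $s_3 \delta s_3 \delta^{-1} = \delta^{-1} s_3 \delta s_3$ — is exactly the correction that lets one replace $\delta s_3 \delta^{-1}$ by $s_3$, and carrying out this replacement cleanly while tracking the interaction with $t_{i+1}$ hidden inside $\delta$ is the delicate part of the computation.
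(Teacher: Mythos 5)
Your proposal follows essentially the same route as the paper: $(P_2)$ is the special case $i=0,1$, and $(Q_2)$ propagates to all integer indices by a two-step induction whose engine is exactly your ``core lemma.'' The paper proves its two directions as separate word computations: it derives $(P_4)$ from $(Q_1)$ and the instances $(Q_2)_0,(Q_2)_1,(Q_2)_2$, and conversely derives $(Q_2)_{k+2}$ from $(Q_2)_k,(Q_2)_{k+1},(P_4)$ by showing $s_3t_{k+2}s_3\cdot(t_{k+1}s_3t_k)=t_{k+2}s_3t_{k+2}\cdot(t_{k+1}s_3t_k)$ and cancelling on the right (cancellation is legitimate here since one is working in the group, not the monoid). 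The one caveat is that you leave the core lemma itself at the level of ``I expect the relation to collapse'': since that reduction is the entire mathematical content of the claim, it must actually be carried out. It does go through exactly as you describe --- substituting $t_2=t_1t_0t_1^{-1}$ into $(Q_2)_2$ and right-multiplying by $t_1$ gives $s_3t_1t_0(t_1^{-1}s_3t_1)=t_1t_0(t_1^{-1}s_3t_1)t_0$; replacing $t_1^{-1}s_3t_1$ by $s_3t_1s_3^{-1}$ via $(Q_2)_1$ and then $s_3^{-1}t_0$ by $t_0s_3(s_3t_0)^{-1}$ via $(Q_2)_0$ makes both sides end in the common factor $(s_3t_0)^{-1}$, and cancelling it leaves precisely $(P_4)$, with every step reversible --- so the gap is one of execution rather than of ideas.
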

\begin{proof}[Proof of Claim 2]
Suppose that $(Q_2)$ holds, that is, $s_3 t_i s_3 =t_i s_3 t_i$ for all $i \in {\mathbb Z}$.
Notice that $(P_2)$ is a special case of $(Q_2)$.
The following formula shows that $(P_4)$ also holds,
where relations are applied to the underlined subwords.
\begin{align*}
s_3\underline{t_1t_0}s_3t_1t_0
 &\stackrel{Q_1}{=}s_3t_2\underline{t_1s_3t_1}t_0
 \stackrel{Q_2}{=} \underline{s_3t_2s_3}t_1s_3t_0
 \stackrel{Q_2}{=} t_2s_3\underline{t_2t_1}s_3t_0\\
&\stackrel{Q_1}{=} t_2s_3t_1\underline{t_0s_3t_0}
 \stackrel{Q_2}{=} t_2\underline{s_3t_1s_3}t_0s_3
 \stackrel{Q_2}{=}\underline{t_2t_1}s_3t_1t_0s_3
 \stackrel{Q_1}{=} t_1t_0s_3t_1t_0s_3.
\end{align*}

Conversely, suppose that both $(P_2)$ and $(P_4)$ hold.
By $(P_2)$, we know that $(Q_2)$ holds for $i=0,1$.
Assume that $(Q_2)$ holds for $i=k,k+1$, which we denote by
$(Q_{2,k})$ and $(Q_{2,k+1})$, respectively. Since
\begin{align*}
s_3t_{k+2} \underline{s_3t_{k+1}s_3} t_k
&\stackrel{Q_{2,k+1}}{=} s_3 \underline{t_{k+2}t_{k+1}} s_3 \underline{t_{k+1}t_k}
 \stackrel{Q_1}{=} s_3t_1t_0s_3t_1t_0
 \stackrel{P_4}{=} \underline{t_1t_0} s_3 \underline{t_1t_0} s_3 \\
& \stackrel{Q_1}{=} t_{k+2} \underline{t_{k+1}s_3t_{k+1}} t_ks_3
 \stackrel{Q_{2,k+1}}{=} t_{k+2}s_3t_{k+1} \underline{s_3t_ks_3}\\
&\stackrel{Q_{2,k}}{=}   t_{k+2}s_3 \underline{t_{k+1}t_k} s_3t_k
 \stackrel{Q_1}{=}       t_{k+2}s_3t_{k+2}t_{k+1}s_3t_k,
\end{align*}
we have $s_3t_{k+2} s_3=t_{k+2}s_3t_{k+2}$ by canceling
$t_{k+1}s_3t_k$ from the right.
Hence $(Q_2)$ holds for $i=k+2$.
Similarly, we can show that $(Q_2)$ holds also for $i=k-1$.
By induction, we conclude that $(Q_2)$ holds for all $i\in{\mathbb Z}$.
\end{proof}

From Claims 1 and 2, the new presentation for $B(\infty,\infty,r)$ is correct.

The new presentation has the same generators as the original,
as well as some conjugates of the originals.
Since it is the case for Shi's presentation,
adding the relations $a^2=1$ for all generators $a$ in the
new presentation gives a presentation
of the reflection group $G(\infty,\infty,r)$.
Since conjugates of reflections are reflections,
the generators of this presentation are all reflections.

\subsection{New presentation of $B(de,e,r)$}
\label{sec:Bdeer_pres_infty}

Brou\'e, Malle and Rouquier~\cite{BMR98} introduced the following
presentation of $B(de,e,r)$:

\begin{itemize}
\item Generators: $\{z \} \cup \{t_0,t_1\} \cup S$ where $S=\{s_j\mid 3\le j\le r\}$;
\item Relations: the usual braid relations on $S$, along with\\
$\begin{array}{lll}
(R_1) & z t_1 t_0 = t_1 t_0 z, \\
(R_2) & z \langle t_1 t_0\rangle^e = t_0 z \langle t_1 t_0\rangle^{e-1}, \\
(R_3) & z s_j =s_j z \quad \mbox{for $3 \leq j \leq r$},  \\
(R_4) & s_3 t_i s_3 =t_i s_3 t_i \quad\mbox{for $i = 0,1$},\\
(R_5) & s_3(t_1 t_0)s_3(t_1 t_0) = (t_1 t_0)s_3(t_1 t_0)s_3,\\
(R_6) & s_jt_i=t_is_j \quad\mbox{for $i=0,1$ and $4\le j\le r$}.
\end{array}$
\end{itemize}

Furthermore, a presentation for the reflection group $G(de, e, r)$ is obtained
by adding the relations $z^d=1$, $t_0^2=t_1^2=1$ and $s_j^2=1$ for $3\le j\le r$,
and the generators are then all reflections.

This presentation is usually illustrated by the diagram in
Figure~\ref{fig:Bdeer}(a) (note the similarity to the diagram
for $B(e,e,r)$ in Figure~\ref{fig:Beer}(a)).
Adding the relation $z=1$ to the above presentation gives
the BMR presentation for $B(e,e,r)$, thus defining an epimorphism
from $B(de,e,r)$ to $B(e,e,r)$.

\begin{figure}
$$\begin{array}{c}
{\begin{xy}
(10,9.9); (10,-6) **\crv{(3,10.5) & (-4,2) & (3,-6.5)} ?(0.65) *!/^5mm/{e+1};
(-2.5,2) *++={\rule{0pt}{4pt}} *\frm{o};
(-6,2)   *++={z} ;
(10,8)  *++={\rule{0pt}{4pt}} *\frm{o};
(10,-4) *++={\rule{0pt}{4pt}} *\frm{o};
(10,2)  *++={} ;
(20,2)  *++={\rule{0pt}{4pt}} *\frm{o} **@{=};
(10,8)  *++={\rule{0pt}{4pt}} *\frm{o} ;
(20,2)  *++={\rule{0pt}{4pt}} *\frm{o} **@{-};
(10,-4) *++={\rule{0pt}{4pt}} *\frm{o} ;
(20,2)  *++={\rule{0pt}{4pt}} *\frm{o} **@{-};
(30,2)  *++={\rule{0pt}{4pt}} *\frm{o} **@{-};
(40,2)  *++={\rule{0pt}{4pt}} *\frm{o} **@{-};
(50,2)  *++={\dots} **@{-} ;
(60,2)  *++={\rule{0pt}{4pt}} *\frm{o} **@{-};
(13,11)   *++={t_1} ;
(13,-7)  *++={t_0} ;
(23,-1) *++={s_3};
(33,-1) *++={s_4};
(43,-1) *++={s_5};
(63,-1) *++={s_r}
\end{xy}}\\
\mbox{(a) Brou\'e-Malle-Rouquier diagram for $B(de,e,r)$}\\
\begin{xy}
(-2,8.7); (-2,-8.3) **\crv{(-9,0)} ?(0.8) *!/^2mm/{e} ?>*@{>};
(-7.5,0) *++={\rule{0pt}{4pt}} *\frm{o};
(-11,0)   *++={z} ;
(1.8,-25) *++={\vdots};
(1.8, 25) *++={\vdots} **@{-};
(4, 16) *++={\rule{0pt}{4pt}} *\frm{o};
    (20,0) *++={\rule{0pt}{4pt}} *\frm{o} **@{-};
(4,-16) *++={\rule{0pt}{4pt}} *\frm{o};
    (20,0) *++={\rule{0pt}{4pt}} *\frm{o} **@{-};
(4, 8) *++={\rule{0pt}{4pt}} *\frm{o};
    (20,0) *++={\rule{0pt}{4pt}} *\frm{o} **@{-};
(4,-8) *++={\rule{0pt}{4pt}} *\frm{o};
    (20,0) *++={\rule{0pt}{4pt}} *\frm{o} **@{-};
(4, 0) *++={\rule{0pt}{4pt}} *\frm{o};
    (20,0) *++={\rule{0pt}{4pt}} *\frm{o} **@{-};
(30, 0) *++={\rule{0pt}{4pt}} *\frm{o} **@{-};
(40, 0) *++={\rule{0pt}{4pt}} *\frm{o} **@{-};
(50, 0) *++={\dots}  **@{-};
(60, 0) *++={\rule{0pt}{4pt}} *\frm{o} **@{-};
(0,-18) *++={2};
(6,12) *++={t_2};
(6, 4) *++={t_1};
(6,-4) *++={t_0};
(7,-12)*++={t_{-1}};
(7,-20)*++={t_{-2}};
(23,-3) *++={s_3};
(33,-3) *++={s_4};
(43,-3) *++={s_5};
(63,-3) *++={s_r}
\end{xy}\\
\mbox{(b) Diagram for the new presentation of $B(de,e,r)$}
\end{array}
$$
\caption{Diagrams for $B(de,e,r)$}
\label{fig:Bdeer}
\end{figure}

In the case $e=1$, type $(de,e,r)=(d,1,r)$ is precisely type ${\mathbf{B}}_r$.
The above presentation is claimed in~\cite{BMR98} as valid for $d,e,r\geq2$
(probably to avoid doubling up for the type ${\mathbf{B}}_r$ presentation).
However it is indeed valid in the $e=1$ case as well.
In this case, the BMR relation $(R_2)$ becomes
$t_1=z^{-1}t_0z$, hence $t_1$ is a superfluous generator.
If we remove $t_1$ from the set of generators and replace every
occurrence of $t_1$ in the defining relations with $z^{-1}t_0z$,
then the BMR presentation is reduced to the presentation of type ${\mathbf{B}}_r$,
under the correspondence
$z\mapsto b_1$, $t_0 \mapsto b_2$ and
$s_i \mapsto b_i$ for $3\le i\le r$.

In this article, when we speak of the BMR-presentation,
it will be implicit that $d,r \geq 2$ and $e \geq 1$.

\subsubsection{Semidirect product with $B(\infty,\infty,r)$}

We propose a new presentation for $B(de,e,r)$
which makes clear the decomposition of
$B(de,e,r)$ as a semidirect product of $B(\infty,\infty,r)$
and an infinite cyclic group.
The theorem will be proved in \S\ref{ssec:Pf_Bdeer_pres_infty}.

\begin{theorem}\label{new-pre-deer}
The braid group $B(de,e,r)$ for $d,r\geq 2$ and $e \geq 1$
has the following presentation:
\begin{itemize}
\item Generators: $\{z \} \cup T \cup S$ where
$T=\{t_i\mid i \in {\mathbb Z}\}$ and $S=\{s_j\mid 3\leq j \leq r\}$;
\item Relations: the usual braid relations on $S$, along with\\
$\begin{array}{ll}
(Q_1) & t_i t_{i-1} = t_j t_{j-1}
    \quad \mbox{for $i, j \in {\mathbb Z}$},\\
(Q_2) & s_3 t_i s_3 =t_i s_3 t_i \quad\mbox{for $i \in {\mathbb Z}$},\\
(Q_3) & s_j t_i  =t_i s_j  \quad \mbox{for $i\in{\mathbb Z}$ and $4\leq j \leq r$},\\
(Q_4) & z t_i = t_{i-e} z \quad\mbox{for $i \in {\mathbb Z}$,}\\
(Q_5) & z s_j = s_j z \quad \mbox{for $3 \leq j \leq r$}.
\end{array}$
\end{itemize}
\end{theorem}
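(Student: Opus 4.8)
The plan is to follow the same Tietze-transformation strategy used in the proof of Theorem~\ref{binftyrpresthm}, starting from the Brou\'e--Malle--Rouquier presentation of $B(de,e,r)$ on generators $\{z\}\cup\{t_0,t_1\}\cup S$ with relations the braid relations on $S$ together with $(R_1)$--$(R_6)$. First I would introduce the redundant generators $t_i$, $i\in{\mathbb Z}\setminus\{0,1\}$, by exactly the inductive definitions used there; these defining relations are equivalent to $(Q_1)$, so we obtain a presentation on $\{z\}\cup T\cup S$ with relations the braid relations on $S$, $(Q_1)$, and $(R_1)$--$(R_6)$.

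Next, the relations not involving $z$ are handled for free by the earlier argument: $(R_4)=(P_2)$, $(R_5)=(P_4)$ and $(R_6)=(P_3)$, so Claims 1 and 2 (whose derivations use only $(Q_1)$ and never mention $z$) show that, modulo $(Q_1)$, the pair $(R_4)+(R_5)$ is equivalent to $(Q_2)$ and $(R_6)$ is equivalent to $(Q_3)$. Moreover $(R_3)$ is literally $(Q_5)$. Everything therefore reduces to the one genuinely new point, which I expect to be the main obstacle: modulo $(Q_1)$, the pair $(R_1)+(R_2)$ is equivalent to $(Q_4)$.

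To set this up I would write $\delta=t_1t_0$; by $(Q_1)$ one has $t_it_{i-1}=\delta$ for every $i$, and the formula $t_{2m+k}=\delta^m t_k\delta^{-m}$ from the proof of Theorem~\ref{binftyrpresthm} gives $t_{i+2}=\delta t_i\delta^{-1}$. A short induction using $t_it_{i-1}=\delta$ to collapse adjacent pairs yields the key identity $\langle t_1t_0\rangle^e=t_et_{e-1}\cdots t_1$ (and likewise $\langle t_1t_0\rangle^{e-1}=t_{e-1}\cdots t_1$), splitting into the even and odd cases for $e$. The only real care needed in the whole proof is this bookkeeping of the cyclic word $\langle t_1t_0\rangle^e$ and the parity argument below.

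Granting this identity, both implications are direct. Assuming $(Q_4)$: relation $(R_1)$ follows since $zt_1t_0=t_{1-e}t_{-e}z=t_1t_0z$ by $(Q_4)$ and $(Q_1)$; and rewriting $(R_2)$ as $z\,t_et_{e-1}\cdots t_1=t_0\,z\,t_{e-1}\cdots t_1$, it follows after cancelling $t_{e-1}\cdots t_1$ from $zt_e=t_0z$, which is the instance $i=e$ of $(Q_4)$. Conversely, assuming $(Q_1),(R_1),(R_2)$: relation $(R_1)$ says $z$ commutes with $\delta$, and substituting the key identity into $(R_2)$ and cancelling $t_{e-1}\cdots t_1$ gives the seed $zt_ez^{-1}=t_0$. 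Since $z$ commutes with $\delta$, conjugating $t_{i+2}=\delta t_i\delta^{-1}$ by $z$ propagates any instance of $(Q_4)$ by steps of $2$; the seed $i=e$ covers one parity, and the adjacent seed $zt_{e+1}z^{-1}=z\delta t_e^{-1}z^{-1}=\delta t_0^{-1}=t_1$ covers the other, so $(Q_4)$ holds for all $i\in{\mathbb Z}$. Since neither of the transformed $z$-relations interacts with $(Q_2)$, $(Q_3)$, $(Q_5)$ or the braid relations, these Tietze transformations combine to give exactly the asserted presentation.
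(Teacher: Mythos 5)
Your proposal is correct and follows essentially the same route as the paper: adjoin the generators $t_i$ via $(Q_1)$, quote the $(\infty,\infty,r)$ argument for the relations not involving $z$, identify $(R_3)$ with $(Q_5)$, and reduce everything to showing that $(R_1)+(R_2)$ is equivalent to $(Q_4)$ modulo $(Q_1)$ via the expansion $\langle t_1t_0\rangle^e=t_et_{e-1}\cdots t_1$ and the seed $zt_e=t_0z$. The only (cosmetic) difference is in propagating the seed: you conjugate $t_{i+2}=\delta t_i\delta^{-1}$ by $z$ and step by $2$ from two seeds of opposite parity, whereas the paper steps by $1$ using $zt_{k+1}t_k=t_{k+1-e}zt_k$ and cancellation; both are valid.
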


Let $C_{\infty}=\langle c\rangle$ be an infinite cyclic group acting on $B(\infty, \infty, r)$
as follows:
$$
c \cdot t_i = t_{i-1}
\quad \mbox{for $i\in{\mathbb Z}$}, \quad
c\cdot s_j = s_j \quad\mbox{for $3\le j\le r$}.
$$

Let $z = c^e$ and $C_\infty^{e} = \langle z\rangle$.
Then the presentation of $B(de, e, r)$ in Theorem~\ref{new-pre-deer}
can be considered as
a presentation of $C_\infty^{e} \ltimes B(\infty,\infty,r)$:
the relations $(Q_1)$, $(Q_2)$ and $(Q_3)$ are the relations of $B(\infty,\infty,r)$
and the relations $(Q_4)$ and $(Q_5)$ describe the $C^e_\infty$
action on $B(\infty,\infty,r)$.
Therefore we obtain the following.

\begin{corollary}
\label{bdeerthm}
The homomorphism
$\psi\colon B(de,e,r) \to C_\infty^e \ltimes B(\infty,\infty,r)$
given by $\psi(z) = z$,
$\psi(t_i) = t_i$ and $\psi(s_j) = s_j$ for $i\in{\mathbb Z}$ and $3\le j\le r$
is an isomorphism.\end{corollary}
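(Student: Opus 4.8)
The plan is to deduce the isomorphism directly from the standard presentation of a semidirect product, matching it term by term with the presentation of $B(de,e,r)$ recorded in Theorem~\ref{new-pre-deer}. Recall the general fact: if $N = \langle X \mid R\rangle$ and $Q = \langle Y \mid P\rangle$ and $Q$ acts on $N$ through a homomorphism $\phi\colon Q \to \operatorname{Aut}(N)$, then $Q \ltimes N$ is presented by $\langle X \cup Y \mid R,\ P,\ y x y^{-1} = \phi(y)(x)\rangle$, where each $\phi(y)(x)$ is rewritten as a word in $X$. The strategy is to apply this with $N = B(\infty,\infty,r)$, $Q = C_\infty^e$, and the prescribed action, and then observe that the resulting presentation is \emph{literally} the one in Theorem~\ref{new-pre-deer}.

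First I would confirm that $C_\infty^e = \langle z\rangle$, with $z = c^e$, is free cyclic: it is a nontrivial subgroup of the infinite cyclic group $C_\infty = \langle c\rangle$, hence infinite cyclic, with the empty presentation $\langle z \mid\ \rangle$ (so $P = \emptyset$). Next, by Theorem~\ref{binftyrpresthm}, the normal factor $N = B(\infty,\infty,r)$ is presented on $X = T \cup S$ by the braid relations on $S$ together with $(Q_1)$, $(Q_2)$, $(Q_3)$. It then remains only to read off the action relations. Since $c\cdot t_i = t_{i-1}$ and $c \cdot s_j = s_j$, the generator $z = c^e$ acts by $z \cdot t_i = t_{i-e}$ and $z \cdot s_j = s_j$; rewriting $z t_i z^{-1} = t_{i-e}$ and $z s_j z^{-1} = s_j$ yields exactly the relations $(Q_4)$ and $(Q_5)$. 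Assembling the three groups of relations, the semidirect product presentation of $C_\infty^e \ltimes B(\infty,\infty,r)$ coincides symbol-for-symbol with the presentation of $B(de,e,r)$ in Theorem~\ref{new-pre-deer}.

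With the presentations identified, the map $\psi$ sending each generator $z,\ t_i,\ s_j$ to the like-named generator is well defined, and it admits an inverse defined the same way on generators; since both maps are generator-preserving and the defining relations match on the two sides, $\psi$ is an isomorphism.

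The one point that genuinely requires checking, and which I regard as the main (if modest) obstacle, is the \emph{well-definedness} of the semidirect product itself: one must verify that the formula $c\cdot t_i = t_{i-1}$, $c\cdot s_j = s_j$ really does define an automorphism of $B(\infty,\infty,r)$, so that $z = c^e$ acts through $\operatorname{Aut}(B(\infty,\infty,r))$ and the action relations are consistent. This is already available, however: the assignment is precisely $\tau^{-1}$, where $\tau\colon t_i\mapsto t_{i+1}$, $s_j\mapsto s_j$ is the graph automorphism shown earlier to induce an automorphism of $B(\infty,\infty,r)$. Hence $c = \tau^{-1}$ and $z = \tau^{-e}$ are automorphisms, $C_\infty^e \ltimes B(\infty,\infty,r)$ is genuinely defined, and no further rewriting or estimate is required.
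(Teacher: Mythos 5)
Your proposal is correct and follows essentially the same route as the paper: the paper likewise observes that the presentation of Theorem~\ref{new-pre-deer} is literally the standard presentation of $C_\infty^e \ltimes B(\infty,\infty,r)$, with $(Q_1)$--$(Q_3)$ presenting the normal factor and $(Q_4)$, $(Q_5)$ encoding the action, and deduces the corollary immediately. Your extra check that $c\cdot t_i = t_{i-1}$, $c\cdot s_j = s_j$ really defines an automorphism (namely $\tau^{-1}$) is a point the paper leaves implicit, and it is handled correctly.
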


We propose the diagram shown in Figure~\ref{fig:Bdeer}(b)
for the new presentation of $B(de,e,r)$.
The diagram looks like the diagram for $B(\infty,\infty,r)$
in Figure~\ref{fig:Binfty}(b).
The action of $z$ is illustrated by a curved arrow labeled $e$,
describing the relation $zt_i=t_{i-e}z$.

\subsubsection{Reflection group $G(de,e,r)$}

As long as $d,d'\ge 2$, $B(de,e,r) \cong B(d'e,e,r)$.
The parameter $d$ only makes an appearance when it comes
to the reflection group $G(de,e,r)$.
As described in~\cite{BMR98}, adding the relations $z^d=1$ and $a^2=1$
for all the other generators $a$ to the Brou\'e-Malle-Rouquier presentation
of $B(de,e,r)$
gives a presentation for the complex reflection group $G(de,e,r)$.
The generators are all reflections, of order $2$ except $z$ which is of order $d$.

The generators of the new presentation of $B(de, e, r)$
are those of the Brou\'e-Malle-Rouquier presentation
together with some conjugates of them.
Thus, as it is the case in~\cite{BMR98},
adding the relations $z^d =1$ and $a^2=1$ for all the other generators $a$
to the new presentation of $B(de,e,r)$ gives rise to a new presentation
for the reflection group $G(de,e,r)$,
where the generators are all reflections.
This is a presentation on an infinite set of generators for a finite group!
In fact, since $z^d = 1$, we have
$$t_{i+de} = z^d t_{i+de} = t_{i} z^d = t_i \quad\mbox{for all $i\in{\mathbb Z}$}.$$
Thus we have the following isomorphism.

\begin{corollary}
\label{thm:Gdeer}
The reflection group $G(de,e,r)$ for $d,r\ge 2$ and $e\ge 1$ is isomorphic to
the semidirect product $C_{d}^e \ltimes G(de,de,r)$,
where $C_{d}^{e}=\langle  z \rangle$ is a cyclic group of order $d$.
Hence $G(de, e, r)$ has the following presentation:

\begin{itemize}
\item Generators: $\{z \} \cup T_{de} \cup S$
    where $T_{de}=\{t_i \mid i \in {\mathbb Z}/de\}$ and $S=\{s_j \mid 3\leq j \leq r\}$;
\item Relations: all the relations of\/ $G(de,de,r)$ in Theorem~\ref{beerpresthm},
along with
\begin{itemize}
\item
the relations $z t_i = t_{i-e} z$ and $z s_j = s_j z$
for $i\in{\mathbb Z}/de$ and $3\le j\le r$
describing the semidirect product action,

\item
the relations $z^d=1$, $t_i^2 = 1$ and $s_j^2 = 1$ for $i\in{\mathbb Z}/de$ and $3\le j\le r$
describing the order of the generating reflections.
\end{itemize}
\end{itemize}
\end{corollary}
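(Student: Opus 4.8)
The plan is to start from the presentation of $G(de,e,r)$ described in the paragraph preceding the statement, namely the one obtained by adjoining the relations $z^d=1$, $t_i^2=1$ ($i\in{\mathbb Z}$) and $s_j^2=1$ ($3\le j\le r$) to the presentation of $B(de,e,r)$ given in Theorem~\ref{new-pre-deer}. First I would use the identity $t_{i+de}=t_i$, which follows from $z^d=1$ together with $(Q_4)$ exactly as computed just above the statement, to replace the infinite generating set $T=\{t_i\mid i\in{\mathbb Z}\}$ by the finite set $T_{de}=\{t_i\mid i\in{\mathbb Z}/de\}$. Correspondingly the index ranges in $(Q_1)$--$(Q_5)$ collapse to ${\mathbb Z}/de$, and this already produces the finite presentation displayed in the statement.

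Next I would recognise the sub-presentation on the generators $T_{de}\cup S$---consisting of the braid relations on $S$, the relations $(Q_1)$, $(Q_2)$, $(Q_3)$ with indices taken in ${\mathbb Z}/de$, and the order relations $t_i^2=1$ and $s_j^2=1$---as precisely the presentation of $G(de,de,r)$ furnished by Theorem~\ref{beerpresthm} with $e$ there replaced by $de$. The extra generator $z$ and the leftover relations then acquire a uniform reading: $z^d=1$ fixes the order of $z$, while $(Q_4)$ and $(Q_5)$, rewritten as $z t_i z^{-1}=t_{i-e}$ and $z s_j z^{-1}=s_j$, prescribe precisely how $z$ acts by conjugation on the generating reflections of $G(de,de,r)$.

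To conclude I would invoke the standard presentation of a semidirect product: if $H=\langle X\mid R\rangle$ and the cyclic group $\langle z\mid z^d=1\rangle$ acts on $H$ through an automorphism $\phi$, then $C_d\ltimes H$ is presented by $X\cup\{z\}$ subject to $R$, to $z^d=1$, and to $z x z^{-1}=\phi(x)$ for $x\in X$. Here $\phi$ is the map $t_i\mapsto t_{i-e}$, $s_j\mapsto s_j$, which is the $(-e)$-th power $\tau^{-e}$ of the rotation diagram automorphism $\tau$ of $G(de,de,r)$ discussed after Theorem~\ref{beerpresthm}. Since $\tau$ permutes the $de$ nodes $t_i$ cyclically it has order $de$, so $\phi=\tau^{-e}$ has order $d$ and in particular $\phi^d=\tau^{-de}=\mathrm{id}$; thus the $C_d$-action is well defined. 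Matching the presentation obtained in the first two steps against this semidirect-product presentation yields the isomorphism $G(de,e,r)\cong C_d^e\ltimes G(de,de,r)$, and the explicit presentation asserted in the statement is exactly what the matching produces.

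I expect the only genuine point to verify---the rest being bookkeeping---to be that $\phi$ descends to a bona fide \emph{automorphism} of $G(de,de,r)$, rather than merely a relabelling of the generating set, and that its order divides $d$. Both are immediate consequences of the diagram-automorphism discussion already established for the type $(e,e,r)$ presentation, so no new work beyond re-indexing is required; the reduction from the infinite to the finite generating set in the first step is the place where care with indices matters most.
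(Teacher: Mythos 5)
Your proposal is correct and follows essentially the same route as the paper: collapse the infinite generating set via $t_{i+de}=z^dt_{i+de}=t_iz^d=t_i$, recognize the sub-presentation on $T_{de}\cup S$ as the Corran--Picantin presentation of $G(de,de,r)$, and read off the remaining relations as the standard presentation of a semidirect product by the order-$d$ automorphism $\tau^{-e}$. The paper leaves the semidirect-product matching implicit, so your explicit check that $\phi=\tau^{-e}$ is an automorphism with $\phi^{d}=\mathrm{id}$ is a welcome (if routine) addition rather than a deviation.
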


In this presentation of $G(de, e, r)$, the generators can be represented by
the following $r\times r$ matrices:
$$
\overline{t_i} =\left(\begin{array}{c|c}
 \begin{array}{rl}
0 &  \zeta_{de}^{-i} \\
 \zeta_{de}^i &  0 \\
\end{array}
& \begin{array}{c} \\[-.5em] {\LARGE 0} \\[-.5em] \\ \end{array} \\
\hline
\\[-.7em]
0  & ~ {\LARGE I_{r-2}} ~ \\[-1em]
\\
\end{array} \right),\quad
\begin{array}{l}
\overline{z} = \operatorname{Diag}(\zeta_{de}^e,1,1,\ldots,1),\\[1em]
\overline{s_j} = \mbox{permutation matrix of}\ (j-1\ \ j),
\end{array}
$$
where $\zeta_{de}$ is a primitive $de$-th root of unity.

A diagram for the presentation of $G(de,e,r)$ is in Figure~\ref{fig:Gdeer}.
It is obtained from the diagram for $B(de,e,r)$ in Figure~\ref{fig:Bdeer}(b)
by identifying the node $t_i$ with $t_{i+de}$ for each $i\in{\mathbb Z}$.
In particular, the disc at the left has $de$ nodes on it,
and the action of $z$ twists this disc by $e$ nodes.
The numbers inside the nodes denote the orders of
the generators ($z$ has order $d$, all the others have order $2$).

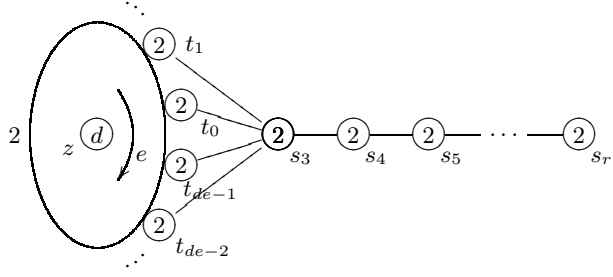
\begin{figure}
\small
$$\begin{xy}
(-2, 0) *{\ellipse(9,15){}};
(4.2, 12) *++={2} *\frm{o};
    (20,0) *++={2} *\frm{o} **@{-};
(4.2,-12) *++={2} *\frm{o};
    (20,0) *++={2} *\frm{o} **@{-};
(7.1, 4) *++={2} *\frm{o};
    (20,0) *++={2} *\frm{o} **@{-};
(7.1,-4) *++={2} *\frm{o};
    (20,0) *++={2} *\frm{o} **@{-};
(30, 0) *++={2} *\frm{o} **@{-};
(40, 0) *++={2} *\frm{o} **@{-};
(50, 0) *++={\dots}  **@{-};
(60, 0) *++={2} *\frm{o} **@{-};
(-15,0) *++={2};
(9, 12) *++={t_1};
(10,-15) *++={t_{de-2}};
(11, 1) *++={t_0};
(11,-7.5) *++={t_{de-1}};
(23,-3) *++={s_3};
(33,-3) *++={s_4};
(43,-3) *++={s_5};
(63,-3) *++={s_r};
(0, 17.5) *++={\cdot};
(1.1, 17) *++={\cdot};
(2, 16) *++={\cdot};
(0, -17.5) *++={\cdot};
(1.1, -17) *++={\cdot};
(2, -16) *++={\cdot};
(-4.1, 0) *++={d} *\frm{o};
(-7.9, -2) *++={z};
(-1.3,6); (-1.3,-6) **\crv{(2.7,0)} ?(0.7) *!/_1.5mm/{e} ?>*@{>};
\end{xy}
$$
\caption{Diagram for the new presentation of $G(de,e,r)$}
\label{fig:Gdeer}
\end{figure}

\subsubsection{Maps between the groups $B(de,e,r)$ for different values of $e$}
\label{ssec:Maps_Bdeer}

Denote by $\iota_e$ the natural embedding:
$$
\iota_e: B(\infty,\infty,r)
\hookrightarrow  C_\infty^{e} \ltimes B(\infty,\infty,r)
\cong B(de,e,r).
$$
Once again, consider a sequence of natural numbers $e_i$
such that $e_0=1$ and $e_i$ divides $e_{i+1}$ for each $i\ge 0$.
Then there are embeddings $C_\infty^{e_{i+1}} \hookrightarrow C_\infty^{e_{i}}$
which maps the generator of $C_\infty^{e_{i+1}}$ to
the $\frac{e_{i+1}}{e_i}$-th power of the generator of $C_\infty^{e_i}$.
These embeddings may be extended to
$
\iota_{e_{i}}^{e_{i+1}} :
C_\infty^{e_{i+1}} \ltimes B(\infty,\infty,r)
\hookrightarrow C_\infty^{e_{i}}\ltimes B(\infty,\infty,r).
$
By Corollary~\ref{bdeerthm}, this map is thus an embedding between the braid groups:
$$
\iota_{e_{i}}^{e_{i+1}} :  B(d e_{i+1}, e_{i+1}, r)
\hookrightarrow B(d e_{i}, e_{i}, r).
$$
Hence we have the following commutative diagram, where the rows are exact.
$$
\xymatrix{
0 \ar[r] & B(\infty,\infty,r) \ar[r]^{\iota_{e_{i+1}}} \ar@{=}[d]
    & B(de_{i+1},e_{i+1},r) \ar[r] \ar@{^{(}->}[d]^{\iota_{e_{i}}^{e_{i+1}}}
    & C^{e_{i+1}}_\infty \ar[r] \ar@{^{(}->}[d] & 0 \\
0 \ar[r] & B(\infty,\infty,r) \ar[r]^{\iota_{e_i}}
    & B(de_i,e_i,r) \ar[r]
    & C^{e_i}_\infty \ar[r] & 0 \\
}$$

Note that
$
\iota_{e_{i}}^{e_{i+1}} \circ \iota_{e_{i+1}} = \iota_{e_{i}}$ and
$\iota_{e_{i}}^{e_{i+1}} \circ \iota_{e_{i+1}}^{e_{i+2}} = \iota_{e_{i}}^{e_{i+2}}$
for all $i \geq 0$,
and that $B(\infty, \infty, r)$ is the inverse limit of the sequence
$$\cdots \hookrightarrow B(d e_{i+1},e_{i+1},r)
\stackrel{\ \iota_{_{e_{i}}}^{^{e_{i+1}}}}{\hookrightarrow}
B(d e_{i},e_{i},r)
\hookrightarrow
\cdots
\stackrel{\ \iota_{_{e_{1}}}^{^{e_{2}}}}{\hookrightarrow}
B(d e_{1},e_{1},r)
\stackrel{\ \iota_{_{1}}^{^{e_{1}}}}{\hookrightarrow}
B(d ,1,r).
$$
We remark that $B(d ,1,r) \cong B({\mathbf{B}}_r)$, the braid group of type ${\mathbf{B}}_r$.
This is discussed in greater length in \S\ref{ssec:Embed_Bdeer_Br}.

\subsubsection{Proof of Theorem~\ref{new-pre-deer}}
\label{ssec:Pf_Bdeer_pres_infty}

Similarly to the proof for the new presentation of $B(\infty,\infty,r)$,
we add new generators $\{t_i\mid i\in{\mathbb Z}\setminus\{0,1\}\}$ to the
Brou\'e-Malle-Rouquier presentation along with the relation
$(Q_1)$ $t_it_{i-1}=t_jt_{j-1}$ for all $i,j\in{\mathbb Z}$.
Then, from the proof of Theorem~\ref{binftyrpresthm},
we know that the relations $(R_4)+(R_5)+(R_6)$ are equivalent
to $(Q_2)+(Q_3)$. The relation $(R_3)$ is identical to $(Q_5)$.
Therefore $B(de,e,r)$ has the following presentation.

\begin{itemize}
\item Generators: $\{z \} \cup T \cup S$ where $T=\{t_i\mid i \in {\mathbb Z}\}$
and $S=\{s_j\mid 3\leq j \leq r\}$;
\item Relations: the usual braid relations on $S$, along with\\
$\begin{array}{ll}
(Q_1) & t_i t_{i-1} = t_j t_{j-1}
    \quad \mbox{for all $i, j \in {\mathbb Z}$},\\
(Q_2) & s_3 t_i s_3 =t_i s_3 t_i \quad\mbox{for all $i \in {\mathbb Z}$},\\
(Q_3) & s_j t_i  =t_i s_j  \quad \mbox{for all $i\in{\mathbb Z}$ and $4\leq j \leq r$},\\
(R_1) & z t_1 t_0 = t_1 t_0 z, \\
(R_2) & z \langle t_1 t_0\rangle^e = t_0 z \langle t_1 t_0\rangle^{e-1}, \\
(Q_5) & z s_j =s_j z \quad \mbox{for $3 \leq j \leq r$}.
\end{array}$
\end{itemize}

The following claim completes the proof.

\begin{claim}{Claim}
Assuming $(Q_1)$, the relation $(Q_4)$ is equivalent to $(R_1)+(R_2)$.
\end{claim}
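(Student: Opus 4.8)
The plan is to work modulo $(Q_1)$ throughout and to reduce everything to two consecutive instances of $(Q_4)$. Write $\delta=t_1t_0$. Relation $(Q_1)$ says precisely that $t_it_{i-1}=\delta$ for every $i\in{\mathbb Z}$, and, as in the proof of Theorem~\ref{binftyrpresthm}, it gives $t_{2m+k}=\delta^m t_k\delta^{-m}$ for $k=0,1$, hence the conjugation formula $\delta^m t_j\delta^{-m}=t_{j+2m}$ for all $j,m\in{\mathbb Z}$. The first thing I would record is that, modulo $(Q_1)$, the whole family $(Q_4)$ is equivalent to its two instances $i=0,1$: if $zt_0=t_{-e}z$ and $zt_1=t_{1-e}z$ hold, then $z\delta=zt_1t_0=t_{1-e}t_{-e}z=\delta z$ (using $t_{1-e}t_{-e}=\delta$), so $z$ commutes with every power of $\delta$, and therefore $zt_{2m+k}=\delta^m(zt_k)\delta^{-m}=\delta^m t_{k-e}\delta^{-m}z=t_{(2m+k)-e}z$, which is $(Q_4)$ for all $i$.

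For the forward implication $(Q_4)\Rightarrow(R_1)+(R_2)$, the relation $(R_1)$ is immediate: $z\delta=zt_1t_0=t_{1-e}zt_0=t_{1-e}t_{-e}z=\delta z$. Granting $(R_1)$, everything then rests on the single core equivalence
$$(R_2)\iff zt_1=t_{1-e}z\qquad\text{(modulo $(Q_1)$ and $(R_1)$),}$$
since $(Q_4)$ certainly entails $zt_1=t_{1-e}z$, giving $(R_2)$. For the converse $(R_1)+(R_2)\Rightarrow(Q_4)$, the core equivalence yields $zt_1=t_{1-e}z$; feeding this into $(R_1)$ gives $t_{1-e}zt_0=zt_1t_0=t_1t_0z=\delta z$, whence $zt_0=t_{1-e}^{-1}\delta z=t_{-e}z$, i.e.\ $(Q_4)$ for $i=0$. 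The reduction of the first paragraph then upgrades the two instances $i=0,1$ to all of $(Q_4)$.

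The main computation, and the only delicate point, is establishing the core equivalence. Here I would first rewrite the cyclic words through $(Q_1)$ by the parity formula $\langle t_1t_0\rangle^k=\delta^{\lfloor k/2\rfloor}t_1^{\,k\bmod 2}$, so that $(R_2)$ reads $z\delta^n=t_0z\delta^{n-1}t_1$ when $e=2n$ and $z\delta^nt_1=t_0z\delta^n$ when $e=2n+1$. Using $(R_1)$ to slide $z$ past every power of $\delta$, and then applying the conjugation formula together with the consecutive relations $t_jt_{j-1}=\delta$, each of these reduces by reversible steps to $zt_1=t_{1-e}z$: in the odd case $zt_1=\delta^{-n}t_0\delta^n z=t_{-2n}z=t_{1-e}z$, and in the even case $zt_1=\delta^{-(n-1)}t_0^{-1}\delta^n z=t_{2-2n}^{-1}\delta z=t_{1-2n}z=t_{1-e}z$, the two parities landing on the same conclusion. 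The bookkeeping with the cyclic product $\langle t_1t_0\rangle^e$ and with the parity of $e$ is exactly where care is needed, and I expect it to be the only real obstacle; conceptually nothing beyond the conjugation formula $\delta^m t_j\delta^{-m}=t_{j+2m}$ and the consecutive relations is involved.

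Once the core equivalence is in place, the two implications assemble as above: $(Q_4)$ gives $(R_1)$ directly and $(R_2)$ via the equivalence, while $(R_1)+(R_2)$ gives $zt_1=t_{1-e}z$, then $zt_0=t_{-e}z$, and finally all of $(Q_4)$ by the two‑instance reduction. This proves that, assuming $(Q_1)$, the relation $(Q_4)$ is equivalent to $(R_1)+(R_2)$, completing the proof of Theorem~\ref{new-pre-deer}.
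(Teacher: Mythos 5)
Your proof is correct; every step checks out (in particular the conjugation formula $\delta^m t_j\delta^{-m}=t_{j+2m}$, the normal form $\langle t_1t_0\rangle^k=\delta^{\lfloor k/2\rfloor}t_1^{\,k\bmod 2}$, and the two parity computations both landing on $zt_1=t_{1-e}z$ are all valid consequences of $(Q_1)$ and $(R_1)$). The route is genuinely different in organization from the paper's. The paper proves the two implications separately: for $(Q_4)\Rightarrow(R_2)$ it expands $\langle t_1t_0\rangle^e$ via $(Q_1)$ into a product of consecutive generators $(t_{2m}t_{2m-1})\cdots(t_2t_1)$ and pushes $z$ through term by term; for the converse it first extracts the single instance $zt_e=t_0z$ from $(R_2)$ by cancelling a common right factor, and then runs an induction on the index $i$, using $(R_1)$ and $(Q_1)$ to propagate $(Q_4)$ from $i=k$ to $i=k\pm1$. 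You instead (a) reduce the infinite family $(Q_4)$ to its two instances $i=0,1$ once and for all via conjugation by $\delta=t_1t_0$, and (b) package the entire content of $(R_2)$ into a single reversible chain of equivalences ending at $zt_1=t_{1-e}z$, which serves both implications simultaneously. Your version is more compact and avoids the index induction entirely; the price is free use of inverses (e.g.\ $t_{2-2n}^{-1}\delta$), which is harmless here since the claim concerns group presentations, whereas the paper's term-by-term manipulation stays with positive words until a final cancellation, in keeping with the positive-homogeneous-presentation theme of the rest of the section. Both arguments share the same parity case split on $e$, so the computational core is the same; the difference is which lemma carries the weight.
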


\begin{proof}[Proof of Claim]
Suppose that $(Q_4)$ holds, i.e., $zt_i=t_{i-e}z$ for all $i\in{\mathbb Z}$.
Then $(R_1)$ holds because
$$
zt_1t_0
\stackrel{Q_4}{=} t_{1-e}zt_0
\stackrel{Q_4}{=} t_{1-e}t_{-e}z
\stackrel{Q_1}{=} t_{1}t_{0}z.
$$
When $e=2m$,
\begin{align*}
z\langle  t_1t_0\rangle^{e}
&= z\langle  t_1t_0\rangle^{2m} = z(t_1t_0)^m
 \stackrel{Q_1}{=}  z(t_{2m}t_{2m-1})\cdots (t_2t_1)\\
&\stackrel{Q_4}{=} t_0z(t_{2m-1}t_{2m-2})\cdots (t_3t_2)t_1
 \stackrel{Q_1}{=}  t_0z(t_1t_0)^{m-1}t_1
 = t_0 z\langle t_1t_0\rangle^{e-1}.
\end{align*}
When $e=2m+1$,
\begin{align*}
z\langle  t_1t_0\rangle^{e}
&= z\langle t_1t_0\rangle^{2m+1} = z(t_1t_0)^mt_1
 \stackrel{Q_1}{=}  z(t_{2m+1}t_{2m})\cdots (t_3t_2)t_1\\
&\stackrel{Q_4}{=} t_0z(t_{2m}t_{2m-1})\cdots (t_2t_1)
 \stackrel{Q_1}{=}  t_0z(t_1t_0)^{m}
 = t_0 z\langle t_1t_0\rangle^{e-1}.
\end{align*}
Therefore $(R_2)$ holds.

\medskip
Conversely, suppose that both $(R_1)$ and $(R_2)$ hold.
First, we will show that $(Q_4)$ holds for $i=e$, that is, $zt_e=t_0z$.
When $e=2m$,
\begin{align*}
z\langle t_1t_0\rangle^e &=z(t_1t_0)^m \stackrel{Q_1}{=} z (t_{2m}t_{2m-1})\cdots(t_2t_1)
\quad \text{and}\\
t_0 z\langle t_1t_0\rangle^{e-1} & = t_0 z (t_1t_0)^{m-1}t_1
    \stackrel{Q_1}{=} t_0 z(t_{2m-1}t_{2m-2})\cdots(t_3t_2)t_1.
\end{align*}
Since $z\langle  t_1t_0\rangle^{e}=t_0 z\langle t_1t_0\rangle^{e-1}$ by $(R_2)$, we have
$$
z t_{2m} (t_{2m-1}\cdots t_1) = t_0 z (t_{2m-1}\cdots t_1).
$$
Hence $zt_{2m}=t_0z$, that is, $zt_e=t_0z$.
Therefore $(Q_4)$ holds for $i=e$ when $e$ is even.

When $e=2m+1$,
\begin{align*}
z\langle t_1t_0\rangle^e &=z(t_1t_0)^m t_1 \stackrel{Q_1}{=} z (t_{2m+1}t_{2m})\cdots(t_3t_2)t_1
\quad \text{and} \\
t_0 z\langle t_1t_0\rangle^{e-1} &= t_0 z (t_1t_0)^{m}
    \stackrel{Q_1}{=} t_0 z(t_{2m}t_{2m-1})\cdots(t_2t_1).
\end{align*}
Since $z\langle  t_1t_0\rangle^{e}=t_0 z\langle t_1t_0\rangle^{e-1}$ by $(R_2)$, we have
$$
z t_{2m+1} (t_{2m}\cdots t_1) = t_0 z (t_{2m}\cdots t_1).
$$
Hence $zt_{2m+1}=t_0z$, that is, $zt_e=t_0z$.
Therefore $(Q_4)$ holds for $i=e$ when $e$ is odd.

\medskip
Now we will show that if $(Q_4)$ holds for $i=k$, which we denote by  $(Q_{4,k})$,
then $(Q_4)$ holds for $i=k-1$ and $i=k+1$.
Assume that $(Q_4)$ holds for $i=k$.
Because
$$
zt_{k+1}t_k
\stackrel{Q_1}{=} zt_1t_0
\stackrel{R_1}{=} t_1t_0z
\stackrel{Q_1}{=} t_{k+1-e}t_{k-e}z
\stackrel{Q_{4,k}}{=} t_{k+1-e}zt_k,
$$
we have $zt_{k+1}=t_{k+1-e}z$, hence $(Q_4)$ holds for $i=k+1$.
Similarly, because
$$
t_{k-e}t_{k-e-1}z
\stackrel{Q_1}{=} t_1t_0z
\stackrel{R_1}{=} zt_1t_0
\stackrel{Q_1}{=} zt_kt_{k-1}
\stackrel{Q_{4,k}}{=} t_{k-e}zt_{k-1},
$$
we have $t_{k-e-1}z=zt_{k-1}$, hence $(Q_4)$ holds for $i=k-1$.
By induction on $i$, we conclude that $(Q_4)$ holds for all $i\in{\mathbb Z}$.
\end{proof}

\subsection{Garside structures on $B(\infty,\infty,r)$ and $B(de,e,r)$}
\label{sec:Garside_structure}

In this subsection, we show that the new presentations of
$B(\infty,\infty,r)$ and $B(de,e,r)$ give rise to
quasi-Garside structures.

Garside structures were defined by Dehornoy and Paris~\cite{DP99},
in which the strategy and results of
Garside~\cite{Gar69}, Deligne~\cite{Del72}, Brieskorn and Saito~\cite{BS72} still hold.
A Garside structure provides tools for calculating in the group,
for solving word and conjugacy problems, as well as
for giving certain information about
the group (such as being torsion-free).
For a detailed description, see~\cite{DP99,DDGKM14}.
We use the definition in~\cite{Dig06}.

\begin{definition}
A monoid $M$ is said to
be \emph{quasi-Garside} if the following conditions are satisfied:
\begin{enumerate}
\item
$M$ is {\em atomic}---that is,  for every $m \in M$, the number of
factors in a product equal to $m$ is bounded;

\item
$M$ is left- and right-cancellative;

\item
$M$ is a lattice with respect to each of the orders defined by
left divisibility and by right divisibility;

\item
$M$ has a  \emph{Garside element} $\Delta$ for which
the set of left divisors equals the set of right divisors, and this
set generates $M$.
\end{enumerate}
\end{definition}

A quasi-Garside monoid satisfies Ore's conditions~\cite{CP61},
and thus embeds in its group of fractions.

\begin{definition}
Let $M$ be a quasi-Garside monoid with Garside element $\Delta$, and
let $G$ be the group of fractions of $M$.
We identify the elements of $M$ and their images in $G$.
The pair $(M, \Delta)$ is called a {\em quasi-Garside structure} on $G$,
and the triple $(G, M, \Delta)$ or just simply $G$ is called
a {\em quasi-Garside group}.
The quasi-Garside monoid $M$ of $G$ is often denoted by $G^+$.
\end{definition}

When the set of left divisors of the Garside element $\Delta$ is finite,
the word `quasi' may be dropped for quasi-Garside.

If the monoid $M$ is defined by a (positive) presentation with {\em homogeneous}
relations---that is, for every relation, the left and right hand sides have equal length
in the generators---then the first condition
of being atomic is immediately satisfied, with the bound for the number of factors
in a product equal to $m$ being precisely the number of generators in an expression for $m$
(since this number is the same for all expressions for $m$).
This is always the case in the presentations we consider here.

For the second and third conditions of being cancellative and being a lattice,
we introduce the notions of complementedness and completeness of Dehornoy~\cite{Deh03}.
Let $M$ be a monoid defined by a positive presentation $\langle \, S \mid R \,\rangle$.
Let $S^*$ denote the free monoid generated by $S$,
and let $\varepsilon$ denote the empty word.

\begin{definition}
For words $w$, $w'$ on $S\cup S^{-1}$, we say that $w$ {\em right-reverses} to
$w'$, denoted $w \curvearrowright_r w'$,
if $w'$ is obtained from $w$ (iteratively)
\begin{itemize}
\item
either by deleting some subword $u^{-1} u$ for $u\in S^*\setminus\{\varepsilon\}$,

\item
or by replacing some subword $u^{-1}v$ for $u, v\in S^*\setminus\{\varepsilon\}$
with a word $v'u'^{-1}$ such that $uv'=vu'$ is a relation of $R$.
\end{itemize}
\end{definition}

For any $u, v\in S^*$, $u^{-1}v \curvearrowright_r \varepsilon$ implies
that $u=v$ in $M$.

\begin{definition}
The presentation $\langle \, S \mid R \,\rangle $ of $M$ is said to be
\begin{enumerate}
\item
{\em right-complemented} if for any $x, y\in S$, $R$ has at most one relation
of the form $x\cdots = y\cdots$ and no relation of the form $x\cdots =x\cdots$;

\item
{\em right-complete} if for any $u, v\in S^*$, $u=v$ in $M$ implies
$u^{-1}v \curvearrowright_r \varepsilon$.
\end{enumerate}
\end{definition}

The left versions of the above notions are defined symmetrically.
In this subsection, several notions have a left and a right version.
Without `left' or `right', we assume both versions.
For instance, ``$M$ is cancellative'' means ``$M$ is left- and right-cancellative''.

\medskip

Let $B^+(e,e,r)$, $B^+(\infty,\infty,r)$, $B^+(de,e,r)$ and
$B^+({\mathbf{B}}_{r})$ be the monoids
defined by the presentations in Theorems~\ref{beerpresthm}, \ref{binftyrpresthm}
and~\ref{new-pre-deer}
and the presentation in~\eqref{pres:B_r} on page~\pageref{fig:typeAB}, respectively.
It is known that $B^+(e,e,r)$ and $B^+({\mathbf{B}}_{r})$ are Garside.
In the remaining of this subsection,
we will show that $B^+(\infty,\infty,r)$ and $B^+(de,e,r)$ are quasi-Garside.

\begin{remark}
The Brou\'e-Malle-Rouquier presentation for $B(de,e,r)$ does not
give rise to a quasi-Garside structure for all $e\ge 1$.
(Notice that when $e=1$, the group itself is a Garside group
because $B(d,1,r)\cong B({\mathbf{B}}_r)$.)
Assume that the monoid defined by the presentation is quasi-Garside.
Both $s_3t_1s_3=t_1s_3t_1$ and $s_3t_1t_0s_3t_1t_0=t_1t_0s_3t_1t_0s_3$
are common right multiples of $s_3$ and $t_1$, hence
$$
(s_3t_1s_3)\wedge (s_3t_1t_0s_3t_1t_0)
= (s_3t_1) (s_3\wedge t_0s_3t_1t_0)
= s_3t_1,
$$
is also a common right multiple of $s_3$ and $t_1$,
where $\wedge$ denotes the left gcd.
However $s_3t_1$ is not a right multiple of $t_1$, which is a contradiction.
The same argument shows that Shi's presentation for $B(\infty,\infty,r)$
does not give rise to a quasi-Garside structure.
\end{remark}

\subsubsection{Garside structure on $B(\infty,\infty,r)$}
\label{ssec:Garside_Binfty}

Since the presentation of $B^+(\infty,\infty,r)$ is homogeneous,
the monoid is atomic.
The conditions of being cancellative and a lattice
can be checked by using complementedness and completeness.

\begin{lemma}\label{lem:Deh03}
\cite[Corollary 6.2 and Propositions 3.3, 6.7 and 6.10]{Deh03}
Let $M$ be a monoid defined by a complemented and complete presentation
with $S$ the set of generators.
Then the following hold.
\begin{enumerate}
\item The monoid $M$ is cancellative.
\item
Suppose that there exists $S'$ such that
$S\subseteq S'\subseteq S^*$ and for any $u,v\in S'$ there exist
$u',v'\in S'$ with $uv'= vu'$  in $M$.
Then $M$ admits right lcm's.

\item Suppose that there exists $S''$ such that
$S\subseteq S''\subseteq S^*$ and for any $u,v\in S''$ there exist
$u'', v''\in S''$ with $v''u= u''v$ in $M$.
Then $M$ admits left lcm's.
\end{enumerate}
\end{lemma}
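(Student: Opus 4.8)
The statement is exactly the output of Dehornoy's word-reversing machinery \cite{Deh03}, so the plan is to recall why that machinery yields cancellativity and least common multiples under the two hypotheses, rather than to reprove the cited propositions in full. The driving tool is the right-reversing relation $\curvearrowright_r$ introduced above. Right-complementedness makes reversing \emph{deterministic}: whenever a negative-positive factor $x^{-1}y$ with $x,y\in S$ is encountered, there is at most one defining relation $x\cdots=y\cdots$ to apply, so the rewriting of any word $u^{-1}v$ proceeds without choices and computes a well-defined partial \emph{right complement}. For each pair $u,v\in S^*$ admitting a common right multiple, this records words $u',v'$ with $uv'=vu'$ and $u^{-1}v\curvearrowright_r v'u'^{-1}$.

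The first key step I would isolate is the equivalence between equality in $M$ and reversibility to $\varepsilon$: the easy direction $u^{-1}v\curvearrowright_r\varepsilon \Rightarrow u=v$ is the remark already recorded above, and \emph{completeness} is precisely the hypothesis supplying the converse, $u=v$ in $M \Rightarrow u^{-1}v\curvearrowright_r\varepsilon$. Granting this equivalence, cancellativity (part (i)) drops out: if $au=av$ in $M$ for $a\in S$, then $(au)^{-1}(av)=u^{-1}(a^{-1}a)v\curvearrowright_r\varepsilon$, and since the leading $a^{-1}a$ deletes first, the computation factors through $u^{-1}v\curvearrowright_r\varepsilon$, whence $u=v$ in $M$. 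An induction on the length of the cancelled prefix upgrades this from a single generator to an arbitrary word, and the symmetric left-reversing argument gives right-cancellativity; this is Corollary~6.2 of \cite{Deh03}.

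For parts (ii) and (iii) the essential point is \emph{termination} of reversing. The hypothesis that some $S\subseteq S'\subseteq S^*$ is closed under taking complements---for all $u,v\in S'$ there exist $u',v'\in S'$ with $uv'=vu'$---guarantees that the reversing of any $u^{-1}v$ never produces a factor outside this controlled family, so the process halts at a word $v'u'^{-1}$ with $uv'=vu'$ a common right multiple. Cancellativity then forces this common multiple to be the least one, i.e.\ the right lcm, and the existence of all right lcm's together with atomicity yields the right-divisibility lattice. These are Propositions~6.7 and~6.10 of \cite{Deh03}, with Proposition~3.3 supplying the basic well-definedness of the complement; the symmetric statement with $S''$ and left-reversing gives left lcm's.

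The genuinely delicate ingredient---the one I would flag as the main obstacle were one to insist on a self-contained argument---is the completeness direction $u=v\Rightarrow u^{-1}v\curvearrowright_r\varepsilon$, because a priori reversing is not confluent and a derivation of $u=v$ from the defining relations need not respect the reversing order. Dehornoy resolves this by reducing completeness to a local \emph{cube condition} on triples of generators and then inducting on the complexity of the reversing diagrams. Here, however, completeness is taken as a hypothesis, so we may simply invoke it; the real work to follow will be to verify the complementedness and completeness hypotheses, together with suitable closed families $S'$ and $S''$, for the specific presentations of $B^+(\infty,\infty,r)$ and $B^+(de,e,r)$.
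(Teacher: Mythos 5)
This lemma is stated in the paper purely as a citation of \cite{Deh03} (Corollary 6.2 and Propositions 3.3, 6.7 and 6.10) with no proof supplied, and your proposal likewise ultimately defers to those same results, so the two approaches coincide. Your sketch of the word-reversing mechanics is accurate, with one loose phrase worth noting: it is completeness (which makes the reversing-computed complement a left divisor of every common right multiple), not cancellativity alone, that makes the computed common multiple the \emph{least} one --- but since you invoke Propositions 6.7 and 6.10 for precisely that step, nothing essential is missing.
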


The presentation of $B^+(\infty,\infty,r)$ is complemented.
As for completeness, the cases to be considered are identical with
those for $B^+(e,e,r)$ (Figures 7 and 8 in~\cite{CP11}),
hence it can be checked in a manner entirely analogous
to that for $B^+(e,e,r)$ given in \cite{CP11}.
In that paper, the presentation of $B^+(e,e,r)$ was shown to be complete
by using the cube condition on all triples of generators.
Therefore we have the following corollary by Lemma~\ref{lem:Deh03}.

\begin{corollary}\label{cor:cancel}
The monoid $B^+(\infty,\infty,r)$ is  cancellative.
\end{corollary}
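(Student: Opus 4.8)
The plan is to deduce cancellativity directly from Lemma~\ref{lem:Deh03}(1), whose hypotheses require only that the defining presentation of $B^+(\infty,\infty,r)$ from Theorem~\ref{binftyrpresthm} be \emph{complemented} and \emph{complete}. Thus the entire task reduces to verifying these two properties for that presentation.

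First I would dispose of complementedness, which is a purely syntactic check. For each ordered pair of distinct generators $x,y\in T\cup S$, one confirms that the relation set contains at most one relation of the form $x\cdots=y\cdots$ and no relation of the form $x\cdots=x\cdots$. The braid relations on $S$, together with $(Q_2)$ and $(Q_3)$, satisfy this by inspection, and the only relations coupling distinct letters of $T$ are the instances of $(Q_1)$, namely $t_it_{i-1}=t_jt_{j-1}$; for a fixed ordered pair of leading letters there is exactly one such relation. Hence the presentation is complemented.

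The substantive step is completeness, which by Dehornoy's criterion follows once the cube condition holds on every triple of generators. The essential observation is that, although $T$ is infinite, the relations are \emph{local}: $(Q_1)$ links only consecutive pairs $t_i,t_{i-1}$, $(Q_2)$ involves only $s_3$ and a single $t_i$, and $(Q_3)$ is a commutation. Consequently every triple of generators belongs to one of finitely many combinatorial types, and these are precisely the types already treated for $B^+(e,e,r)$ in~\cite{CP11}: passing from the index set ${\mathbb Z}/e$ to ${\mathbb Z}$ alters only the labels, never the shape of any interaction. I would therefore check the cube condition type by type, reusing the reversing diagrams of Figures~7 and~8 of~\cite{CP11}.

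The only point that genuinely needs attention---and hence the main obstacle---is confirming that no new triple appears once the indices range over all of ${\mathbb Z}$ rather than a cyclic group. This reduces to verifying that the reversing computations for triples such as $(t_i,t_{i-1},t_{i-2})$, $(s_3,t_i,t_{i-1})$ and $(s_3,t_i,s_4)$ close up; they do, because $(Q_1)$ forces every product $t_kt_{k-1}$ to represent one common element and because $s_3$ interacts with each $t_i$ exactly as in the cyclic case, so the diagrams are formally identical to those already verified. With completeness established, Lemma~\ref{lem:Deh03}(1) yields that $B^+(\infty,\infty,r)$ is cancellative at once.
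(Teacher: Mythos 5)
Your proposal follows the paper's own proof essentially verbatim: complementedness is checked by inspection, completeness is reduced to the cube condition on triples of generators via the observation that the cases coincide with those already verified for $B^+(e,e,r)$ in Figures~7 and~8 of~\cite{CP11}, and cancellativity then follows from Lemma~\ref{lem:Deh03}(1). The argument is correct and matches the paper's approach.
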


Next, we will find sets $S'$ and $S''$ satisfying the conditions
in Lemma~\ref{lem:Deh03}.
Define a map
 $$\begin{array}{rcl}
\psi\colon B^+({\mathbf{B}}_{r-1}) & \to & B^+(\infty,\infty,r) \\[1ex]
b_1 &\mapsto & t_1 t_0 \\[1ex]
b_i &\mapsto & s_{i+1} \quad \mbox{for \ $2\le i\le r-1$}.
\end{array}$$
It is easy to see that $\psi$ is a well-defined monoid homomorphism
because the defining relations in $B^+({\mathbf{B}}_{r-1})$ can be realized by
the relations in $B^+(\infty,\infty,r)$. For example,
$t_1t_0s_3t_1t_0s_3=s_3t_1t_0s_3t_1t_0$ holds in
$B^+(\infty,\infty,r)$ (see the proof of
Theorem~\ref{binftyrpresthm}), hence
$\psi(b_1)\psi(b_2)\psi(b_1)\psi(b_2)=\psi(b_2)\psi(b_1)\psi(b_2)\psi(b_1)$
holds in $B^+(\infty,\infty,r)$.
We remark that $\psi$ is injective.
To see this, consider the composition with the morphism
from $B(\infty,\infty,r)$ to $B({\mathbf{A}}_{r-1})$ which maps $t_i$ to $\sigma_1$
for $i\in{\mathbb Z}$ and $s_j$ to $\sigma_{j-1}$ for $3\le j\le r$.
The composition is the well-known embedding of $B^+({\mathbf{B}}_{r-1})$ into $B^+({\mathbf{A}}_{r-1})$
which maps $b_1$ to $\sigma_1^2$ and $b_i$ to $\sigma_i$ for $2\le i\le r-1$.

\medskip
The classical Garside element of the braid group $B({\mathbf{B}}_{r-1})$, denoted by
$\Delta_{{\mathbf{B}}_{r-1}}$, is the lcm of the generators
$\{b_1, b_2, \ldots, b_{r-1}\}$. It is a central element  of $B^+({\mathbf{B}}_{r-1})$,
written as
$$
\Delta_{{\mathbf{B}}_{r-1}}=(b_{r-1}b_{r-2}\cdots b_1)^{r-1}.
$$

Let $\Lambda\in B^+(\infty,\infty,r)$ be the image of $\Delta_{{\mathbf{B}}_{r-1}}$
under $\psi$.
Then $\Lambda$ has the factorization
\begin{equation*}
\Lambda=\psi(\Delta_{{\mathbf{B}}_{r-1}})=(At_1t_0)^{r-1},
\end{equation*}
where $A=s_rs_{r-1}\cdots s_3$.

Let $L(\Lambda)$ and $R(\Lambda)$ denote the sets of all left and right divisors of
$\Lambda$, respectively.

\begin{proposition}\label{GarsideElt}
The element $\Lambda$ is a Garside element of $B^+(\infty,\infty,r)$. That is,
$L(\Lambda)=R(\Lambda)$, and $L(\Lambda)$ generates $B^+(\infty,\infty,r)$.
\end{proposition}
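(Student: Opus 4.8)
The plan is to establish that $\Lambda=(At_1t_0)^{r-1}$ is a Garside element by showing two things: first, that $L(\Lambda)=R(\Lambda)$, and second, that this common set generates the monoid. I will exploit the injective monoid homomorphism $\psi\colon B^+(\mathbf{B}_{r-1})\to B^+(\infty,\infty,r)$ together with the centrality of $\Delta_{\mathbf{B}_{r-1}}$, and the $\tau$-symmetry of the presentation. Since $\Lambda=\psi(\Delta_{\mathbf{B}_{r-1}})$ and $\Delta_{\mathbf{B}_{r-1}}$ is central in $B^+(\mathbf{B}_{r-1})$, I first want to transfer enough of its good behaviour across $\psi$, but because $\psi$ is not surjective, I cannot simply pull back divisibility; the monoid $B^+(\infty,\infty,r)$ has generators $t_i$ for all $i\in\mathbb Z$, not just $t_0,t_1$.

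\textbf{Establishing $L(\Lambda)=R(\Lambda)$.} The cleanest route is to show that conjugation by $\Lambda$ induces a permutation of the generating set $T\cup S$, which forces $\Lambda$ to normalize the set of simple elements and hence $w\preceq_L\Lambda$ iff $w\preceq_R\Lambda$. First I would compute the action of $\Lambda$ by conjugation on each generator. Writing $\Lambda=(At_1t_0)^{r-1}$ with $A=s_r\cdots s_3$, I expect $\Lambda$ to act as the diagram automorphism $\tau$ (possibly composed with the order-reversing symmetry of the $\mathbf{A}_{r-2}$ subdiagram): concretely I anticipate $\Lambda\, t_i\,\Lambda^{-1}=t_{i+c}$ for some fixed shift $c$ and $\Lambda\, s_j\,\Lambda^{-1}=s_{\sigma(j)}$ for a permutation $\sigma$ of $\{3,\dots,r\}$. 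The key computation to carry out explicitly is the conjugation of $t_0$ and $t_1$ by $At_1t_0$ using relations $(Q_1)$ and $(Q_2)$; this is exactly the kind of cyclic sliding that the proof of Theorem~\ref{binftyrpresthm} already makes available. Once I know $\Lambda(T\cup S)\Lambda^{-1}=T\cup S$, symmetry of left and right divisibility of $\Lambda$ follows by the standard Garside argument: if $\Lambda=uv$ then $\Lambda=v\,(v^{-1}uv)=v\,\Lambda^{-1}(\Lambda u\Lambda^{-1})\Lambda\cdots$, more precisely $v$ is a right divisor whenever $u=v'\Lambda^{-1}\cdots$; cancellativity (Corollary~\ref{cor:cancel}) makes these manipulations legitimate.

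\textbf{Establishing that $L(\Lambda)$ generates and that lcm's exist.} To see $L(\Lambda)$ generates $B^+(\infty,\infty,r)$, it suffices to show every generator $t_i$ and $s_j$ divides $\Lambda$ on the left. For the $s_j$ this is immediate since $A=s_r\cdots s_3$ is a left factor of $\Lambda$ and the braid relations let me reorder to expose any single $s_j$. For the $t_i$, I would use relation $(Q_1)$, which makes all products $t_it_{i-1}$ equal, to rewrite the leftmost factor $t_1t_0$ as $t_it_{i-1}$ for any $i$, thereby exhibiting each $t_i$ as a left divisor. The remaining task is to invoke Lemma~\ref{lem:Deh03}(ii),(iii): I must produce the sets $S'$ and $S''$ closed under the lcm-complement operation. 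The natural candidate is to take $S'=S''=L(\Lambda)$ itself (the simple elements), and verify that any two of them admit a common right (resp.\ left) multiple inside this set---equivalently that $\Lambda$ is a common multiple of all generators and the complements are again divisors of $\Lambda$.

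\textbf{The main obstacle.} The hardest step will be the explicit conjugation computation showing $\Lambda(T\cup S)\Lambda^{-1}=T\cup S$, because $\Lambda$ interleaves the $s_j$-part $A$ with the $t_1t_0$-part across $r-1$ cyclic factors, and the shift on the $t_i$-indices must be tracked carefully through relations $(Q_1)$ and $(Q_2)$. I expect that the centrality of $\Delta_{\mathbf{B}_{r-1}}$ in $B^+(\mathbf{B}_{r-1})$ will do most of the bookkeeping for the $s_j$ and for the block $t_1t_0=\psi(b_1)$, reducing the genuinely new content to understanding how $\Lambda$ permutes the individual $t_i$ (rather than just the product $t_1t_0$). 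This is where the extra generators outside the image of $\psi$ force an honest calculation rather than a formal transfer.
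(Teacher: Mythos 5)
Your outline coincides with the paper's own proof in structure: show that conjugation by $\Lambda$ acts on the generating set as a power of the shift $\tau$ (it is exactly $\tau^{r}$, i.e.\ $t_i\mapsto t_{i+r}$ and $s_j\mapsto s_j$, with no flip of the $\mathbf{A}_{r-2}$ subdiagram), deduce $L(\Lambda)=R(\Lambda)$ from cancellativity and the standard lemma of \cite{CP11} (Lemma~\ref{lem:CP}), and obtain generation from the fact that every $b_i$ left-divides the central element $\Delta_{{\mathbf{B}}_{r-1}}$. But the central step is left unproved, and the route you sketch for it would fail: conjugation by a \emph{single} factor $At_1t_0$ does not permute the generators. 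From the identities $t_i(At_i)\doteq (At_i)s_3$ and $s_j(At_i)\doteq (At_i)s_{j+1}$ one sees that conjugation by $At_i$ sends $t_i$ to $s_3$ and shifts the $s_j$ upward off the end of the diagram, while $t_k$ for $k\neq i$ is not sent to a generator at all; the shift $t_i\mapsto t_{i+r}$ only materializes for the full $(r-1)$-fold product. The device that makes the computation tractable --- and which is the real technical content of the paper's proof --- is the alternative factorization $\Lambda\doteq (At_r)(At_{r-1})\cdots(At_1)$, proved by induction from those two identities; combined with $\tau^k(\Lambda)\doteq\Lambda$ it gives $(At_r)\Lambda\doteq\Lambda(At_0)\doteq A\Lambda t_0$, hence $t_r\Lambda\doteq\Lambda t_0$ by cancellation, and then $t_{i+r}\Lambda\doteq\Lambda t_i$ by applying $\tau^i$. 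Without this factorization your ``honest calculation'' has no handle.

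There is also a smaller but genuine error in the generation step: for $j<r$, the generator $s_j$ is \emph{not} a left divisor of $A=s_r s_{r-1}\cdots s_3$ (this word is rigid in the braid monoid, as consecutive letters do not commute, so its only one-letter left divisor is $s_r$), and $t_1t_0$ is not literally the leftmost factor of $\Lambda$; so ``reordering $A$'' and ``rewriting the leftmost $t_1t_0$'' are not available. The statements you want are nonetheless true, and your own setup supplies the correct justification: $b_{j-1}$ and $b_1$ left-divide $\Delta_{{\mathbf{B}}_{r-1}}$ in $B^+({\mathbf{B}}_{r-1})$, left-divisibility is preserved by the monoid homomorphism $\psi$, so each $s_j$ and $t_1t_0$ left-divide $\Lambda$, and then each $t_i$ left-divides $t_it_{i-1}\doteq t_1t_0$ by $(Q_1)$.
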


We give the proof of the above proposition in \S\ref{ssec:GarsideElt}.

The set $L(\Lambda)=R(\Lambda)$ meets the needs of $S'$ and $S''$ in Lemma~\ref{lem:Deh03}.
Therefore $B^+(\infty,\infty,r)$ admits lcm's.
It is easy to see that if a cancellative monoid admits
lcm's then it admits gcd's.
(For example, see Lemma 2.23 in~\cite{DDGKM14}).

So far, we have shown that the monoid $B^+(\infty,\infty,r)$ satisfies all the
conditions in the definition of quasi-Garside monoids.

\begin{theorem}\label{thm:Garside-infty}
The presentation for $B(\infty,\infty,r)$ in Theorem~\ref{binftyrpresthm}
gives rise to a quasi-Garside structure,
where $B^+(\infty,\infty,r)$ is the quasi-Garside monoid and $\Lambda$ is a Garside element.
\end{theorem}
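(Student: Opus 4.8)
The plan is to assemble the four defining properties of a quasi-Garside monoid for $B^+(\infty,\infty,r)$, drawing on the facts already established in this subsection, and then to pass to the group of fractions. The only genuinely new observation required is that the single identity $L(\Lambda)=R(\Lambda)$ from Proposition~\ref{GarsideElt} simultaneously supplies both the lattice property and the Garside element, so that the proof is essentially bookkeeping against the definition of quasi-Garside monoid.

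First I would dispatch atomicity and cancellativity. Atomicity (condition (i)) is immediate: the presentation in Theorem~\ref{binftyrpresthm} is positive and homogeneous, so word length is an invariant of each element and bounds the number of factors in any product. Cancellativity (condition (ii)) is precisely Corollary~\ref{cor:cancel}, which was obtained from the presentation being complemented and complete via Lemma~\ref{lem:Deh03}(1).

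Next I would establish the lattice property (condition (iii)). Since the presentation is complemented and complete, Lemma~\ref{lem:Deh03}(2,3) applies once suitable sets $S'$ and $S''$ are produced. I would take $S'=S''=L(\Lambda)=R(\Lambda)$; by Proposition~\ref{GarsideElt} this common set contains the generating set $S$ and lies in $S^*$. To check the closure hypothesis of Lemma~\ref{lem:Deh03}(2), given $u,v\in L(\Lambda)$ I write $\Lambda=u\alpha=v\beta$; then $\alpha,\beta$ are right divisors of $\Lambda$, so $\alpha,\beta\in R(\Lambda)=L(\Lambda)$, and $u\alpha=v\beta$ is the desired common right multiple lying in $S'$. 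The hypothesis of Lemma~\ref{lem:Deh03}(3) is verified symmetrically, again using $L(\Lambda)=R(\Lambda)$. Hence $B^+(\infty,\infty,r)$ admits both left and right lcm's; being cancellative, it then admits gcd's as well, so it is a lattice under each of the divisibility orders.

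Finally, condition (iv)---the existence of a Garside element---is exactly Proposition~\ref{GarsideElt}: the sets $L(\Lambda)$ and $R(\Lambda)$ coincide and this common set generates the monoid. With all four conditions verified, $B^+(\infty,\infty,r)$ is a quasi-Garside monoid with Garside element $\Lambda$. As recorded after the definition, a quasi-Garside monoid satisfies Ore's conditions and embeds in its group of fractions; since that group of fractions is the group presented by the same relations, namely $B(\infty,\infty,r)$, the pair $(B^+(\infty,\infty,r),\Lambda)$ is a quasi-Garside structure on $B(\infty,\infty,r)$, as claimed. At this stage I would expect no real obstacle: the substantive work resides in the completeness check (the cube condition, imported from~\cite{CP11}) and in Proposition~\ref{GarsideElt}, both of which may be assumed here. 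The one point deserving care is confirming that $L(\Lambda)=R(\Lambda)$ genuinely furnishes the closure conditions of Lemma~\ref{lem:Deh03}, but as sketched above this follows immediately from every divisor of $\Lambda$ being at once a left and a right divisor.
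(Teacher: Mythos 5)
Your proof is correct and follows essentially the same route as the paper: atomicity from homogeneity, cancellativity from Corollary~\ref{cor:cancel}, the lattice property via Lemma~\ref{lem:Deh03}(2,3) with $S'=S''=L(\Lambda)=R(\Lambda)$, and the Garside element from Proposition~\ref{GarsideElt}. Your explicit verification that $L(\Lambda)=R(\Lambda)$ satisfies the closure hypotheses of Lemma~\ref{lem:Deh03} is a detail the paper merely asserts, and it is carried out correctly.
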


We will see in \S\ref{ssec:Embed_Bdeer_Br}
that the braid group  $B(\infty,\infty,r)$ is isomorphic to
the affine braid group  $B({\widetilde{\mathbf A}_{r-1}}
)$.
In~\cite{Dig06}, Digne proposed a dual presentation of $B({\widetilde{\mathbf A}_{r-1}}
)$
which gives a quasi-Garside structure.
This is different from the quasi-Garside structure on $B({\widetilde{\mathbf A}_{r-1}}
)\cong B(\infty,\infty,r)$
given in Theorem~\ref{thm:Garside-infty}.

\subsubsection{Garside structure on $B(de,e,r)$}

Recall from Corollary~\ref{bdeerthm} that $B(de,e,r) \cong C^e_\infty \ltimes B(\infty,\infty,r)$
where $C^e_\infty=\langle  z \rangle $ is an infinite cycle group.
From the presentation for $B(de, e,r)$ in Theorem~\ref{new-pre-deer},
$C^e_\infty$ acts on $B^+(\infty,\infty,r)$ by $zt_iz^{-1}=t_{i-e}$ and
$zs_jz^{-1}=s_j$ for $i\in{\mathbb Z}$ and $3\le j\le r$.

From \S\ref{ssec:Garside_Binfty}, $(B(\infty,\infty,r), B^+(\infty,\infty,r), \Lambda)$
is a quasi-Garside group.
On the other hand, $(C^e_\infty, (C^e_\infty)^+, z)$ is a Garside group, where
$(C^e_\infty)^+=\{\, z^n \mid n\ge 0\, \}$.

Picantin~\cite{Pic01} showed that the crossed product of Garside monoids
is a Garside monoid.
For semidirect products which are a special case of crossed products,
it was directly proved in~\cite{Lee07}.

\begin{lemma}[{\cite[Theorem 4.1]{Lee07}}]\label{thm:Lee07}
Let $(G, G^+, \Delta_G)$ and $(H, H^+, \Delta_H)$ be quasi-Garside groups.
If $\rho$ is an action of $G$ on $H^+$  and $\Delta_H$ is fixed under $\rho$,
then $(G \ltimes_\rho H, G^+\ltimes_\rho H^+, (\Delta_G, \Delta_H))$ is a quasi-Garside group.
\end{lemma}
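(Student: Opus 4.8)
The plan is to verify directly the four defining conditions for the monoid $M := G^{+}\ltimes_{\rho}H^{+}$, whose elements I write as pairs $(g,h)$ with $g\in G^{+}$, $h\in H^{+}$ and whose product is $(g_{1},h_{1})(g_{2},h_{2})=(g_{1}g_{2},\,\rho(g_{2}^{-1})(h_{1})\,h_{2})$; here each $\rho(g)$ is a monoid automorphism of $H^{+}$, so the right-hand side indeed lies in $M$. Atomicity is immediate: setting $\nu(g,h)=\nu_{G}(g)+\nu_{H}(h)$ for the atomic norms of $G^{+}$ and $H^{+}$, and using that an automorphism preserves $\nu_{H}$, one finds $\nu$ superadditive and strictly positive off the identity, which bounds the number of factors. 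Cancellativity follows by projecting the first coordinate to cancel in $G^{+}$ and then cancelling in $H^{+}$ (for the right version one first applies the bijection $\rho(\cdot)$); this uses only cancellativity of $G^{+},H^{+}$ and invertibility of $\rho$.

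The heart of the argument is the lattice condition, and the obstacle is that left and right divisibility look quite different in these coordinates. For right divisibility a direct computation gives the clean equivalence $(g_{1},h_{1})\preceq_{\mathrm R}(g,h)\iff g_{1}\preceq_{\mathrm R}g \text{ and } h_{1}\preceq_{\mathrm R}h$, so $(M,\preceq_{\mathrm R})$ is isomorphic to the product poset $(G^{+},\preceq_{\mathrm R})\times(H^{+},\preceq_{\mathrm R})$, hence a lattice with componentwise meets and joins. Left divisibility is twisted: $(g_{1},h_{1})\preceq_{\mathrm L}(g,h)$ iff $g_{1}\preceq_{\mathrm L}g$ and $\rho(g^{-1}g_{1})(h_{1})\preceq_{\mathrm L}h$. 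I would untwist it with the set bijection $\Phi\colon M\to G^{+}\times H^{+}$, $\Phi(g,h)=(g,\rho(g)(h))$; since each $\rho(g)$ is an order automorphism of $(H^{+},\preceq_{\mathrm L})$, a short computation (apply $\rho(g)$ to both sides) shows $\Phi$ carries $\preceq_{\mathrm L}$ to the product of the left orders. Thus $(M,\preceq_{\mathrm L})$ is also a lattice. This $\Phi$ is the main device; the rest is bookkeeping.

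It remains to check that $\Delta:=(\Delta_{G},\Delta_{H})$ is a Garside element, and this is exactly where the hypothesis $\rho(\Delta_{H})=\Delta_{H}$ enters. From the product description, $R(\Delta)=R(\Delta_{G})\times R(\Delta_{H})$ outright. For $L(\Delta)$ the condition is $\rho(\Delta_{G}^{-1}g_{1})(h_{1})\preceq_{\mathrm L}\Delta_{H}$; because $\Delta_{H}$ is fixed by the whole $G$-action, the automorphism $\rho(\Delta_{G}^{-1}g_{1})$ permutes the left divisors of $\Delta_{H}$, so this reduces to $h_{1}\preceq_{\mathrm L}\Delta_{H}$, whence $L(\Delta)=L(\Delta_{G})\times L(\Delta_{H})$. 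Since $L(\Delta_{G})=R(\Delta_{G})$ and $L(\Delta_{H})=R(\Delta_{H})$ by hypothesis, we get $L(\Delta)=R(\Delta)$; and this set generates $M$ because $L(\Delta_{G})\times\{1\}$ and $\{1\}\times L(\Delta_{H})$ generate the two factors while $(g,1)(1,h)=(g,h)$.

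Finally I would note that $M$ satisfies Ore's conditions (atomic, cancellative, a lattice), hence embeds in its group of fractions, which the semidirect relations identify with $G\ltimes_{\rho}H$; thus $(G\ltimes_{\rho}H,\,M,\,\Delta)$ is a quasi-Garside group, and honestly Garside precisely when $\Delta_{G}$ and $\Delta_{H}$ have finitely many divisors. The one genuine subtlety, as flagged, is the left-divisibility lattice, resolved by the untwisting bijection $\Phi$; everything else is a routine transfer of the component structures through the twisted product.
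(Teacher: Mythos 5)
Your proof is correct, and it is worth pointing out that the paper itself does not prove this lemma at all: it cites Theorem 4.1 of [Lee07] and merely remarks that the proof given there never uses finiteness of the divisor sets of $\Delta_G$ or $\Delta_H$, so the statement transfers to the quasi-Garside setting. What you supply is therefore a genuinely self-contained verification, and every essential step checks out: the superadditive norm $\nu_G+\nu_H$ gives atomicity because each $\rho(g)$, being a monoid automorphism of $H^+$, preserves the atomic norm; cancellativity reduces coordinatewise; the right-divisibility order on $G^+\ltimes_\rho H^+$ is literally the product order, while the left-divisibility order becomes the product order after your untwisting bijection $\Phi(g,h)=(g,\rho(g)(h))$, using $\rho(g)\circ\rho(g^{-1}g_1)=\rho(g_1)$; and the hypothesis that $\Delta_H$ is fixed under $\rho$ enters exactly where you place it, namely to show that $\rho(\Delta_G^{-1}g_1)$ permutes the left divisors of $\Delta_H$, so that $L(\Delta)=L(\Delta_G)\times L(\Delta_H)=R(\Delta_G)\times R(\Delta_H)=R(\Delta)$. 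Two points you compress but which are routine: one should note $1\in L(\Delta_G)\cap L(\Delta_H)$ so that $L(\Delta_G)\times\{1\}$ and $\{1\}\times L(\Delta_H)$ really sit inside $L(\Delta)$, and the identification of the group of fractions of $G^+\ltimes_\rho H^+$ with $G\ltimes_\rho H$ rests on uniqueness of the Ore localization together with the observation that every $(g,h)$ with $g\in G$, $h\in H$ is a product of fractions such as $(a,1)^{-1}(b,1)(1,c)^{-1}(1,d)$ with $a,b\in G^+$ and $c,d\in H^+$. Your closing remark that the structure is honestly Garside exactly when $\Delta_G$ and $\Delta_H$ have finitely many divisors agrees with the paper's convention for dropping the prefix ``quasi''.
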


Theorem 4.1 in \cite{Lee07} is indeed stated not for quasi-Garisde groups
but for Garside groups,
but its proof does not use finiteness of divisors of
$\Delta_G$ or $\Delta_H$.
Hence the above lemma is true.

Since $z\Lambda=\Lambda z$, the Garside element $\Lambda$ of $B^+(\infty,\infty,r)$
is fixed under the action of $C^e_\infty$.
Applying Lemma~\ref{thm:Lee07}, we have the following result.

\begin{theorem}
The presentation for $B(de,e,r)$ in Theorem~\ref{new-pre-deer} gives rise to a quasi-Garside structure
on the imprimitive braid group  $B(de,e,r) \cong C^e_\infty \ltimes B(\infty,\infty,r)$, where
$B^+(de,e,r) \cong (C^e_\infty)^+ \ltimes B^+(\infty,\infty,r)$ is
the quasi-Garside monoid and $z\Lambda$ is a Garside element.
\end{theorem}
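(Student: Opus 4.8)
The plan is to deduce this theorem as a direct application of Lemma~\ref{thm:Lee07} to the semidirect-product decomposition $B(de,e,r)\cong C^e_\infty\ltimes B(\infty,\infty,r)$ furnished by Corollary~\ref{bdeerthm}. First I would assemble the two quasi-Garside ingredients. On one side, $(B(\infty,\infty,r),B^+(\infty,\infty,r),\Lambda)$ is a quasi-Garside group by Theorem~\ref{thm:Garside-infty}. On the other side, $C^e_\infty=\langle z\rangle$ is infinite cyclic, so $(C^e_\infty)^+=\{z^n\mid n\ge 0\}$ is an atomic, cancellative monoid whose left- and right-divisibility orders coincide with the total order of $\mathbb{N}$, hence a lattice; moreover $z$ has divisor set $\{1,z\}$, which generates the monoid. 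Thus $(C^e_\infty,(C^e_\infty)^+,z)$ is a (quasi-)Garside group.

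Next I would pin down the action and verify the single nontrivial hypothesis of Lemma~\ref{thm:Lee07}. By Theorem~\ref{new-pre-deer}, $C^e_\infty$ acts on $B^+(\infty,\infty,r)$ by $z\,t_i\,z^{-1}=t_{i-e}$ and $z\,s_j\,z^{-1}=s_j$; this is the $e$-fold iterate $\tau^{-e}$ of the diagram automorphism $\tau$, so it is a genuine monoid automorphism and therefore an action $\rho$ of $C^e_\infty$ on $H^+=B^+(\infty,\infty,r)$. The hypothesis that must be checked is that the Garside element $\Lambda$ is fixed by $\rho$. Using the factorization $\Lambda=(A\,t_1t_0)^{r-1}$ with $A=s_r s_{r-1}\cdots s_3$, I would compute $z\cdot A=A$ since each $s_j$ is fixed, and $z\cdot(t_1t_0)=t_{1-e}t_{-e}=t_1t_0$ by relation $(Q_1)$, whence $z\cdot\Lambda=\Lambda$; this is precisely the commutation $z\Lambda=\Lambda z$ already observed. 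With both factors quasi-Garside and $\Delta_H=\Lambda$ fixed under $\rho$, Lemma~\ref{thm:Lee07} then gives that $\bigl(C^e_\infty\ltimes_\rho B(\infty,\infty,r),\ (C^e_\infty)^+\ltimes_\rho B^+(\infty,\infty,r),\ (z,\Lambda)\bigr)$ is a quasi-Garside group. Transporting along the isomorphism of Corollary~\ref{bdeerthm}, under which $(z,\Lambda)=(z,1)(1,\Lambda)$ corresponds to the product $z\Lambda$, yields exactly the claimed quasi-Garside structure on $B(de,e,r)$, with monoid $B^+(de,e,r)\cong(C^e_\infty)^+\ltimes B^+(\infty,\infty,r)$ and Garside element $z\Lambda$.

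I do not expect a serious obstacle, since essentially all the work has been front-loaded into Theorem~\ref{thm:Garside-infty} and Lemma~\ref{thm:Lee07}, and the only genuine computation is the fixedness of $\Lambda$, which collapses to the single relation $(Q_1)$. The one point that deserves explicit care---and which the authors already flag---is that \cite[Theorem 4.1]{Lee07} is stated for Garside groups, with finite divisor sets, whereas $B^+(\infty,\infty,r)$ is only quasi-Garside, having infinitely many simple elements; I would confirm that its proof nowhere invokes finiteness of the divisors of $\Delta_G$ or $\Delta_H$, so that it applies verbatim in the present quasi-Garside setting.
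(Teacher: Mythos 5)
Your proposal is correct and follows essentially the same route as the paper: both deduce the theorem from Lemma~\ref{thm:Lee07} applied to the decomposition $B(de,e,r)\cong C^e_\infty\ltimes B(\infty,\infty,r)$, with the only substantive check being that $\Lambda$ is fixed by the $z$-action (which the paper records as $z\Lambda=\Lambda z$ and you verify via $(Q_1)$), and both flag that \cite[Theorem 4.1]{Lee07} applies because its proof never uses finiteness of the divisor sets.
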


Indeed, $z^p \Lambda^q$ is a Garside element for any positive exponents $p$ and $q$.
A better choice would be $z^{\frac{r}{e\wedge r}}\Lambda^{\frac{e}{e\wedge r}}$
because it is the generator of the center of $B(de,e,r)$.
Similarly $z^r\Lambda^e$ is also a good choice.
However, if $p/q\ne r/e$, then $z^p\Lambda^q$ does not have a central power.

\subsubsection{Proof of Proposition~\ref{GarsideElt}}
\label{ssec:GarsideElt}
When $r=2$, the assertion is obvious.
Hence we assume $r\ge 3$.

The  properties required for the element $\Lambda$
to be a Garside element of $B^+(\infty, \infty, r)$ are largely inherited
from the Garside element $\Delta_{{\mathbf{B}}_{r-1}}$
of $B^+({\mathbf{B}}_{r-1})$.

Let ``$\doteq$'' denote the equivalence in each of the positive monoids
$B^+({\mathbf{B}}_{r-1})$ and $B^+(\infty,\infty,r)$.

Notice that $\tau:B(\infty,\infty,r)\to B(\infty,\infty,r)$, defined by
$\tau(t_i)=t_{i+1}$ and $\tau(s_j)=s_j$ for $i\in{\mathbb Z}$ and $3\le j\le r$,
induces an automorphism of the monoid $B^+(\infty,\infty,r)$.
Then for all $k\in{\mathbb Z}$
$$\tau^k(\Lambda)\doteq \Lambda$$
because $\tau^k(A)=A$ and $\tau^k(t_1t_0)=t_{k+1}t_k\doteq t_1t_0$.

Since $\Delta_{{\mathbf{B}}_{r-1}}$ is central in $B^+({\mathbf{B}}_{r-1})$,
$\Delta_{{\mathbf{B}}_{r-1}} b_i \doteq  b_i\Delta_{{\mathbf{B}}_{r-1}}$  for all $1\le i\le r-1$.
Therefore
\begin{eqnarray*}
&& \Lambda t_1t_0 \doteq t_1t_0\Lambda,\\
&& \Lambda s_j \doteq s_j \Lambda \qquad\mbox{for } 3\le j\le r.
\end{eqnarray*}

\begin{lemma}
$\Lambda\doteq (At_r)(At_{r-1})\cdots (At_1)$.
\end{lemma}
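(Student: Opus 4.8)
The plan is to prove the identity by explicitly rewriting the word $(At_1t_0)^{r-1}$, using only $(Q_1)$, $(Q_2)$, $(Q_3)$ and the braid relations on $S$, and to reduce it to a purely combinatorial reorganization of the $S$-generators. First I would dispose of the small cases: for $r=2$ the claim is the definition ($A$ empty), and for $r=3$ there is a three-line computation,
\[
(s_3t_1t_0)^2 \stackrel{Q_1}{\doteq} s_3t_3t_2s_3t_2t_1 \stackrel{Q_2}{\doteq} s_3t_3s_3t_2s_3t_1 = (At_3)(At_2)(At_1),
\]
where $(Q_1)$ relabels the two $t$-blocks and $(Q_2)$ merges the central $t_2s_3t_2\doteq s_3t_2s_3$.

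For general $r$ the first step is the same relabeling: apply $(Q_1)$ to replace the $k$-th factor $t_1t_0$ (from the left, $1\le k\le r-1$) by $t_{r-k+1}t_{r-k}$, giving
\[
\Lambda \doteq (At_rt_{r-1})(At_{r-1}t_{r-2})\cdots(At_2t_1),
\]
so that adjacent blocks share a $t$-index. I would then isolate two auxiliary identities with immediate proofs. The \emph{junction identity} $t_jAt_j \doteq At_js_3$ holds because $t_j$ commutes with $s_4,\dots,s_r$ by $(Q_3)$ and then $t_js_3t_j\doteq s_3t_js_3$ by $(Q_2)$. The \emph{sliding identity} $s_jA \doteq As_{j+1}$ (for $3\le j\le r-1$) follows from $s_js_{j+1}s_j\doteq s_{j+1}s_js_{j+1}$ together with the commutations among the $s$'s. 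Applying the junction identity at the $r-2$ interior junctions of the relabeled word peels off the leading factor and yields
\[
\Lambda \doteq (At_r)\,(At_{r-1}s_3)(At_{r-2}s_3)\cdots(At_2s_3)\,t_1.
\]
This reduces the lemma to the purely "$s_3$-moving" identity
\[
(At_{r-1}s_3)(At_{r-2}s_3)\cdots(At_2s_3)\,t_1 \doteq (At_{r-1})(At_{r-2})\cdots(At_2)(At_1).
\]

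The heart of the argument, and the step I expect to be the main obstacle, is this last identity. A generator count shows precisely what must happen: the left side carries $r-3$ \emph{interior} copies of $s_3$ (the final $s_3$, sitting directly before $t_1$, already supplies the last $s_3$ of the block $At_1=s_r\cdots s_4s_3\,t_1$), and these must be converted into the string $s_r s_{r-1}\cdots s_4$ that completes the extra factor $A$ on the right. The mechanism is to push each interior $s_3$ rightward through the $A$'s by repeated use of $s_jA\doteq As_{j+1}$ (its index incrementing by one at each passage), sliding it freely past the intervening $t$'s by $(Q_3)$, until the $r-3$ displaced generators assemble, in the correct order, immediately in front of the final $s_3t_1$. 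For $r=4$ this is a single use of $s_3A\doteq As_4$ followed by one commutation, after which the substring $s_4s_3=A$ completes the block $At_1$; for larger $r$ several interior $s_3$'s must be funneled to the tail simultaneously, and tracking this "sorting" is where the genuine bookkeeping lies. I would organize it as an induction on $r$ (equivalently on the number of interior $s_3$'s), at each stage moving the rightmost interior $s_3$ into position to manufacture one more letter of the missing $A$, the commutation of the freshly produced $s$-generator past the remaining $t$'s being what keeps the reduction clean.
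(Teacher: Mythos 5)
Your argument is correct and is essentially the paper's own proof: both rest on the same two local identities, $t_j(At_j)\doteq (At_j)s_3$ (your junction identity) and $s_j(At_i)\doteq (At_i)s_{j+1}$ (your sliding identity combined with $(Q_3)$), together with the $(Q_1)$-relabeling of the factors $t_1t_0$. The only difference is bookkeeping: the paper inducts on the exponent $k$ in $(At_1t_0)^k$, relabeling one factor at a time and pushing the newly created $t_{k+1}$ all the way to the right in a single pass where it transmutes into $s_{k+2}$, whereas you relabel all factors at once, convert every junction to an interleaved $s_3$, and then sort those $s_3$'s rightward --- the two computations are letter-for-letter the same.
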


\begin{proof}
When $r=3$, the assertion is true because
$$
\Lambda
= (s_3t_1t_0)^2 = s_3t_1t_0s_3t_1t_0
\doteq s_3t_3t_2s_3t_2t_1\\
\doteq s_3t_3s_3t_2s_3t_1= (At_3)(At_2)(At_1).
$$
Hence we assume $r\ge 4$.

Notice that the following identities hold:
\begin{eqnarray*}
&& t_i(At_i)\doteq (At_i)s_3  \qquad\mbox{for $i\in{\mathbb Z}$},\\
&& s_j(At_i)\doteq(At_i)s_{j+1}  \qquad \mbox{for $i\in{\mathbb Z}$ and $3\le j\le r-1$.}
\end{eqnarray*}
One can prove it directly using the defining relations, or using the fact that
$\{ t_i, s_3,\ldots, s_r\}$ for any $i\in{\mathbb Z}$
satisfies the braid relations described by the
following diagram.
$$
\begin{xy}
(10,0) *++={\rule{0pt}{4pt}} *\frm{o};
(20,0) *++={\rule{0pt}{4pt}} *\frm{o}**@{-};
(30,0) *++={\rule{0pt}{4pt}} *\frm{o}**@{-};
(40, 0) *++={\dots}  **@{-};
(50, 0) *++={\rule{0pt}{4pt}} *\frm{o} **@{-};
(13,-3) *++={t_i};
(23,-3) *++={s_3};
(33,-3) *++={s_4};
(53,-3) *++={s_r}
\end{xy}
$$
By moving $t_k$ from the left to the right using the above identities, we have
\begin{equation}\label{eq:move_t_k}
t_k (At_k)(At_{k-1})\cdots (At_2)\doteq (At_k)(At_{k-1})\cdots (At_2) s_{k+1}
\qquad\text{for } 2\le k\le r-1.
\end{equation}
Now, we claim that the following identity holds.
\begin{equation}\label{eqn:pow}
(At_1t_0)^k\doteq (At_{k+1})(At_k)\cdots (At_2)\, (s_{k+1}\cdots s_3)\, t_1
\qquad\text{for } 1\le k\le r-1.
\end{equation}
The above equality is obvious for $k=1$.
Using induction on $k$, assume that \eqref{eqn:pow} is true for some $k$
with $1\le k\le r-2$.
Then by~\eqref{eq:move_t_k}
\begin{align*}
(At_1t_0)^{k+1}
&= (At_1t_0) \, (At_1t_0)^k\\
&\doteq A t_{k+2}t_{k+1} \, (At_{k+1}) (At_k)\cdots (At_2)\, (s_{k+1}\cdots s_3)\, t_1\\
&\doteq At_{k+2}\,  (At_{k+1})(At_k)\cdots (At_2)\,  s_{k+2}\, (s_{k+1}\cdots s_3)\,  t_1.
\end{align*}
This shows that \eqref{eqn:pow} is true.

Putting $k=r-1$ to \eqref{eqn:pow}, we obtain
$\Lambda\doteq (At_r)(At_{r-1})\cdots (At_1)$.
\end{proof}

\begin{lemma}\label{lem:Lambda_balanced}
$\Lambda g\doteq \tau^r(g)\Lambda$ for all $g\in B^+(\infty,\infty,r)$.
\end{lemma}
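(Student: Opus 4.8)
The plan is to prove the identity on generators and then extend it multiplicatively. If $\Lambda g\doteq\tau^r(g)\Lambda$ and $\Lambda h\doteq\tau^r(h)\Lambda$, then
$$\Lambda(gh)=(\Lambda g)h\doteq\tau^r(g)(\Lambda h)\doteq\tau^r(g)\tau^r(h)\Lambda=\tau^r(gh)\Lambda,$$
so the set of $g$ satisfying the identity is a submonoid of $B^+(\infty,\infty,r)$; hence it suffices to treat the generators $t_i$ ($i\in{\mathbb Z}$) and $s_j$ ($3\le j\le r$). For the generators $s_j$ there is nothing to do, since we have already established $\Lambda s_j\doteq s_j\Lambda$ and $\tau^r(s_j)=s_j$.

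For the generators $t_i$ the goal is $\Lambda t_i\doteq t_{i+r}\Lambda$, and here I would first reduce to a single index. Because $\tau$ is a monoid automorphism and $\tau^k(\Lambda)\doteq\Lambda$ for all $k\in{\mathbb Z}$, applying $\tau^i$ to the single identity $\Lambda t_0\doteq t_r\Lambda$ gives
$$\Lambda t_i\doteq\tau^i(\Lambda)\,t_i\doteq t_{i+r}\,\tau^i(\Lambda)\doteq t_{i+r}\Lambda.$$
Thus the whole lemma comes down to the single case $\Lambda t_0\doteq t_r\Lambda$.

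To establish $\Lambda t_0\doteq t_r\Lambda$, I would use the factorization $\Lambda\doteq(At_r)(At_{r-1})\cdots(At_1)$ proved above and push a $t$ across $\Lambda$ by means of the two identities $t_i(At_i)\doteq(At_i)s_3$ and $s_j(At_i)\doteq(At_i)s_{j+1}$ for $3\le j\le r-1$. Starting from $t_r\Lambda\doteq t_r(At_r)(At_{r-1})\cdots(At_1)$, the first identity converts the leading $t_r$ into an $s_3$ lying to the right of $(At_r)$, and the second transports this factor rightward across $(At_{r-1}),\ldots,(At_3)$, raising its index by one at each crossing, so that
$$t_r\Lambda\doteq (At_r)(At_{r-1})\cdots(At_3)\,s_r\,(At_2)(At_1).$$
At this point the generic transport relation is no longer available, since $s_r(At_i)$ does not reduce, and the proof reduces to the single boundary identity
$$s_r\,(At_2)(At_1)\doteq(At_2)(At_1)\,t_0,$$
after which $t_r\Lambda\doteq(At_r)\cdots(At_1)\,t_0\doteq\Lambda t_0$ as required.

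Proving this boundary identity is the main obstacle, precisely because the index $s_r$ that falls off the end of the chain is the one for which the clean transport relation fails; it must be handled by a direct computation in $(Q_1)$, $(Q_2)$, $(Q_3)$ together with the braid relations on $S$. The mechanism is already visible in the smallest cases. For $r=3$ one computes
$$s_3s_3t_2s_3t_1\doteq s_3(t_2s_3t_2)t_1\doteq s_3t_2s_3t_2t_1\doteq s_3t_2s_3t_1t_0,$$
using $(Q_2)$ and then $(Q_1)$; for $r=4$ one commutes $t_2$ past $s_4$ via $(Q_3)$, applies the braid relation $s_3s_4s_3=s_4s_3s_4$, and finishes with $(Q_2)$ and $(Q_1)$ in the same way, both sides collapsing to $s_3s_4s_3s_3\,t_2s_3t_1$. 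In general I would move the interior $t$'s past the high-index generators $s_j$ ($j\ge4$) using $(Q_3)$, collapse the resulting word in $S$ with the braid relations, and then use $(Q_2)$ to produce the factor $t_2s_3t_2\doteq s_3t_2s_3$ and $(Q_1)$ to rewrite $t_1t_0$ as $t_2t_1$, which is exactly what is needed to match the two sides.
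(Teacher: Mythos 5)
Your reductions are all sound and match the paper's up to the point where everything hinges on the single identity $\Lambda t_0\doteq t_r\Lambda$: the multiplicative extension to all of $B^+(\infty,\infty,r)$, the use of $\Lambda s_j\doteq s_j\Lambda$, and the $\tau$-equivariance argument reducing to $i=0$ are all correct. Your transport computation is also correct: iterating $t_r(At_r)\doteq(At_r)s_3$ and $s_j(At_i)\doteq(At_i)s_{j+1}$ for $3\le j\le r-1$ does yield $t_r\Lambda\doteq(At_r)\cdots(At_3)\,s_r\,(At_2)(At_1)$. The gap is exactly where you locate it: the boundary identity $s_r(At_2)(At_1)\doteq(At_2)(At_1)t_0$ is proved only for $r=3,4$, and the general case is left as a sketch. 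After pushing $t_2$ past $s_r,\ldots,s_4$ and using $(Q_1)$, $(Q_2)$ on the right-hand side as you describe, both sides take the form $w\cdot t_2s_3t_1$ with $w$ a positive word in $s_3,\ldots,s_r$, and what remains is the identity
$$
s_r\,A\,(s_r\cdots s_4)\;\doteq\;A\,(s_r\cdots s_4)\,s_3\;=\;A^2
$$
in the braid monoid on $s_3,\ldots,s_r$ of type $\mathbf{A}_{r-2}$. This is true, but it is not a consequence of merely ``collapsing with the braid relations''; it needs its own induction (repeatedly applying $s_{j}s_{j+1}s_{j}=s_{j+1}s_{j}s_{j+1}$ and far commutations to migrate the extra $s_r$ down to an $s_3$ at the right end), and since it is the only genuinely nontrivial step of your whole argument, leaving it at the level of two worked examples is a real gap rather than a routine omission.

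It is worth noting that the paper avoids this computation entirely. Since $\tau^{-1}(\Lambda)\doteq\Lambda$, the factorization gives $\Lambda\doteq(At_{r-1})(At_{r-2})\cdots(At_0)$ as well, so the $(r+1)$-fold product $(At_r)(At_{r-1})\cdots(At_0)$ can be bracketed two ways to yield
$$
(At_r)\,\Lambda\;\doteq\;\Lambda\,(At_0)\;\doteq\;A\,\Lambda\,t_0,
$$
and left-cancelling $A$ (using Corollary~\ref{cor:cancel}) gives $t_r\Lambda\doteq\Lambda t_0$ at once. Your route, if the boundary identity were fully proved, would have the minor virtue of being a purely positive rewriting argument with no appeal to cancellativity; but as written, the regrouping trick is both shorter and complete, and I would either adopt it or supply the missing induction for the type-$\mathbf{A}$ identity displayed above.
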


\begin{proof}
Note that $\tau^r(t_i)=t_{i+r}$ for $i\in{\mathbb Z}$ and $\tau^r(s_j)=s_j$ for $3\le j\le r$.
Since $\Lambda s_j \doteq s_j \Lambda$ for $3\le j\le r$,
it suffices to show that $\Lambda t_i\doteq t_{i+r}\Lambda$ for all $i\in{\mathbb Z}$.

Since $\Lambda A\doteq A\Lambda$ and $\tau^k(\Lambda)\doteq \Lambda$ for $k\in{\mathbb Z}$,
$$
(At_r)\, \Lambda
\doteq (At_r)\, \tau^{-1}(\Lambda)
\doteq (At_r)\, (At_{r-1})\cdots (At_1) (At_0)
\doteq \Lambda (At_0)\doteq A\Lambda t_0.
$$
Hence $At_r\Lambda\doteq A\Lambda t_0$.
Because $B^+(\infty,\infty,r)$ is cancellative,
$$
t_r\Lambda\doteq \Lambda t_0.
$$
Applying $\tau^i$ to the above identity, we have
$t_{i+r}\Lambda\doteq \Lambda t_i$.
\end{proof}

In \cite[Proposition 3.5]{CP11}, the following was shown.

\begin{lemma}\label{lem:CP}
Let $M$ be a cancellative monoid and let $h\in M$.
If there is an automorphism $\phi$ of $M$ such that
$hg=\phi(g)h$ for all $g\in M$,
then the set of left divisors of $h$ is the same as the set of right divisors of $h$.
\end{lemma}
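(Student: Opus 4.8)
The plan is to use the hypothesis $hg=\phi(g)h$ directly, converting a one-sided factorization of $h$ into the opposite one by a single cancellation. The first step is to record the mirror form of the hypothesis. Since $\phi$ is an automorphism, it is bijective, so replacing $g$ by $\phi^{-1}(g)$ in $hg=\phi(g)h$ gives $h\phi^{-1}(g)=gh$, that is, $gh=h\phi^{-1}(g)$ for all $g\in M$. Thus $h$ intertwines left and right multiplication in both directions, which is precisely what is needed to swap the sides of a divisor.

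Next I would treat the two inclusions separately. Suppose first that $a$ is a left divisor of $h$, say $h=ab$ with $b\in M$. Applying the hypothesis with $g=b$ gives $hb=\phi(b)h$, and substituting $h=ab$ on both sides turns this into $ab\,b=\phi(b)\,ab$; right-cancelling $b$ (legitimate since $M$ is right-cancellative) yields $ab=\phi(b)a$, whence $h=\phi(b)a$ and $a$ is a right divisor of $h$. Symmetrically, suppose $a$ is a right divisor, say $h=ca$ with $c\in M$. Applying the mirror relation with $g=c$ gives $ch=h\phi^{-1}(c)$, and substituting $h=ca$ turns this into $c\,ca=ca\,\phi^{-1}(c)$; left-cancelling $c$ gives $ca=a\phi^{-1}(c)$, so that $h=a\phi^{-1}(c)$ and $a$ is a left divisor of $h$. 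Combining the two inclusions gives that the set of left divisors of $h$ coincides with the set of right divisors of $h$.

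I do not expect a serious obstacle here: the argument is purely formal, and the only points requiring care are the bijectivity of $\phi$ (used to pass to the mirror relation) and the two-sided cancellativity of $M$ (used for the single cancellation in each direction), both of which are assumed. In particular, no Garside-theoretic input is invoked, so the lemma applies verbatim to the intended case $h=\Lambda$ with $\phi=\tau^r$, the required balanced relation $\Lambda g\doteq\tau^r(g)\Lambda$ having been established in Lemma~\ref{lem:Lambda_balanced}; this is what will upgrade the equality $L(\Lambda)=R(\Lambda)$ needed in Proposition~\ref{GarsideElt}.
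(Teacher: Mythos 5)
Your proof is correct: the single application of $hg=\phi(g)h$ (resp.\ its mirror form $gh=h\phi^{-1}(g)$) followed by one right- (resp.\ left-) cancellation does exactly convert a left factorization of $h$ into a right one and vice versa, and both bijectivity of $\phi$ and two-sided cancellativity are used only where you say they are. Note that the paper itself does not prove this lemma but quotes it as Proposition~3.5 of \cite{CP11}, so there is no internal proof to compare against; your argument is the natural formal one and supplies what the citation leaves implicit, applying verbatim to $h=\Lambda$, $\phi=\tau^r$ as intended.
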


Now, we are ready to show that $\Lambda$ is a Garside element.

\begin{proof}[End of proof of Proposition~\ref{GarsideElt}]
Since each $b_i$ ($1\le i\le r-1$)
is a left divisor of $\Delta_{{\mathbf{B}}_{r-1}}$ in $B^+({\mathbf{B}}_{r-1})$,
$s_3,\ldots,s_r$ and $t_1t_0$ are left divisors of $\Lambda$ in $B^+(\infty, \infty, r)$.
Since each $t_i$ ($i \in{\mathbb Z}$) is a left divisor of $t_1t_0$, the set of left
divisors of $\Lambda$ contains $\{t_i\mid i\in{\mathbb Z}\}\cup \{s_3,\ldots,s_r\}$,
hence it generates $B^+(\infty,\infty,r)$.
By Corollary~\ref{cor:cancel} and Lemmas~\ref{lem:Lambda_balanced} and \ref{lem:CP},
the set of left divisors of $\Lambda$ equals the set of right
divisors of $\Lambda$.
Consequently, $\Lambda$ is a Garside element.
\end{proof}

\section{Geometric interpretation and Applications}
\label{sec:GeomBraid}

In this section, we explore some properties of $B(de,e,r)$
by using the interpretation of $B(de,e,r)$ as a geometric braid group.

\subsection{Interpretation as geometric braids on $r+1$ strings}
\label{ssec:Embed_Bdeer_Br}
In~\cite{BMR98}, Brou\'e, Malle and Rouquier constructed an isomorphism
from $B(d,1,r)$ to $B({\mathbf{B}}_r)$ and an embedding of $B(de,e,r)$ into $B({\mathbf{B}}_r)$.
This subsection begins with reviewing them in our setting.

Consider the braid group $B(d,1,r)$.
By putting $e=1$ to the relation $z t_i = t_{i-e}z$ of $B(de,e,r)$
in Theorem~\ref{new-pre-deer},
we have $z t_i = t_{i-1} z$ for $i\in{\mathbb Z}$, hence for every $i\in{\mathbb Z}$
$$
t_i=z^{-i}t_0z^i.
$$
Then it is straightforward to simplify the presentation of $B(d,1,r)$ in Theorem~\ref{new-pre-deer}
to the presentation illustrated by the diagram in Figure~\ref{fig:typeB}.
Notice that it is the same as the diagram for $B({\mathbf{B}}_r)$ in Figure~\ref{fig:typeAB}(b),
where $b_1$ and $b_2$ are replaced with $z$ and $t_0$, respectively,
and $b_i$ is replaced with $s_i$ for each $3\le i\le r$.
Therefore the groups $B(d,1,r)$ and  $B({\mathbf{B}}_r)$ are isomorphic
by the following map.
$$\begin{array}{rcll}
B(d,1,r) & \overset{\cong}{\longrightarrow} & B({\mathbf{B}}_r) & \\[1ex]
z & \mapsto & b_1 & \\[1ex]
t_i & \mapsto & b_1^{-i} b_2 b_1^i & (i\in{\mathbb Z}) \\[1ex]
s_j & \mapsto & b_j & (3\leq j \leq r)
\end{array}$$

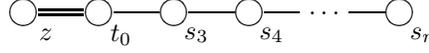
\begin{figure}
$$\begin{xy}
(10,0) *++={\rule{0pt}{4pt}} *\frm{o};
(20,0) *++={\rule{0pt}{4pt}} *\frm{o}**@{=};
(30, 0) *++={\rule{0pt}{4pt}} *\frm{o} **@{-};
(40, 0) *++={\rule{0pt}{4pt}} *\frm{o} **@{-};
(50, 0) *++={\dots}  **@{-};
(60, 0) *++={\rule{0pt}{4pt}} *\frm{o} **@{-};
(13,-3) *++={z};
(23,-3) *++={t_0};
(33,-3) *++={s_3};
(43,-3) *++={s_4};
(63,-3) *++={s_r}
\end{xy}
$$
\caption{Diagram for $B(d,1,r)$}
\label{fig:typeB}
\end{figure}

In \S\ref{ssec:Maps_Bdeer},
we saw that $B(\infty,\infty,r)$ is a subgroup of $B(d,1,r)$, generated by $T \cup S$
where $T= \{t_i \mid i \in {\mathbb Z}\}$ and  $S = \{s_j \mid 3 \leq j \leq r\}$.
Similarly, using the map $\iota_{1}^{e}$ which  embeds $B(de,e,r)$
into $B(d,1,r)$ as a subgroup of index $e$
by sending $z$ (of $B(de,e,r)$) to $z^e$ (of $B(d,1,r)$),
we have that
$B(de,e,r)$ is isomorphic to a subgroup of $B(d,1,r)$ generated by $\{z^e\} \cup T \cup S$.
These embeddings can be summarized as follows.

\begin{equation*}
\renewcommand{\arraystretch}{1.3}
\begin{array}{rcccccll}
B(\infty,\infty,r) & \hookrightarrow & B(de,e,r) & \hookrightarrow &
B(\mathbf{B}_r) & \hookrightarrow & B_{r+1} & \\
 &  & z & \mapsto & b_1^e & \mapsto & \sigma_1^{2e} & \\
t_i  & \mapsto & t_i & \mapsto & b_1^{-i}b_2b_1^i & \mapsto
  & \sigma_1^{-2i}\sigma_2\sigma_1^{2i} &  (i\in{\mathbb Z})\\
s_j & \mapsto & s_j & \mapsto & b_j & \mapsto & \sigma_j  & (3\le j\le r)
\end{array}
\end{equation*}

\medskip

\begin{proposition}
\label{bbr}
\begin{enumerate}
\item The braid group $B(de,e,r)$ is isomorphic to the subgroup
of\/ $B({\mathbf{B}}_r)$ of index $e$ generated by
$\{b_1^e\} \cup
\{b_1^{-i} b_2 b_1^i \mid i \in {\mathbb Z}\} \cup
\{b_j \mid 3 \leq j \leq r\}$.
\item The braid group $B(\infty,\infty,r)$ is isomorphic to
the subgroup of $B({\mathbf{B}}_r)$ generated by
$\{b_1^{-i} b_2 b_1^i \mid i \in {\mathbb Z}\} \cup \{b_j \mid 3 \leq j \leq r\}$.
\end{enumerate}
\end{proposition}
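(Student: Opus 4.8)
The plan is to deduce both statements directly from the explicit isomorphism $B(d,1,r)\cong B({\mathbf{B}}_r)$ and the index-$e$ embedding $\iota_1^e\colon B(de,e,r)\hookrightarrow B(d,1,r)$ recalled in \S\ref{ssec:Embed_Bdeer_Br}, by composing them and tracking where the standard generators go. Write $\phi\colon B(d,1,r)\xrightarrow{\sim}B({\mathbf{B}}_r)$ for the isomorphism sending $z\mapsto b_1$, $t_i\mapsto b_1^{-i}b_2b_1^i$ and $s_j\mapsto b_j$. The essential point is that all the hard work---verifying that $\phi$ is an isomorphism and that $\iota_1^e$ is injective of index $e$---has already been carried out, so that only a bookkeeping argument on generators remains.

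For part (1), I would form the composite $\phi\circ\iota_1^e\colon B(de,e,r)\to B({\mathbf{B}}_r)$. Under $\iota_1^e$ the generator $z$ of $B(de,e,r)$ is sent to $z^e$ (of $B(d,1,r)$), while $t_i\mapsto t_i$ and $s_j\mapsto s_j$; applying $\phi$ then gives $z\mapsto b_1^e$, $t_i\mapsto b_1^{-i}b_2b_1^i$ and $s_j\mapsto b_j$. Since $\{z\}\cup T\cup S$ generates $B(de,e,r)$ by Theorem~\ref{new-pre-deer}, the image of $\phi\circ\iota_1^e$ is precisely the subgroup generated by $\{b_1^e\}\cup\{b_1^{-i}b_2b_1^i\mid i\in{\mathbb Z}\}\cup\{b_j\mid 3\le j\le r\}$. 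As $\phi\circ\iota_1^e$ is injective, $B(de,e,r)$ is isomorphic to this subgroup; and because $\iota_1^e$ has index $e$ and $\phi$ is an isomorphism, the index of the image in $B({\mathbf{B}}_r)$ is again $e$.

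For part (2), I would restrict the same composite to the subgroup $B(\infty,\infty,r)=\langle T\cup S\rangle\le B(de,e,r)$ coming from the embedding $\iota_e$ (equivalently, use the composite $\phi\circ\iota_1^e\circ\iota_e$). Here $z$ does not occur, so the generators $t_i$ and $s_j$ map to $b_1^{-i}b_2b_1^i$ and $b_j$ respectively, and injectivity of the composite again identifies $B(\infty,\infty,r)$ with the subgroup of $B({\mathbf{B}}_r)$ generated by $\{b_1^{-i}b_2b_1^i\mid i\in{\mathbb Z}\}\cup\{b_j\mid 3\le j\le r\}$.

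The only step requiring genuine care---rather than mere composition of maps---is the justification that $\iota_1^e$ is an embedding of index exactly $e$. I would obtain this from the semidirect product decompositions of Corollary~\ref{bdeerthm}: under $B(de,e,r)\cong C_\infty^e\ltimes B(\infty,\infty,r)$ and $B(d,1,r)\cong C_\infty\ltimes B(\infty,\infty,r)$, the map $\iota_1^e$ is induced by the inclusion $C_\infty^e=\langle c^e\rangle\hookrightarrow C_\infty=\langle c\rangle$ together with the identity on $B(\infty,\infty,r)$. Injectivity is then immediate, and since $[C_\infty:C_\infty^e]=e$ with the $B(\infty,\infty,r)$-factor unchanged, the index of the image is $e$, as claimed. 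Everything else is a transcription of the generator correspondences already displayed in \S\ref{ssec:Embed_Bdeer_Br}.
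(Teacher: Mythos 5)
Your proposal is correct and follows essentially the same route as the paper: the paper obtains both statements by composing the index-$e$ embedding $\iota_1^e$ (justified via the semidirect product decompositions of \S\ref{ssec:Maps_Bdeer}) with the isomorphism $B(d,1,r)\cong B(\mathbf{B}_r)$ and tracking the generators, exactly as you do, and then notes that part (1) is Proposition~3.8 of Brou\'e--Malle--Rouquier. No further comment is needed.
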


The first statement of the above proposition is Proposition~3.8 in~\cite{BMR98}.
The following is a direct consequence of the fact that
$B(de,e,r)$ is a finite index subgroup of $B({\mathbf{B}}_r)$ and that $B({\mathbf{B}}_r)$ is a Garside group.

\begin{corollary}
$B(de,e,r)$ has a finite $K(\pi,1)$,
and is biautomatic.
In particular, the word and conjugacy problems in $B(de,e,r)$ are solvable.
\end{corollary}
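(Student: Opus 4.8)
The plan is to derive all three assertions from the two facts emphasized just before the statement: by Proposition~\ref{bbr}(1), $B(de,e,r)$ embeds as a subgroup of $B(\mathbf{B}_r)$ of finite index $e$, and $B(\mathbf{B}_r)\cong B(d,1,r)$ is a Garside group (its monoid $B^+(\mathbf{B}_r)$ being Garside, as recorded in \S\ref{ssec:Embed_Bdeer_Br}). Each of the relevant properties---admitting a finite $K(\pi,1)$, being biautomatic, and having solvable word and conjugacy problems---is known for Garside groups and is inherited by finite-index subgroups. So the strategy is to verify each property for $B(\mathbf{B}_r)$ and then invoke the corresponding closure-under-finite-index statement, applied to $H=B(de,e,r)\le G=B(\mathbf{B}_r)$.

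First I would handle the finite $K(\pi,1)$. A Garside group $G$ acts freely cocompactly on a contractible complex built from its lattice of simple elements; concretely the classifying space assembled from the interval $[1,\Delta]$ (Charney--Meier--Whittlesey) is a finite $K(G,1)$, so $B(\mathbf{B}_r)$ has a finite classifying space $X$. For any subgroup $H\le G$, the covering space $\widetilde{X}/H \to X$ associated to $H$ is again aspherical with $\pi_1=H$, hence a $K(H,1)$; when $[G:H]<\infty$ this cover is finite-sheeted, so $\widetilde{X}/H$ is a finite complex. Taking $H=B(de,e,r)$ yields a finite $K(\pi,1)$.

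Next I would treat biautomaticity. Garside groups are biautomatic, the biautomatic structure being supplied by the (symmetric) greedy normal form, so $B(\mathbf{B}_r)$ is biautomatic; since biautomaticity is preserved under passage to finite-index subgroups, $B(de,e,r)$ is biautomatic. The ``in particular'' clause is then immediate, as biautomatic groups have solvable word problem and solvable conjugacy problem (Gersten--Short). Alternatively one may skip biautomaticity for the decision problems and argue directly: the word and conjugacy problems are algorithmically solvable in the Garside group $B(\mathbf{B}_r)$ via its normal form, and both transfer to the finite-index subgroup by lifting a fixed coset transversal and restricting the search for conjugators.

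The main obstacle I expect is not any computation but the careful sourcing and application of the two closure principles: namely that a Garside group is genuinely biautomatic (a nontrivial input), and that biautomaticity---together with solvability of the conjugacy problem---is inherited by finite-index subgroups. The $K(\pi,1)$ step is essentially formal once the Garside classifying space is in hand, but the biautomaticity-and-conjugacy step requires quoting the right theorems at the right level of generality; the bulk of the write-up will be pinning down these references rather than proving anything new.
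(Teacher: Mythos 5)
Your argument is correct and follows exactly the paper's route: $B(de,e,r)$ is an index-$e$ subgroup of the Garside group $B(\mathbf{B}_r)$, Garside groups are biautomatic and have a finite $K(\pi,1)$, and both properties (together with solvability of the word and conjugacy problems) pass to finite-index subgroups. The references you identify (Charney--Meier--Whittlesey for the finite classifying space, Dehornoy--Paris for biautomaticity, Gersten--Short/Epstein et al.\ for the decision problems) are the same ones the paper invokes.
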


\begin{proof}
Every Garside group is biautomatic~\cite{DP99} and
has a finite $K(\pi, 1)$~\cite{CMW04}.
Any finite index subgroup of a biautomatic group is biautomatic, and
the word and conjugacy problems are solvable in biautomatic groups~\cite{WordProc}.
It is obvious that any finite index subgroup of a group with a finite $K(\pi, 1)$
has a finite $K(\pi, 1)$.
Since $B({\mathbf{B}}_r)$ is a Garside group and
$B(de,e,r)$ is a subgroup of $B({\mathbf{B}}_r)$ of index $e$, we are done.
\end{proof}

Recall that $B(d,1,r)\cong B({\mathbf{B}}_r)\cong B_{r+1,1}$
and that $\operatorname{wd}:B_{r+1,1}\to{\mathbb Z}$ is defined
by $\operatorname{wd}(\sigma_1^2)=1$ and $\operatorname{wd}(\sigma_j)=0$ for $2\le j\le r$.
Under the identification $B(d,1,r) \cong B_{r+1,1}$,
the homomorphism $B(d,1,r) \to C^1_\infty\cong{\mathbb Z}$ in the the exact sequence in \S\ref{ssec:Maps_Bdeer}
is the same as the winding number $\operatorname{wd}:B_{r+1,1}\to{\mathbb Z}$.
Hence we have the following commutative diagram, where the rows are exact.
$$
\xymatrix{
0 \ar[r] & B(\infty,\infty,r) \ar[r]
    & B(d,1,r) \ar[r] \ar[d]^{\cong}
    & C^1_\infty \ar[r] \ar[d]^{\cong} & 0 \\
0 \ar[r] & \ker(\operatorname{wd}) \ar[r]
    & B_{r+1,1} \ar[r]^{\operatorname{wd}}
    & {\mathbb Z} \ar[r] & 0 \\
}$$
Then it follows that $B(\infty,\infty,r)$ is isomorphic to
$\ker(\operatorname{wd})= \{\, g\in B_{r+1,1}\mid \operatorname{wd}(g)=0 \,\}$.

Let $\operatorname{wd}_e \colon B_{r+1,1}\to{\mathbb Z}/e$ be the homomorphism defined by $\operatorname{wd}_e(g)=\operatorname{wd}(g)\bmod e$.
Then $\ker(\operatorname{wd}_e)=\{\, g\in B_{r+1,1}\mid \operatorname{wd}(g)\equiv 0 \mod{e} \,\}$
is a subgroup of $B_{r+1,1}$ of index $e$.
Because the subgroup generated by $\{\sigma_1^{2e}\} \cup
\{\sigma_1^{-2i} \sigma_2 \sigma_1^{2i} \mid i \in {\mathbb Z}\} \cup
\{\sigma_j \mid 3 \leq j \leq r\}$ is also of index $e$ in $B_{r+1,1}$ by Proposition~\ref{bbr}
and because it is a subgroup of $\ker(\operatorname{wd}_e)$, it must coincide with $\ker(\operatorname{wd}_e)$.
Therefore $B(de,e,r)$ is isomorphic to $\ker(\operatorname{wd}_e)$.

From the above discussions, Proposition~\ref{bbr} can be translated into the context of braids
on $r+1$ strings and winding numbers as follows.

\begin{corollary}
\label{cor:GeoBr}
We have the following isomorphisms.
\begin{align*}
B(d,1,r) &\cong B_{r+1,1},\\
B(de,e,r) &\cong \{\, g\in B_{r+1,1}\mid \operatorname{wd}(g)\equiv 0 \mod{e} \,\},\\
B(\infty,\infty,r) &\cong \{\, g\in B_{r+1,1}\mid \operatorname{wd}(g)=0 \,\}.
\end{align*}
The isomorphism $B(d,1,r) \cong B_{r+1,1}$ is given by
$$
z\mapsto \sigma_1^{2}, \quad
t_i\mapsto \sigma_1^{-2i}\sigma_2\sigma_1^{2i} \quad\text{for } i\in{\mathbb Z},\quad
s_j\mapsto \sigma_j \quad\text{for } 3\le j\le r.
$$
\end{corollary}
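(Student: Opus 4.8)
The plan is to assemble the isomorphisms and homomorphisms already established in this subsection and simply read off the images of the generators; the statement then reduces to bookkeeping together with a single index-counting argument. No new machinery is required, since the exact sequences and the explicit maps into $B(\mathbf{B}_r)$ and $B_{r+1}$ have all been set up.

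First I would compose the two isomorphisms in hand. The identification of the diagram in Figure~\ref{fig:typeB} with that of $B(\mathbf{B}_r)$ in Figure~\ref{fig:typeAB}(b) gives $B(d,1,r)\cong B(\mathbf{B}_r)$ via $z\mapsto b_1$, $t_i\mapsto b_1^{-i}b_2b_1^i$ and $s_j\mapsto b_j$, while the identification $b_1=\sigma_1^2$, $b_i=\sigma_i$ ($2\le i\le r$) of \S\ref{ssec:GeomBr} gives $B(\mathbf{B}_r)\cong B_{r+1,1}$. Composing yields the stated isomorphism $B(d,1,r)\cong B_{r+1,1}$ with $z\mapsto\sigma_1^{2}$, $t_i\mapsto\sigma_1^{-2i}\sigma_2\sigma_1^{2i}$ and $s_j\mapsto\sigma_j$, which settles the first line of the corollary together with the explicit formula.

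Next I would check that, under this isomorphism, the winding number $\operatorname{wd}$ coincides with the projection $B(d,1,r)\to C_\infty^1\cong{\mathbb Z}$ coming from the semidirect-product decomposition of \S\ref{ssec:Maps_Bdeer}. Since $\operatorname{wd}$ is a homomorphism with $\operatorname{wd}(\sigma_1^2)=1$ and $\operatorname{wd}(\sigma_j)=0$ for $2\le j\le r$, I compute $\operatorname{wd}(\sigma_1^{2})=1$, $\operatorname{wd}(\sigma_1^{-2i}\sigma_2\sigma_1^{2i})=-i+0+i=0$ and $\operatorname{wd}(\sigma_j)=0$; these match the images of $z$, $t_i$, $s_j$ under the projection, which sends $z$ to a generator and kills $B(\infty,\infty,r)$. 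Hence the two maps $B_{r+1,1}\to{\mathbb Z}$ agree, and comparing kernels via the commutative diagram already drawn gives $B(\infty,\infty,r)\cong\ker(\operatorname{wd})$. For the middle isomorphism I would pass through the embedding $\iota_1^e$, which sends $z\in B(de,e,r)$ to $z^e\in B(d,1,r)$, so that $B(de,e,r)$ corresponds to the subgroup $H\le B_{r+1,1}$ generated by $\sigma_1^{2e}$ together with the elements $\sigma_1^{-2i}\sigma_2\sigma_1^{2i}$ and $\sigma_j$. Each generator of $H$ has winding number divisible by $e$ (indeed $\operatorname{wd}(\sigma_1^{2e})=e$ and the remaining ones have winding number $0$), so $H\subseteq\ker(\operatorname{wd}_e)$.

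The one step calling for genuine care, as opposed to pure bookkeeping, is upgrading this last containment to an equality. By Proposition~\ref{bbr}(1) the subgroup $H$ has index $e$ in $B_{r+1,1}$, and $\ker(\operatorname{wd}_e)$ visibly has index $e$ as well; since a subgroup of index $e$ contained in another subgroup of index $e$ must coincide with it (by multiplicativity of indices), we conclude $H=\ker(\operatorname{wd}_e)$ and hence $B(de,e,r)\cong\ker(\operatorname{wd}_e)$. I expect this index comparison to be the only place where the argument uses more than direct composition of established maps and a winding-number computation on generators.
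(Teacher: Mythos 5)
Your proposal is correct and follows essentially the same route as the paper: compose the presentation-level isomorphism $B(d,1,r)\cong B(\mathbf{B}_r)$ with the standard identification $B(\mathbf{B}_r)\cong B_{r+1,1}$, match the winding number with the projection onto $C^1_\infty$ to get the kernel description of $B(\infty,\infty,r)$, and use the index-$e$ comparison from Proposition~\ref{bbr} to upgrade the containment $H\subseteq\ker(\operatorname{wd}_e)$ to equality. The index-counting step you single out as the only nontrivial point is exactly the argument the paper uses.
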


In this way, elements of $B(de,e,r)$ and $B(\infty, \infty, r)$
may be {\em visualised} as geometric braids.
Notice that the braid groups $B(\infty,\infty,r)$ and $B(\widetilde{\mathbf{A}}_{r-1})$
are isomorphic to the same subgroup of $B_{r+1,1}$,
hence we have the following.

\begin{corollary}\label{cor:isomorphism}
The braid group $B(\infty,\infty,r)$ is isomorphic to the braid
group $B(\widetilde{\mathbf{A}}_{r-1})$.
\end{corollary}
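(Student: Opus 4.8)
The plan is to deduce the isomorphism directly from two identifications already in hand, each realising one of the groups as \emph{the same} subgroup of the one-pure braid group $B_{r+1,1}$. First I would recall from Corollary~\ref{cor:GeoBr} that
$$B(\infty,\infty,r)\cong\{\,g\in B_{r+1,1}\mid \operatorname{wd}(g)=0\,\},$$
an identification obtained by tracing through the commutative diagram comparing the exact sequence for $B(d,1,r)$ with the winding-number sequence for $B_{r+1,1}$, together with the isomorphism $B(d,1,r)\cong B_{r+1,1}$ of Corollary~\ref{cor:GeoBr}.

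Second I would invoke the classical fact recorded in \S\ref{ssec:GeomBr} and attributed to \cite{Cri99,All02,CC05,BM07}, namely that $B(\widetilde{\mathbf{A}}_{r-1})$ is isomorphic to the subgroup of $B_{r+1,1}$ consisting of one-pure $(r+1)$-braids with zero winding number around the first string:
$$B(\widetilde{\mathbf{A}}_{r-1})\cong\{\,g\in B_{r+1,1}\mid \operatorname{wd}(g)=0\,\}.$$
Since the two target subgroups are literally the same subgroup of $B_{r+1,1}$---the kernel of the single winding-number homomorphism $\operatorname{wd}\colon B_{r+1,1}\to{\mathbb Z}$---composing the first isomorphism with the inverse of the second yields the desired isomorphism $B(\infty,\infty,r)\cong B(\widetilde{\mathbf{A}}_{r-1})$.

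There is essentially no obstacle here, since both key isomorphisms are already established earlier in the paper; the only point requiring a moment's care is to confirm that the two identifications refer to the \emph{same} map $\operatorname{wd}$ on $B_{r+1,1}$, so that the two copies of $\ker(\operatorname{wd})$ coincide as subgroups rather than merely being abstractly isomorphic. This is guaranteed by the normalisation $\operatorname{wd}(\sigma_1^2)=1$ and $\operatorname{wd}(\sigma_j)=0$ for $2\le j\le r$ fixed once and for all in the definition of $\operatorname{wd}$.

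As a remark, one could instead attempt a self-contained proof by directly comparing the presentation of Theorem~\ref{binftyrpresthm} with the Coxeter presentation~\eqref{pres:A_til_r}. However, matching the cyclically arranged generators $s_1,\dots,s_r$ of $B(\widetilde{\mathbf{A}}_{r-1})$ against the infinite family $\{t_i\}\cup\{s_j\}$ would require explicitly constructing a Tietze transformation and checking every relation in both directions; the geometric route above sidesteps this entirely and is the natural argument given the machinery already developed.
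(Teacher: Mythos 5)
Your proposal is correct and follows exactly the paper's own route: the paper derives this corollary by observing that Corollary~\ref{cor:GeoBr} identifies $B(\infty,\infty,r)$ with $\{\,g\in B_{r+1,1}\mid \operatorname{wd}(g)=0\,\}$, which is the same subgroup of $B_{r+1,1}$ that \S\ref{ssec:GeomBr} identifies with $B(\widetilde{\mathbf{A}}_{r-1})$. Your added care about the two identifications using the same normalisation of $\operatorname{wd}$ is a reasonable point that the paper leaves implicit.
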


Similarly to the case of $B(e,e,r)$, define a map
$$\begin{array}{rcl}
\tau \colon B(de,e,r) & \to & B(de,e,r) \\
z &\mapsto & z \\
t_i &\mapsto & t_{i+1} \qquad (i\in{\mathbb Z}) \\
s_j &\mapsto & s_j \qquad (3\le j\le r).
\end{array}$$
Then $\tau$ is an automorphism of $B(de,e,r)$, and
$\tau(g)=\sigma_1^{-2}g\sigma_1^2$ for all $g\in B(de,e,r)$
when $B(de,e,r)$ is viewed as a subgroup of $B_{r+1,1}$.

If $e=1$, then $\tau$ is an inner automorphism, that is,
$\tau(g)=z^{-1}gz$ for all $g\in B(d,1,r)$.
But this is not necessarily the case for $e\ge 2$.

\begin{proposition}
The automorphism $\tau:B(de,e,r)\to B(de,e,r)$ is an inner automorphism if and only if
$r$ and $e$ are relatively prime.
\end{proposition}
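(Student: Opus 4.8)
The plan is to work entirely inside the geometric model $B(de,e,r)\cong\{g\in B_{r+1,1}:\operatorname{wd}(g)\equiv 0\pmod e\}$ of Corollary~\ref{cor:GeoBr}, in which $\tau$ is the restriction of the inner automorphism $g\mapsto\sigma_1^{-2}g\sigma_1^{2}$ of $B_{r+1,1}$, and to reduce the statement to a winding-number computation in a centralizer. Let $Z$ denote the centralizer of $B(de,e,r)$ inside $B_{r+1,1}$. Then $\tau$ coincides with conjugation by an element $h\in B(de,e,r)$ precisely when $c:=h^{-1}\sigma_1^{-2}$ lies in $Z$; since $c\in B_{r+1,1}$ automatically and $\operatorname{wd}(h)=-1-\operatorname{wd}(c)$ must be divisible by $e$ (because $h\in B(de,e,r)$), such an $h$ exists if and only if $Z$ contains an element $c$ with $\operatorname{wd}(c)\equiv-1\pmod e$. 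As $\operatorname{wd}\colon B_{r+1,1}\to\mathbb Z$ is a homomorphism, $\operatorname{wd}(Z)=m\mathbb Z$ for some $m\ge 0$, and the condition becomes $-1\in m\mathbb Z+e\mathbb Z$, i.e.\ $\gcd(m,e)=1$. Thus the proposition follows at once once I establish $\operatorname{wd}(Z)=r\mathbb Z$, for then the criterion is $\gcd(r,e)=1$.

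The heart of the argument is therefore the equality $\operatorname{wd}(Z)=r\mathbb Z$. The inclusion $r\mathbb Z\subseteq\operatorname{wd}(Z)$ is easy: the full twist $\Delta^{2}=(\sigma_1\cdots\sigma_r)^{r+1}$ is central in $B_{r+1}$, hence lies in $Z$, and a short linking-number count gives $\operatorname{wd}(\Delta^{2})=r$. For the reverse inclusion I would take an arbitrary $c\in Z$. Since $t_0=\sigma_2$ and $s_j=\sigma_j$ lie in $B(de,e,r)$, the element $c$ commutes with $\sigma_2,\dots,\sigma_r$, so the induced permutation $\pi_c$, which already fixes $1$, commutes with the transpositions $(2\,3),\dots,(r\,r{+}1)$ generating $S_{\{2,\dots,r+1\}}$. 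For $r\ge 3$ this symmetric group is centreless, forcing $\pi_c=\mathrm{id}$, so $c$ is a pure braid. Now the linking numbers $\operatorname{lk}_{1,j}(c)$ are defined, and conjugation by $\sigma_j$ (which swaps strands $j$ and $j{+}1$ while fixing strand $1$) interchanges $\operatorname{lk}_{1,j}(c)$ and $\operatorname{lk}_{1,j+1}(c)$; since $\sigma_j c\sigma_j^{-1}=c$, this forces $\operatorname{lk}_{1,2}(c)=\cdots=\operatorname{lk}_{1,r+1}(c)=:\ell$. Because $\operatorname{wd}(c)=\sum_{j=2}^{r+1}\operatorname{lk}_{1,j}(c)$ for a pure braid, we obtain $\operatorname{wd}(c)=r\ell\in r\mathbb Z$, which together with the easy inclusion yields $\operatorname{wd}(Z)=r\mathbb Z$.

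The step I expect to be the main obstacle is controlling the centralizer $Z$, which a priori could be intricate; the resolution is to avoid computing $Z$ itself and instead determine only its image under $\operatorname{wd}$, exploiting that commuting with the standard generators $\sigma_2,\dots,\sigma_r$ already forces a centralizing braid to be pure with all first-strand linking numbers equal. As a consistency check, this framework recovers the two transparent inner powers of $\tau$: conjugation by $z\leftrightarrow\sigma_1^{2e}$ realizes $\tau^{e}$ and conjugation by the Garside element $\Lambda$ realizes $\tau^{r}$ (Lemma~\ref{lem:Lambda_balanced}), in agreement with $[\tau]$ having order dividing $\gcd(r,e)$ in the outer automorphism group. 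Finally the excluded case $r=2$, where $S_{\{2,3\}}$ is not centreless, I would handle directly: there $B(\infty,\infty,2)$ is free of rank two and $Z$ is computed by hand inside $B_3$, whose centre $\langle\Delta^2\rangle$ satisfies $\operatorname{wd}(\Delta^2)=2$, again giving $\operatorname{wd}(Z)=2\mathbb Z=r\mathbb Z$.
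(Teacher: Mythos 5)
Your argument is correct and arrives at the same arithmetic criterion ($kr+1\equiv 0 \pmod e$ for some $k$, i.e.\ $e\wedge r=1$) as the paper, but it handles the key step by a genuinely different route. The paper shows that any element realizing $\tau$ differs from $\sigma_1^2$ by something commuting with $\sigma_1^{2e},\sigma_2,\dots,\sigma_r$, and then invokes the Fenn--Rolfsen--Zhu theorem (commuting with a nonzero power of $\sigma_i$ forces commuting with $\sigma_i$, see \cite{FRZ96}) to conclude that this element is central in $B_{r+1}$, hence a power of $\Delta^2$; the winding-number count is then immediate. You never determine the centralizer $Z$ of $B(de,e,r)$ in $B_{r+1,1}$ exactly: you compute only $\operatorname{wd}(Z)$, by showing a centralizing braid is pure (centerlessness of the symmetric group on $\{2,\dots,r+1\}$ for $r\ge 3$) and has all linking numbers with the first strand equal (conjugation by $\sigma_j$ permutes them), whence $\operatorname{wd}(c)\in r\mathbb Z$. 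This is more elementary, avoids the citation to \cite{FRZ96}, and packages both directions cleanly through the subgroup $\operatorname{wd}(Z)=r\mathbb Z$. What the paper's route buys is a stronger structural conclusion ($Z$ is exactly the center of $B_{r+1}$) and a uniform treatment of $r=2$: your symmetric-group step degenerates there, and the case you defer to a ``by hand'' computation of $Z$ inside $B_3$ genuinely requires an argument (e.g.\ that the centralizer of $\sigma_2$ in $B_3$ is $\langle \sigma_2,\Delta^2\rangle$ and that no nontrivial power of $\sigma_2$ commutes with $\sigma_1^{2e}$ --- which is essentially the Fenn--Rolfsen--Zhu fact again), so you should either supply it or quote \cite{FRZ96} at that point. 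The convention slip in setting $c=h^{-1}\sigma_1^{-2}$ rather than $h\sigma_1^{-2}$ is harmless, since $Z$ is a subgroup and only $\operatorname{wd}(c)\bmod e$ enters the criterion.
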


\begin{proof}
Identify $B(de,e,r)$ with $\{\, g\in B_{r+1,1}\mid \operatorname{wd}(g)\equiv 0 \mod{e} \,\}$.

Suppose that $\tau$ is an inner automorphism of $B(de,e,r)$.
Then there exists $x\in B_{r+1,1}$ with $\operatorname{wd}(x)\equiv 0 \mod{e}$ such that
$$
x^{-1}gx=\tau(g)=\sigma_1^{-2}g\sigma_1^2
$$
for all $g\in B(de,e,r)$.
Therefore $x\sigma_1^{-2}$ commutes with all the elements in $B(de,e,r)$.
Hence it commutes with $\sigma_1^{2e},\sigma_2,\ldots,\sigma_r$.
It is known that if $h\in B_{r+1}$ commutes with $\sigma_i^k$ for some $k\ne 0$, then
$h$ commutes with $\sigma_i$ (see~\cite{FRZ96}).
Therefore $x\sigma_1^{-2}$ commutes with $\sigma_i$ for all $1\le i\le r$,
hence $x\sigma_1^{-2}$  belongs to the center of $B_{r+1}$, which is
the infinite cyclic group generated by the full twist
$\Delta^2$, where $\Delta=(\sigma_1)(\sigma_2\sigma_1)\cdots(\sigma_r\sigma_{r-1}\cdots\sigma_1)$.
Hence $x\sigma_1^{-2}=\Delta^{2k}$ for some $k\in{\mathbb Z}$.
So $x=\Delta^{2k}\sigma_1^2$.
Then
$$
\operatorname{wd}(x)=\operatorname{wd}(\Delta^{2k})+\operatorname{wd}(\sigma_1^2)=kr+1\equiv 0 \mod{e}.
$$
Therefore $r$ and $e$ are relatively prime.

Conversely, suppose that $r$ and $e$ are relatively prime.
Then $kr+1\equiv 0 \mod{e}$ for some $k\in{\mathbb Z}$.
Let $x=\sigma_1^2\Delta^{2k}\in B_{r+1}$.
Since $x$ is 1-pure and $\operatorname{wd}(x)=kr+1\equiv 0 \mod{e}$, $x\in B(de,e,r)$.
Since $\Delta^{2k}$ is central,
$x^{-1}gx=\sigma_1^{-2}g\sigma_1^2=\tau(g)$ for all $g\in B(de,e,r)$,
hence $\tau$ is an inner automorphism.
\end{proof}

The next proposition will be used in the
study of discreteness of translation numbers (in \S\ref{ssec:tran})
and classification of periodic elements (in \S\ref{sec:PeriodElt}).

\begin{proposition}\label{prop:conj}
The embedding $\iota_1^e:B(de,e,r)\to B(d,1,r)$ induces
a finite-to-one map on the sets of conjugacy classes.
More precisely, for $g, h\in B(de,e,r)$,
$\iota_1^e(g)$ and $\iota_1^e(h)$ are conjugate in $B(d,1,r)$ if and
only if $g$ is conjugate to $\tau^k(h)$ in $B(de,e,r)$ for some $0\le k<e$.
\end{proposition}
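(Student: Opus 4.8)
The plan is to exploit the structure recorded just before the proposition together with Corollary~\ref{cor:GeoBr}: under the identification $B(d,1,r)\cong B_{r+1,1}$, the subgroup $B(de,e,r)$ is exactly $\ker(\operatorname{wd}_e)$, the kernel of $\operatorname{wd}_e\colon B_{r+1,1}\to{\mathbb Z}/e$. Hence $N:=B(de,e,r)$ is a \emph{normal} subgroup of $G:=B(d,1,r)$ of index $e$, and the quotient $G/N\cong{\mathbb Z}/e$ is cyclic, generated by the image of $z$ (since $\operatorname{wd}_e(z)=\operatorname{wd}(\sigma_1^{2})\bmod e=1$). Consequently $G=\bigsqcup_{k=0}^{e-1}z^kN$. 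Moreover, from the identity $\tau(g)=\sigma_1^{-2}g\sigma_1^{2}=z^{-1}gz$ recorded above, the automorphism $\tau$ of $N$ is precisely conjugation by $z$, so $\tau^k(g)=z^{-k}gz^{k}$ for all $k$ and all $g\in N$. With this dictionary the proposition becomes the standard fact about conjugacy in a group possessing a cyclic quotient by a normal subgroup.

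For the forward direction, suppose $x^{-1}gx=h$ for some $x\in G$ and $g,h\in N$. Writing $x=z^kn$ with $0\le k<e$ and $n\in N$, I would compute
$$
h=x^{-1}gx=n^{-1}z^{-k}gz^{k}n=n^{-1}\tau^k(g)\,n,
$$
so that $h$ and $\tau^k(g)$ are conjugate in $N$. Applying the automorphism $\tau^{-k}$, which preserves $N$-conjugacy classes, then shows $g$ and $\tau^{-k}(h)$ are conjugate in $N$; reducing $-k$ modulo $e$ gives the stated conclusion. Conversely, if $g=m^{-1}\tau^k(h)m$ for some $m\in N$ and $0\le k<e$, then putting $x=z^km\in G$ yields $x^{-1}hx=m^{-1}z^{-k}hz^{k}m=m^{-1}\tau^k(h)m=g$, so $\iota_1^e(g)$ and $\iota_1^e(h)$ are conjugate in $G$.

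The finite-to-one assertion is then immediate: by the equivalence just established, the fibre over the $G$-conjugacy class of an element $h\in N$ consists of the $N$-conjugacy classes of $h,\tau(h),\ldots,\tau^{e-1}(h)$, hence has at most $e$ members, so the induced map on conjugacy classes is at most $e$-to-one.

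I do not expect a genuine obstacle here, since the argument is purely formal once the two ingredients are in place—normality of $B(de,e,r)$ in $B(d,1,r)$ with cyclic quotient generated by $z$, and the identification of $\tau$ with conjugation by $z$—both of which are already available in the excerpt. The only points demanding care are the bookkeeping of the exponent $k$ and its sign when passing between the two implications, and the verification that the powers $z^k$ for $0\le k<e$ genuinely exhaust the cosets of $N$ in $G$.
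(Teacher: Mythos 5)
Your proof is correct and follows essentially the same route as the paper: both arguments adjust the conjugating element by a power of $\sigma_1^2$ (your coset representative $z^k$) so that it lands in $B(de,e,r)$, at the cost of twisting one side by $\tau^k$, using that $\tau$ is conjugation by $\sigma_1^2$ and that $B(de,e,r)=\ker(\operatorname{wd}_e)$ has the powers of $\sigma_1^2$ as coset representatives. Your packaging via the normal subgroup with cyclic quotient is just a more structural phrasing of the paper's direct winding-number computation, and your sign bookkeeping (reducing $-k$ modulo $e$, justified because $\tau^e$ is inner on $B(de,e,r)$) is sound.
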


\begin{proof}
Using Corollary~\ref{cor:GeoBr}, we identify $B(d,1,r)$ and $B(de,e,r)$ with $B_{r+1,1}$ and
$\{\, g\in B_{r+1,1}\mid \operatorname{wd}(g)\equiv 0 \mod{e} \, \}$, respectively.
Let $g, h\in B(de,e,r)$.

Suppose that $g$ is conjugate to $\tau^k(h)$ in $B(de,e,r)$ for some $0\le k<e$.
Since $\tau^k(h)=\sigma_1^{-2k}h\sigma_1^{2k}$ is conjugate to $h$ in $B(d,1,r)$,
$g$ and $h$ are conjugate in $B(d,1,r)$.

Conversely, suppose that $g$ and $h$ are conjugate in $B(d,1,r)$.
Then $h=x^{-1}gx$ for some $x\in B(d,1,r)=B_{r+1,1}$.
Let $\operatorname{wd}(x)\equiv -k \mod{e}$ for some $0\le k<e$.
Let $y=x\sigma_1^{2k}$.
Then $y\in B(de,e,r)$ as $\operatorname{wd}(y)\equiv 0 \mod{e}$, and
$$
y^{-1}gy=\sigma_1^{-2k}x^{-1}gx\sigma_1^{2k}
=\sigma_1^{-2k}h\sigma_1^{2k}=\tau^k(h).
$$
Therefore $g$ is conjugate to $\tau^k(h)$ in $B(de,e,r)$.
\end{proof}

\subsection{Uniqueness of roots up to conjugacy}

The following are well-known results on the uniqueness of roots in braid groups.

\smallskip

\begin{enumerate}
\item
(Gonz\'alez-Meneses \cite{Gon03})
Let $g$ and $h$ be elements of\/ $B({\mathbf{A}}_r)$
such that $g^k=h^k$ for some nonzero integer $k$.
Then $g$ and $h$ are conjugate in $B({\mathbf{A}}_r)$.

\item
(Bardakov~\cite{Bar}, Kim and Rolfsen~\cite{KR03})
Let $g$ and $h$ be pure braids in $B({\mathbf{A}}_r)$
such that $g^k=h^k$ for some nonzero integer $k$.
Then $g$ and $h$ are equal.

\item
(Lee and Lee~\cite{LL10})
Let\/ $G$ be one of the braid groups of types ${\mathbf{A}}_r$, ${\mathbf{B}}_r$, $\widetilde {\mathbf{A}}_{r-1}$ and 
$\widetilde {\mathbf{C}}_{r-1}$.
If\/ $g,h\in G$ are such that
$g^k=h^k$ for some nonzero integer $k$,
then $g$ and $h$ are conjugate in $G$.
\end{enumerate}

The first of these was conjectured by Makanin~\cite{Mak71}
in the early seventies, and proved by Gonz\'alez-Meneses.
The second was initially proved by Bardakov
by combinatorial arguments, and it follows easily
from the biorderability of pure braids by Kim and Rolfsen.
The third result is a generalization of the other two and comes from
the following theorem, by viewing the braid groups of types ${\mathbf{B}}_r$,
$\widetilde {\mathbf{A}}_{r-1}$ and  $\widetilde{\mathbf{C}}_{r-1}$ as subgroups
of $B_{r+1}$ consisting of partially pure braids.

\begin{theorem}[\cite{LL10}]
\label{thm:UniqueRoot}
Let $P$ be a subset of\/ $\{1,\ldots,r+1\}$ with $1\in P$.
Let $g$ and $h$ be $P$-pure $(r+1)$-braids
such that $g^k=h^k$ for some nonzero integer $k$.
Then there exists a $P$-straight $(r+1)$-braid $x$
with $h=x^{-1} g x$ and $\operatorname{wd}(x)=0$.
\end{theorem}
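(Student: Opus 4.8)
The plan is to follow the Nielsen--Thurston strategy of Gonz\'alez-Meneses~\cite{Gon03}, viewing an $(r+1)$-braid as a mapping class of the $(r+1)$-punctured disk and inducting on $r$, but now carrying along the extra data of the strings indexed by $P$ and of the winding number around the first string. The basic observation is that $g$ and $h$ each centralize the common power $\rho=g^k=h^k$, since $g\rho=g\cdot g^k=g^k\cdot g=\rho g$ and likewise for $h$; moreover $\rho$ and $g$ share the same Nielsen--Thurston type and canonical reduction system. That $g$ and $h$ are conjugate \emph{in} $B_{r+1}$ is exactly Gonz\'alez-Meneses's theorem, quoted as item~(1) above, so the whole content is to produce a conjugator that is $P$-straight and has winding number zero. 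The argument divides according to the Nielsen--Thurston type of $\rho$.

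First I would dispose of the periodic and pseudo-Anosov cases, where the centralizer of $\rho$ is well understood. If $\rho$ is periodic then so are its $k$-th roots $g$ and $h$, and using the classification of periodic braids as conjugates of powers of $\delta=\sigma_1\cdots\sigma_r$ and of $\sigma_1\delta$ one checks directly that the two roots are conjugate by a braid that can be chosen to fix the first puncture, to be straight on the $P$-strings, and to have zero winding number. If $\rho$ is pseudo-Anosov, the known description of the centralizer of a pseudo-Anosov braid shows that $g$ and $h$ are conjugate, and the residual periodic and central freedom in the centralizer is then spent to normalize the conjugator to be $P$-straight with $\operatorname{wd}=0$.

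The reducible case carries the real content and is where the inductive hypothesis is used. Here $\rho$ has a canonical reduction system $\mathcal R$, which, being canonical, is preserved setwise by every braid commuting with $\rho$, in particular by $g$ and $h$. Cutting the disk along $\mathcal R$ decomposes each of $g$, $h$ and $\rho$ into an outer braid permuting the tubes bounded by $\mathcal R$ together with inner braids inside the tubes, all on strictly fewer strings, and the identity $g^k=h^k$ descends to every piece. After checking that the auxiliary data are inherited correctly---which strings of $P$ land in which tube, that the tube containing the first puncture keeps it distinguished, and that the hypothesis $1\in P$ survives at each level---the induction supplies $P$-straight, winding-number-zero conjugators on the pieces, which must then be reassembled into a single conjugator $x$ on the whole disk.

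The step I expect to be the main obstacle is precisely this reassembly: controlling $P$-straightness and the winding number around the first string \emph{at the same time}. The first puncture is distinguished in that it alone carries $\operatorname{wd}$, so one has to track whether it sits inside a tube or outside, how the tubes wind about it, and how the total winding number splits between the outer braid and the inner braids. Once a $P$-straight conjugator is in hand, its winding number is governed only by the strings outside $P$, and one must use the freedom of multiplying by centralizing braids of prescribed winding number to force $\operatorname{wd}(x)=0$ without reintroducing any braiding among the $P$-strings; verifying that this freedom always suffices is the crux of the proof.
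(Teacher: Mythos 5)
You should first note that the paper does not prove this statement at all: Theorem~\ref{thm:UniqueRoot} is imported verbatim from~\cite{LL10}, so there is no internal proof here to compare against; what follows is an assessment of your sketch on its own terms. Your overall strategy --- reduce to the Nielsen--Thurston type of $\rho=g^k=h^k$, handle the periodic and pseudo-Anosov cases via the known centralizer structure, and induct along the canonical reduction system in the reducible case --- is indeed the strategy of Gonz\'alez-Meneses~\cite{Gon03} and of~\cite{LL10}, so the plan is sound. Two small corrections along the way: in the pseudo-Anosov case the centralizer of $\rho$ is free abelian, so $g$ and $h$ commute, $(gh^{-1})^k=1$, and torsion-freeness of $B_{r+1}$ gives $g=h$ outright --- no normalization of a conjugator is needed; and in the periodic case the hypothesis $1\in P$ forces $g$ and $h$ to be conjugates of powers of $\varepsilon$ whose distinguished fixed puncture is the first one, which is precisely the nontrivial fact hiding behind ``one checks directly.''

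As written, however, this is a plan rather than a proof, and you concede as much: the two points that carry all the difficulty are left open. First, in the reducible case you cannot conjugate the tubular pieces independently --- the exterior conjugator permutes the tubes, the interior conjugators must intertwine with that permutation, and before any of this one must conjugate $g$, $h$ and the reduction system into a standard (round-circle) position by elements whose $P$-straightness and winding number must themselves be controlled; none of this is addressed. Second, the final normalization $\operatorname{wd}(x)=0$ cannot in general be achieved by ``multiplying by centralizing braids of prescribed winding number'': the winding numbers realized by $P$-straight elements of the centralizer of $g$ form a subgroup of ${\mathbb Z}$ that need not be all of ${\mathbb Z}$ (for instance the central element $\Delta^2$ has $\operatorname{wd}(\Delta^2)=r$ and is not $P$-straight once $|P|\ge 2$), so the conjugator must be built with the correct winding number from the start, inductively through the reduction system, rather than corrected afterwards. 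Until these two steps are supplied the argument is incomplete.
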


Applying Theorem~\ref{thm:UniqueRoot} to $1$-pure $(r+1)$-braids,
and using the isomorphisms in Corollary~\ref{cor:GeoBr},
we obtain the following uniqueness of roots up to conjugacy
in $B(de,e,r)$ and $B(\infty,\infty,r)$.

\begin{corollary}
Let $g,h\in B(de,e,r)$ be such that $g^k=h^k$
for some nonzero integer $k$.
Then $g$ and $h$ are conjugate in $B(de,e,r)$.
Furthermore, a conjugating element $x$ can be chosen from the subgroup
$B(\infty,\infty,r)$ so that $h=x^{-1}gx$.
\end{corollary}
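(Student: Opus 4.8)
The plan is to transport the statement into the geometric braid group $B_{r+1,1}$ and then invoke Theorem~\ref{thm:UniqueRoot} essentially verbatim. First I would use Corollary~\ref{cor:GeoBr} to identify $B(de,e,r)$ with the subgroup $\{\, g\in B_{r+1,1}\mid \operatorname{wd}(g)\equiv 0 \mod e\,\}$ and $B(\infty,\infty,r)$ with the kernel $\{\, g\in B_{r+1,1}\mid \operatorname{wd}(g)=0\,\}$ of the winding-number homomorphism. Since $\operatorname{wd}(g)=0$ forces $\operatorname{wd}(g)\equiv 0 \mod e$, these identifications already exhibit $B(\infty,\infty,r)$ as a subgroup of $B(de,e,r)$, so it will be enough to produce a conjugating element lying in the former.

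Under these identifications, the given $g$ and $h$ are in particular elements of $B_{r+1,1}$, hence they are $1$-pure---that is, $\{1\}$-pure---braids on $r+1$ strings. Taking $P=\{1\}$, which contains $1$ as required, the equation $g^k=h^k$ satisfies the hypotheses of Theorem~\ref{thm:UniqueRoot}. Applying that theorem would yield a $\{1\}$-straight braid $x$ with $h=x^{-1}gx$ and $\operatorname{wd}(x)=0$.

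The only point that then needs checking is that this $x$ actually lands inside $B(\infty,\infty,r)$, and this is where the choice $|P|=1$ pays off. As observed immediately after the definition of $P$-straightness, when $|P|=1$ a braid is $P$-pure if and only if it is $P$-straight; thus $x$ being $\{1\}$-straight means precisely $x\in B_{r+1,1}$. Combined with $\operatorname{wd}(x)=0$, this places $x$ in $\{\, g\in B_{r+1,1}\mid \operatorname{wd}(g)=0\,\}\cong B(\infty,\infty,r)$. Since this group is contained in $B(de,e,r)$, the relation $h=x^{-1}gx$ exhibits $g$ and $h$ as conjugate in $B(de,e,r)$ via an element of $B(\infty,\infty,r)$, which is exactly the claim.

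I do not expect a genuine obstacle, since all the real difficulty is already absorbed into Theorem~\ref{thm:UniqueRoot}; the only substantive step is the bookkeeping that locates the conjugator in the correct subgroup. If one wished to avoid citing the $|P|=1$ simplification, one could instead argue from the literal definition: deleting every string other than the first always leaves the trivial one-string braid, so $\{1\}$-straightness imposes no condition beyond $\{1\}$-purity, giving the same conclusion $x\in B_{r+1,1}$.
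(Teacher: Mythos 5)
Your proposal is correct and is essentially the paper's own argument: the paper likewise identifies $B(de,e,r)$ and $B(\infty,\infty,r)$ with the winding-number subgroups of $B_{r+1,1}$ via Corollary~\ref{cor:GeoBr} and applies Theorem~\ref{thm:UniqueRoot} with $P=\{1\}$, so that the $\{1\}$-straight conjugator with $\operatorname{wd}(x)=0$ lands in $B(\infty,\infty,r)$. Your extra remark that $\{1\}$-straightness coincides with $\{1\}$-purity is exactly the observation the paper makes after the definition, so no gap remains.
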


\begin{corollary}
If\/ $g,h\in B(\infty, \infty, r)$ are such that
$g^k=h^k$ for some nonzero integer $k$,
then $g$ and $h$ are conjugate in $B(\infty, \infty, r)$.
\end{corollary}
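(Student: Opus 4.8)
The plan is to deduce this directly from Theorem~\ref{thm:UniqueRoot} applied with the one-element set $P=\{1\}$, exactly as in the preceding corollary but now keeping explicit track of the winding number. First I would invoke the isomorphism of Corollary~\ref{cor:GeoBr} to identify $B(\infty,\infty,r)$ with the subgroup $\{\,g\in B_{r+1,1}\mid \operatorname{wd}(g)=0\,\}$ of $B_{r+1}$. Under this identification $g$ and $h$ become $1$-pure $(r+1)$-braids, since they lie in $B_{r+1,1}$, with $\operatorname{wd}(g)=\operatorname{wd}(h)=0$, and they still satisfy $g^k=h^k$ for the same nonzero integer $k$.

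Next I would apply Theorem~\ref{thm:UniqueRoot} with $P=\{1\}$. As $g$ and $h$ are $\{1\}$-pure, the theorem produces a $\{1\}$-straight $(r+1)$-braid $x$ with $h=x^{-1}gx$ and $\operatorname{wd}(x)=0$. The key point is that this $x$ automatically lands in $B(\infty,\infty,r)$: since $|P|=1$, a braid is $\{1\}$-straight if and only if it is $\{1\}$-pure, so $x$ is $1$-pure and hence belongs to $B_{r+1,1}$; combined with $\operatorname{wd}(x)=0$, this places $x$ in $\{\,g\in B_{r+1,1}\mid \operatorname{wd}(g)=0\,\}\cong B(\infty,\infty,r)$. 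Therefore $g$ and $h$ are conjugate inside $B(\infty,\infty,r)$, as required.

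I do not expect a serious obstacle, since all the real work is already packaged in Theorem~\ref{thm:UniqueRoot}; the only thing to verify is membership of the conjugating element in the subgroup. The one subtlety worth flagging is that one needs the conjugator to be \emph{simultaneously} $1$-pure and of winding number zero, and it is precisely the minimal choice $P=\{1\}$ (rather than a larger $P$) that secures both: $\{1\}$-straightness forces $1$-purity, while Theorem~\ref{thm:UniqueRoot} separately supplies the vanishing of the winding number. An equivalent route, which I would record as a remark, is to observe that $B(\infty,\infty,r)$ embeds in $B(de,e,r)$ via $\iota_e$, so the statement also follows at once from the preceding corollary, whose conjugating element was already chosen from $B(\infty,\infty,r)$.
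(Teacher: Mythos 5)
Your proof is correct and follows exactly the route the paper intends: identify $B(\infty,\infty,r)$ with $\{\,g\in B_{r+1,1}\mid \operatorname{wd}(g)=0\,\}$ via Corollary~\ref{cor:GeoBr}, apply Theorem~\ref{thm:UniqueRoot} with $P=\{1\}$, and use the facts that $\{1\}$-straight equals $\{1\}$-pure and that $\operatorname{wd}(x)=0$ to place the conjugator in the subgroup. No gaps.
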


\noindent
\textbf{Question.}\ \
Does the uniqueness of roots up to conjugacy hold in $B(e,e,r)$
and in the braid groups of real reflection groups of types other than
${\mathbf{A}}_r$, ${\mathbf{B}}_r$, $\widetilde{\mathbf{A}}_{r-1}$ and $\widetilde{\mathbf C}_{r-1}$?

\subsection{Discreteness of translation numbers}
\label{ssec:tran}

Translation numbers, introduced by Gersten and Short~\cite{GS91},
are  quite useful since it has both algebraic and geometric aspects.
For a finitely generated group $G$ and a finite set $X$ of semigroup generators for $G$,
the \emph{translation number} of an element $g\in G$ with respect to $X$  is defined by
$$t_{G,X}(g)=\liminf_{n\to \infty}\frac{|g^n|{}_X}n,$$
where $|\cdot|_X$ denotes the minimal word-length in the alphabet $X$.
When $A$ is a set of group generators, $|g|_A$ and $t_{G,A}(g)$ indicate
$|g|_{A\cup A^{-1}}$ and $t_{G, A\cup A^{-1}}(g)$, respectively.
Kapovich~\cite{Kap97} and Conner~\cite{Con00}
suggested the following notions:
a finitely generated group $G$ is said to be
\begin{enumerate}
\item \emph{translation separable}
if for some (and hence for any) finite set $X$ of semigroup generators for $G$
the translation numbers of non-torsion elements are strictly positive;

\item \emph{translation discrete} if it is translation separable
and for some (and hence for any) finite set $X$ of semigroup generators for $G$
the set $t_{G, X}(G)$ has 0 as an isolated point;

\item \emph{strongly translation discrete} if it is translation separable
and for some (and hence for any) finite set $X$ of semigroup generators for $G$
and for any real number $r$ the number of conjugacy classes
$[g]=\{h^{-1}gh:h\in G\}$
with $t_{G, X}(g)\le r$ is finite. (The translation number is constant on
each conjugacy class.)
\end{enumerate}

There are several results on translation numbers in
geometric and combinatorial groups.
Biautomatic groups are translation separable~\cite{GS91}.
Word hyperbolic groups are strongly translation discrete,
and moreover, the translation numbers in a word hyperbolic group are rational
with uniformly bounded denominators~\cite{Gro87,BGSS91,Swe95}.
Artin groups of finite type are translation discrete~\cite{Bes99}.
Garside groups are strongly translation discrete,
and the translation numbers are rational with uniformly
bounded denominators~\cite{CMW04,Lee07, LL07}.

Translation numbers of the braid groups
$B(\infty,\infty,r)$ and $B(de,e,r)$ have the
following properties.

\begin{theorem}
The braid group $B(\infty,\infty,r)$ is translation discrete, and
the braid group $B(de,e,r)$ is strongly translation discrete.
\end{theorem}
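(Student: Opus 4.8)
The plan is to derive the whole statement from the single cited input that the Garside group $B({\mathbf{B}}_r)$ is strongly translation discrete (\cite{CMW04,Lee07,LL07}), combined with the embeddings of Proposition~\ref{bbr}, which exhibit both $B(\infty,\infty,r)$ and $B(de,e,r)$ as finitely generated subgroups of $B({\mathbf{B}}_r)$. The one elementary ingredient I would isolate first is a one-sided comparison of translation numbers under inclusion: if $H\le G$ are generated by finite sets $Y$ and $X$ and $L=\max_{y\in Y}|y|_X$, then $|h^n|_X\le L\,|h^n|_Y$ for all $h\in H$ and $n\ge 1$, so that $t_{G,X}(h)\le L\,t_{H,Y}(h)$. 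Crucially this inequality needs no assumption on the index $[G:H]$.

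First I would establish that both groups are translation discrete. Fix a finite generating set $X$ of $B({\mathbf{B}}_r)$ and a finite generating set $Y$ of the subgroup $H\in\{B(\infty,\infty,r),B(de,e,r)\}$, and put $L=\max_{y\in Y}|y|_X$. Since $B({\mathbf{B}}_r)$ is a Garside group it is torsion-free, and being strongly translation discrete (in particular translation discrete) it has $0$ isolated in $t_{B({\mathbf{B}}_r),X}(B({\mathbf{B}}_r))$; hence there is $\epsilon_0>0$ with $t_{B({\mathbf{B}}_r),X}(g)\ge\epsilon_0$ for every $g\ne 1$. As $H$ is a subgroup of the torsion-free group $B({\mathbf{B}}_r)$, a non-torsion element of $H$ is simply a non-trivial one, and for such $h$ the comparison gives
$$t_{H,Y}(h)\ \ge\ \frac{1}{L}\,t_{B({\mathbf{B}}_r),X}(h)\ \ge\ \frac{\epsilon_0}{L}\ >\ 0.$$
Thus $H$ is translation separable and $0$ is isolated in $t_{H,Y}(H)$, i.e.\ translation discrete. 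In particular this already proves the assertion for $B(\infty,\infty,r)$.

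It remains to upgrade $B(de,e,r)$ to strong translation discreteness, that is, to bound the number of conjugacy classes of translation number at most a given $\rho$. Here I would use that $B(de,e,r)$ has finite index $e$ in $B({\mathbf{B}}_r)$ (Corollary~\ref{cor:GeoBr}) via the embedding $\iota_1^e$. Since the translation number is a conjugacy invariant, let $[g]$ range over conjugacy classes of $B(de,e,r)$ with $t_{B(de,e,r)}(g)\le\rho$; the comparison above yields $t_{B({\mathbf{B}}_r)}(\iota_1^e(g))\le L\rho$, so $\iota_1^e$ carries these into the finitely many conjugacy classes of $B({\mathbf{B}}_r)$ with translation number $\le L\rho$ (strong translation discreteness of $B({\mathbf{B}}_r)$). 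Finally, Proposition~\ref{prop:conj} shows that the induced map on conjugacy classes is at most $e$-to-one: if $\iota_1^e(g)$ and $\iota_1^e(h)$ are $B({\mathbf{B}}_r)$-conjugate then $g$ is conjugate to $\tau^k(h)$ in $B(de,e,r)$ for some $0\le k<e$, so each target class has at most $e$ preimages. Hence only finitely many conjugacy classes of $B(de,e,r)$ have translation number $\le\rho$, which is exactly strong translation discreteness.

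The place to be careful—and the reason the argument stops short of strong translation discreteness for $B(\infty,\infty,r)$—is this last step. Since $B(\infty,\infty,r)=\ker(\operatorname{wd})$ is of \emph{infinite} index in $B({\mathbf{B}}_r)$, a single $B({\mathbf{B}}_r)$-conjugacy class typically meets infinitely many $B(\infty,\infty,r)$-conjugacy classes, so the counting collapses and only translation discreteness survives. I expect the only real (and minor) obstacle to be running the translation-number comparison in the correct direction—one wants the subgroup number bounded \emph{below} and the overgroup number bounded \emph{above}—after which Proposition~\ref{prop:conj} supplies precisely the finiteness needed.
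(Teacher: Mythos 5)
Your proposal is correct and follows essentially the same route as the paper: a one-sided comparison of translation numbers between $B(de,e,r)$ and the ambient group $B({\mathbf{B}}_r)\cong B(d,1,r)$, followed by Proposition~\ref{prop:conj} to see that the induced map on conjugacy classes is finite-to-one. The only cosmetic differences are that the paper arranges the generating sets so that the Lipschitz constant is $1$ (taking $Y=X\cup\{\sigma_1^2\}$) and cites Conner for the fact that subgroups of translation discrete groups are translation discrete, whereas you prove that fact directly.
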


\begin{proof}
It is known that a subgroup of a translation discrete group
is translation discrete~\cite{Con98}.
Since $B(d,1,r)\cong B({\mathbf{B}}_r)$ is strongly translation discrete~\cite{Lee07}
and since $B(de,e,r)$ and $B(\infty,\infty,r)$ are subgroups of $B(d,1,r)$,
the groups $B(de,e,r)$ and $B(\infty,\infty,r)$ are translation discrete.

Let $G=B(d,1,r)$ and $H=B(de,e,r)$.
Identify $G$ and $H$ with $B_{r+1,1}$ and
$\{\, g\in B_{r+1,1}\mid \operatorname{wd}(g)\equiv 0 \mod{e} \,\}$, respectively.
Choose a finite set of generators, say $X$, for $H$.
Then $Y=X\cup\{\sigma_1^2\}$ is a finite set of generators for $G$.
Choose any real number $r$, and let
\begin{align*}
\mathcal A&=\{h\in H\mid t_{H,X}(h)\le r\},\\
\mathcal B&=\{g\in G \mid t_{G,Y}(g)\le r\}.
\end{align*}
Notice that for any $h\in H$, $|h|_Y\le |h|_X$, hence
$$
t_{G,Y}(h)\le t_{H,X}(h).
$$
Therefore $\mathcal A\subset\mathcal B$.
Because $G$ is strongly translation discrete,
there are finitely many conjugacy classes in $\mathcal B$.
Hence there are finitely many conjugacy classes in $\mathcal A$
by Proposition~\ref{prop:conj}.
\end{proof}

\subsection{Classification of periodic elements}
\label{sec:PeriodElt}

An element $g$ in a braid group is said to be \emph{periodic}
if it has a central power.
In this subsection, we classify periodic elements in $B(de,e,r)$.
Here the group $B(de,e,r)$ is regarded as a subgroup of the braid group $B_{r+1}$.

\subsubsection{Periodic elements in $B_{r+1}$ and $B_{r+1,1}$}

The center of the Artin braid group $B_{r+1}$ is
an infinite cyclic group generated by $\Delta^2$,
where $\Delta=\sigma_1(\sigma_2\sigma_1)\ldots(\sigma_r\cdots\sigma_1)$.
It is a classical theorem of Brouwer, Ker\'ekj\'art\'o and
Eilenberg~\cite{Bro19, Ker19, Eil34} that an $(r+1)$-braid is periodic
if and only if it is conjugate
to a power of either $\delta$ or $\varepsilon$,
where  $\delta=\sigma_r\cdots\sigma_1$ and $\varepsilon=\delta\sigma_1$.
See Figure~\ref{fig:epsilon}(a,b).

The center of $B_{r+1,1}\cong B({\mathbf{B}}_r)\cong B(d,1,r)$ is
also the infinite cyclic group generated by $\Delta^2$,
and every periodic element of $B({\mathbf{B}}_r)$ is conjugate to a power of $\varepsilon$.

\begin{figure}
$$\begin{array}{ccccc}
\includegraphics[scale=.8]{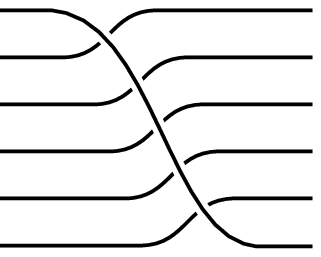} &\qquad&
\includegraphics[scale=.8]{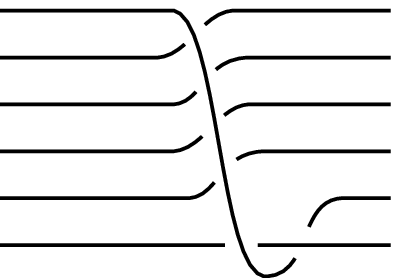} &\qquad&
\includegraphics[scale=.8]{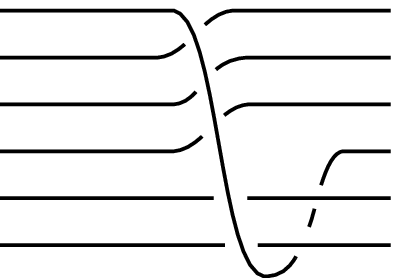}\\
\mbox{(a) The braid $\delta$} &&
\mbox{(b) The braid $\varepsilon$} &&
\mbox{(c) The braid $\varepsilon_1$}
\end{array}
$$
\caption{Braid pictures for $\delta$, $\varepsilon$ and $\varepsilon_1$ in $B_6$}
\label{fig:epsilon}
\end{figure}

Similar statements hold for the braid groups of other finite types
and the braid group $B(e,e,r)$.
For example, see~\cite{LL11}.
Bessis~\cite{Bes06b} explored many
important properties of periodic elements
in the context of braid groups of complex reflection
groups.

\subsubsection{Periodic elements in $B(de,e,r)$}
\label{ssec:per}

The center of $B(de,e,r)$ is an infinite cyclic group generated by
$$
\Delta_{(de,e,r)}=z^{\frac{r}{e\wedge r}} (At_1t_0)^{\frac{e(r-1)}{e\wedge r}},
$$
where $A=s_r\cdots s_3\in B(de,e,r)$~\cite{BMR98}.

\begin{lemma}\label{lem:per-eqns}
Let $A=s_r\cdots s_3\in B(de,e,r)$.
Then the following hold.
\begin{enumerate}
\item[(i)] $At_kAt_{k-1}\cdots At_1=\sigma_1^{-2k}\varepsilon^k$ for all $k\in{\mathbb Z}$.
\item[(ii)] $(At_1t_0)^{r-1}=\sigma_1^{-2r}\Delta^2$.
\item[(iii)] $(At_1t_0)^{r-1}=At_{j+r}\cdots At_{j+1}$ for all $j\in{\mathbb Z}$.
\end{enumerate}
\end{lemma}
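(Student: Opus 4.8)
The plan is to translate everything into the Artin braid group $B_{r+1}$ via the isomorphism of Corollary~\ref{cor:GeoBr}, under which $A=s_r\cdots s_3\mapsto\sigma_r\cdots\sigma_3$ and $t_i\mapsto\sigma_1^{-2i}\sigma_2\sigma_1^{2i}$, and then to reduce all three identities to a single telescoping computation. The key observation I would establish first is a closed form for each factor $At_i$. Since $\sigma_3,\dots,\sigma_r$ all commute with $\sigma_1$, the element $A$ commutes with every power of $\sigma_1$, so
\[
At_i=\sigma_1^{-2i}(\sigma_r\cdots\sigma_3\,\sigma_2)\sigma_1^{2i}=\sigma_1^{-2i}(\sigma_r\cdots\sigma_2)\sigma_1^{2i}.
\]
Because $\varepsilon=\delta\sigma_1=\sigma_r\cdots\sigma_2\,\sigma_1^2$, we have $\sigma_r\cdots\sigma_2=\varepsilon\sigma_1^{-2}$, and hence the clean identity
\[
At_i=\sigma_1^{-2i}\,\varepsilon\,\sigma_1^{2(i-1)}\qquad\text{for all }i\in{\mathbb Z}.
\]
This single formula is the engine for the whole lemma: it says $At_i$ is $\varepsilon$ conjugated by a power of $\sigma_1^2$, so that in any product of consecutive factors the $\sigma_1$-powers cancel in pairs.

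For (i) I would multiply the factors $At_k,At_{k-1},\dots,At_1$ and watch the powers of $\sigma_1$ telescope: the right tail $\sigma_1^{2(i-1)}$ of the $i$-th factor cancels the left head $\sigma_1^{-2(i-1)}$ of the $(i-1)$-th factor. Formally this is an induction on $k$ whose step, valid for every integer $k$, reads $At_k\cdot\bigl(\sigma_1^{-2(k-1)}\varepsilon^{k-1}\bigr)=\sigma_1^{-2k}\varepsilon^k$; running it both upward and downward from the base case $k=0$ (the empty product) yields $At_kAt_{k-1}\cdots At_1=\sigma_1^{-2k}\varepsilon^k$ for all $k\in{\mathbb Z}$, which is exactly (i).

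The remaining two parts rest on the classical fact that the periodic braid $\varepsilon$ satisfies $\varepsilon^{r}=\Delta^2$ in $B_{r+1}$, so that $\varepsilon^r$ is central. Taking $k=r$ in (i) gives $At_r\cdots At_1=\sigma_1^{-2r}\varepsilon^r=\sigma_1^{-2r}\Delta^2$; combining this with the identity $(At_1t_0)^{r-1}=At_rAt_{r-1}\cdots At_1$ already established in \S\ref{ssec:GarsideElt} (recall $\Lambda=(At_1t_0)^{r-1}$) immediately delivers (ii). For (iii) the same telescoping applied to the length-$r$ product starting at index $j$ gives $At_{j+r}\cdots At_{j+1}=\sigma_1^{-2(j+r)}\varepsilon^{r}\sigma_1^{2j}$; moving the central factor $\varepsilon^r=\Delta^2$ past $\sigma_1^{2j}$ collapses this to $\sigma_1^{-2r}\Delta^2$, independent of $j$ and equal to $(At_1t_0)^{r-1}$ by (ii). I expect no serious obstacle beyond the setup: the entire argument is formal once one has the closed form $At_i=\sigma_1^{-2i}\varepsilon\sigma_1^{2(i-1)}$, so the only genuine content is recognizing that the definition $\varepsilon=\sigma_r\cdots\sigma_2\sigma_1^2$ lets $At_i$ be written as a conjugate of $\varepsilon$, and invoking the centrality of $\varepsilon^r$.
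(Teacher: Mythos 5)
Your proof is correct, and part (i) is identical to the paper's: both derive the closed form $At_i=\sigma_1^{-2i}\varepsilon\sigma_1^{2(i-1)}$ from the commutation of $A$ with $\sigma_1$ and telescope. Where you diverge is in (ii) and (iii). The paper proves (ii) self-contained\-ly by introducing the auxiliary braid $\varepsilon_1=A\sigma_2\sigma_1^2\sigma_2$, observing geometrically that $\varepsilon_1^{r-1}=\sigma_1^{-2}\Delta^2$, and computing $(At_1t_0)^{r-1}=(\sigma_1^{-2}\varepsilon_1)^{r-1}=\sigma_1^{-2r}\Delta^2$; it then obtains (iii) by setting $k=r$ in (i), invoking $\varepsilon^r=\Delta^2$, and applying the automorphism $\tau^j$ (which fixes $\sigma_1^{-2r}\Delta^2$) to the resulting identity $At_r\cdots At_1=\sigma_1^{-2r}\Delta^2$. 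You instead import the identity $(At_1t_0)^{r-1}=\Lambda\doteq At_r\cdots At_1$ from the Garside-element computation in \S\ref{ssec:GarsideElt} and combine it with the $k=r$ case of (i) and $\varepsilon^r=\Delta^2$ to get (ii), then handle (iii) by re-telescoping the shifted product and using centrality of $\Delta^2$ --- which amounts to the same conjugation by $\sigma_1^{2j}$ that $\tau^j$ performs. Both routes are sound; yours trades the short geometric observation about $\varepsilon_1$ for a dependence on the earlier (and more laborious) monoid lemma, while the paper's choice has the side benefit that the identity $At_1t_0=\sigma_1^{-2}\varepsilon_1$ established in its proof is reused immediately afterwards to show $\Delta_{(de,e,r)}=(\Delta^2)^{e/(e\wedge r)}$. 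One small caveat you share with the paper: for $k\le 0$ the product in (i) needs to be read as the appropriate inverse of an ascending product, but only the case $k=r$ is ever used.
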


\begin{proof}
(i)\ \
Recall that $t_i=\sigma_1^{-2i}\sigma_2\sigma_1^{2i}$ for all $i\in{\mathbb Z}$.
Since for every $i\in{\mathbb Z}$
$$
At_i=A\sigma_1^{-2i}\sigma_2\sigma_1^{2i}
=\sigma_1^{-2i} A\sigma_2\sigma_1^2\sigma_1^{2(i-1)}
=\sigma_1^{-2i} \varepsilon\sigma_1^{2(i-1)},
$$
we have for every $k\in{\mathbb Z}$
$$
At_k At_{k-1}\cdots At_1
= (\sigma_1^{-2k} \varepsilon\sigma_1^{2(k-1)})
(\sigma_1^{-2(k-1)} \varepsilon\sigma_1^{2(k-2)}) \cdots
(\sigma_1^{-2} \varepsilon)
=\sigma_1^{-2k}\varepsilon^k.
$$

(ii)\ \
Let $\varepsilon_1=(\sigma_r\cdots\sigma_2\sigma_1)\sigma_1\sigma_2=A\sigma_2\sigma_1^2\sigma_2$.
See Figure~\ref{fig:epsilon}(c).
Geometrically, $\varepsilon_1$ is the $1/(r-1)$-twist around the first two strings.
Hence $\varepsilon_1^{r-1}$ is the full twist except that
the first two strings are straight, that is,
$$
\varepsilon_1^{r-1}=\sigma_1^{-2}\Delta^2.
$$
Also notice that $\sigma_1$ commutes with $\varepsilon_1$ and $A$. Since
$$
At_1t_0=A (\sigma_1^{-2}\sigma_2\sigma_1^2)\sigma_2
=\sigma_1^{-2}A\sigma_2\sigma_1^2\sigma_2
=\sigma_1^{-2}\varepsilon_1,
$$
we have
$$
(At_1t_0)^{r-1}
=(\sigma_1^{-2}\varepsilon_1)^{r-1}
= \sigma_1^{-2(r-1)}\varepsilon_1^{r-1}
= \sigma_1^{-2(r-1)}\sigma_1^{-2}\Delta^2
= \sigma_1^{-2r}\Delta^2.
$$

(iii)\ \
Setting $k=r$ to (i), we have
\begin{equation}\label{eq:A_t1t0}
At_r\cdots At_1=\sigma_1^{-2r}\varepsilon^r
=\sigma_1^{-2r}\Delta^2.
\end{equation}

Notice that $\tau(A)=A$, $\tau(t_k)=t_{k+1}$ and $\tau(t_1t_0)=t_2t_1=t_1t_0$.
Applying $\tau^j$ to  both sides of \eqref{eq:A_t1t0} and using (ii), we obtain (iii).
\end{proof}

Recall $At_1t_0=\sigma_1^{-2}\varepsilon_1$ from the proof of Lemma~\ref{lem:per-eqns}.
Then the generator $\Delta_{(de,e,r)}$ of the center of $B(de,e,r)$ is
written as
\begin{align*}
\Delta_{(de,e,r)}
&= z^{\frac{r}{e\wedge r}} (At_1t_0)^{\frac{e(r-1)}{e\wedge r}}
= (\sigma_1^{2e})^{\frac{r}{e\wedge r}} (\sigma_1^{-2}\varepsilon_1)^{\frac{e(r-1)}{e\wedge r}}\\
&= (\sigma_1^2)^{\frac{e}{e\wedge r}} (\varepsilon_1^{r-1})^{\frac{e}{e\wedge r}}
= (\sigma_1^2)^{\frac{e}{e\wedge r}} (\sigma_1^{-2}\Delta^2)^{\frac{e}{e\wedge r}}
= (\Delta^2)^{\frac{e}{e\wedge r}}.
\end{align*}
Therefore $\Delta_{(de,e,r)}$ is a power of $\Delta^2$.
Note that $\operatorname{wd}(\Delta_{(de,e,r)})=\frac{er}{e\wedge r}=e\vee r$.
In other words, the center of $B(de,e,r)$ is the intersection of $B(de,e,r)$
and the center of $B_{r+1,1}$.
Hence we have the following.

\begin{proposition}\label{prop:per-Bdeer-Br}
An element $g\in B(de,e,r)$ is periodic in $B(de,e,r)$ if and only if
it is periodic in $B_{r+1,1}$.
\end{proposition}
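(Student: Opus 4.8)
The plan is to exploit the computation, carried out just before the proposition, that the generator $\Delta_{(de,e,r)}$ of the center of $B(de,e,r)$ equals $(\Delta^2)^{e/(e\wedge r)}$, a power of the full twist $\Delta^2$ that generates the center of $B_{r+1,1}$. The key structural fact established there is that the center of $B(de,e,r)$ is precisely the intersection of $B(de,e,r)$ with the center $Z(B_{r+1,1}) = \langle \Delta^2\rangle$. An element $g$ is periodic in a group $G$ exactly when some nonzero power $g^m$ lies in $Z(G)$, so the whole statement should reduce to comparing the two centers through this intersection relation.

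First I would prove the forward direction, which is immediate. Suppose $g\in B(de,e,r)$ is periodic in $B(de,e,r)$. Then $g^m \in Z(B(de,e,r)) = \langle \Delta_{(de,e,r)}\rangle$ for some $m\neq 0$. Since $\Delta_{(de,e,r)}$ is a power of $\Delta^2$, we have $g^m \in \langle \Delta^2\rangle = Z(B_{r+1,1})$, so $g$ is periodic in $B_{r+1,1}$ as well.

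For the converse, suppose $g\in B(de,e,r)$ is periodic in $B_{r+1,1}$, so $g^m \in Z(B_{r+1,1}) = \langle \Delta^2\rangle$ for some nonzero $m$; write $g^m = (\Delta^2)^k$. The task is to produce a nonzero power of $g$ lying in the smaller center $\langle\Delta_{(de,e,r)}\rangle = \langle (\Delta^2)^{e/(e\wedge r)}\rangle$. The natural device is the winding number $\operatorname{wd}$: since $g\in B(de,e,r)$ we have $\operatorname{wd}(g)\equiv 0 \pmod e$, and $\operatorname{wd}$ is a homomorphism, so $\operatorname{wd}(g^m)=m\operatorname{wd}(g)$ is divisible by $e$; on the other hand $\operatorname{wd}((\Delta^2)^k)=k\,\operatorname{wd}(\Delta^2)=kr$ using that $\operatorname{wd}(\Delta_{(de,e,r)})=e\vee r$ together with $\Delta_{(de,e,r)}=(\Delta^2)^{e/(e\wedge r)}$, which gives $\operatorname{wd}(\Delta^2)=r$. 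The congruence $e \mid kr$ should force $k$ to be a multiple of $e/(e\wedge r)$, so that $(\Delta^2)^k$ is already a power of $\Delta_{(de,e,r)}$ and hence lies in $Z(B(de,e,r))$; then $g^m\in Z(B(de,e,r))$ exhibits $g$ as periodic in $B(de,e,r)$.

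The main obstacle, and the step deserving the most care, is the converse's divisibility bookkeeping: one must check that $e \mid kr$ genuinely implies $(e\wedge r)\mid k(e\wedge r)\cdot\tfrac{r}{e\wedge r}$ in the right way, i.e.\ that $\tfrac{e}{e\wedge r}$ divides $k$, using that $\tfrac{e}{e\wedge r}$ and $\tfrac{r}{e\wedge r}$ are coprime. This is elementary number theory but is exactly where the parameter $e\wedge r$ enters, so it should be stated explicitly rather than waved through. Everything else follows formally from the identification of $Z(B(de,e,r))$ as $\langle\Delta^2\rangle \cap B(de,e,r)$, which has already been recorded in the text.
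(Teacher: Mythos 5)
Your proof is correct and follows essentially the same route as the paper, which derives the proposition directly from the identification $Z(B(de,e,r))=B(de,e,r)\cap\langle\Delta^2\rangle$ established in the preceding computation of $\Delta_{(de,e,r)}=(\Delta^2)^{e/(e\wedge r)}$. Your explicit winding-number divisibility check ($e\mid kr$ forces $\tfrac{e}{e\wedge r}\mid k$) is exactly the verification the paper leaves implicit in the phrase ``the center of $B(de,e,r)$ is the intersection of $B(de,e,r)$ and the center of $B_{r+1,1}$,'' so nothing further is needed.
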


Now we classify periodic elements in $B(de,e,r)$.

\begin{definition}
Set $\lambda=zAt_eAt_{e-1}\cdots At_1\in B(de,e,r)$.
\end{definition}

Recall that $z=\sigma_1^{2e}$. By Lemma~\ref{lem:per-eqns},
$\lambda=\sigma_1^{2e}(\sigma_1^{-2e}\varepsilon^e)=\varepsilon^e$,
hence $\lambda$ is periodic in $B_{r+1,1}$.
By Proposition~\ref{prop:per-Bdeer-Br}, $\lambda$ is periodic in $B(de,e,r)$.
It follows also from
$$
\lambda^{\frac{r}{e\wedge r}}
=\varepsilon^{\frac{er}{e\wedge r}}
=(\Delta^2)^{\frac{e}{e\wedge r}}
=\Delta_{(de,e,r)}.
$$

\begin{theorem}\label{thm:periodic}
In $B(de,e,r)$, an element $g$ is periodic if and only if
$g$ is conjugate to a power of $\lambda$.
\end{theorem}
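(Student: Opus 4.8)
The plan is to prove both directions by transporting the problem into the ambient braid group $B_{r+1,1}$, where periodic elements are already classified, and then pulling the conclusion back into $B(de,e,r)$ via Proposition~\ref{prop:conj}. Throughout I identify $B(de,e,r)$ with $\{\,g\in B_{r+1,1}\mid \operatorname{wd}(g)\equiv 0\bmod e\,\}$ and use that $\lambda=\varepsilon^e$ with $\lambda^{r/(e\wedge r)}=\Delta_{(de,e,r)}$, both established just before the statement.

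The backward direction is immediate. Since $r/(e\wedge r)$ is a positive integer and $\lambda^{r/(e\wedge r)}=\Delta_{(de,e,r)}$ is central, every power satisfies $(\lambda^n)^{r/(e\wedge r)}=\Delta_{(de,e,r)}^n$, which is again central. If $g=y^{-1}\lambda^n y$ then $g^{r/(e\wedge r)}=y^{-1}\Delta_{(de,e,r)}^n y=\Delta_{(de,e,r)}^n$ is central, so $g$ is periodic.

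For the forward direction, suppose $g\in B(de,e,r)$ is periodic. First I would apply Proposition~\ref{prop:per-Bdeer-Br} to conclude that $g$ is periodic in $B_{r+1,1}\cong B(\mathbf{B}_r)$, and then invoke the recalled classification---every periodic element of $B(\mathbf{B}_r)$ is conjugate to a power of $\varepsilon$---to write $g$ as conjugate in $B_{r+1,1}$ to $\varepsilon^m$ for some integer $m$. Because $\operatorname{wd}\colon B_{r+1,1}\to{\mathbb Z}$ is a homomorphism, it is constant on conjugacy classes; and $\operatorname{wd}(\varepsilon)=1$ since $\varepsilon^r=\Delta^2$ (Lemma~\ref{lem:per-eqns}) while $\operatorname{wd}(\Delta^2)=r$. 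Hence $m=\operatorname{wd}(\varepsilon^m)=\operatorname{wd}(g)\equiv 0\bmod e$, so $m=en$ and $\varepsilon^m=\lambda^n$. Thus $g$ and $\lambda^n$ are conjugate in $B(d,1,r)$, and Proposition~\ref{prop:conj} produces some $0\le k<e$ with $g$ conjugate to $\tau^k(\lambda^n)$ \emph{in} $B(de,e,r)$.

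The hard part will be removing the residual twist $\tau^k$---that is, showing $\tau^k(\lambda^n)$ is conjugate to $\lambda^n$ inside $B(de,e,r)$, not merely in $B_{r+1,1}$ where this is trivial. The key is to choose a conjugator that commutes with $\lambda$ while carrying a winding number divisible by $e$. Concretely, set $x=\varepsilon^{-1}\sigma_1^2$; it is $1$-pure with $\operatorname{wd}(x)=-1+1=0$, so $x\in B(de,e,r)$, and since $\varepsilon$ commutes with $\lambda^n=\varepsilon^{en}$ one computes $x^{-1}\lambda^n x=\sigma_1^{-2}\varepsilon^{en}\sigma_1^2=\tau(\lambda^n)$, using $\tau(\,\cdot\,)=\sigma_1^{-2}(\,\cdot\,)\sigma_1^2$. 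Hence $\tau(\lambda^n)$ is conjugate to $\lambda^n$ in $B(de,e,r)$; since $\tau$ is an automorphism of $B(de,e,r)$ and therefore preserves its conjugacy classes, induction on $k$ gives that $\tau^k(\lambda^n)$ is conjugate to $\lambda^n$ for every $k$. Combining this with the previous paragraph shows $g$ is conjugate to the power $\lambda^n$ in $B(de,e,r)$, which completes the proof.
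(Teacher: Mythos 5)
Your proof is correct and follows essentially the same route as the paper's: reduce to periodicity in $B_{r+1,1}$ via Proposition~\ref{prop:per-Bdeer-Br}, use the classification of periodic elements there together with the winding number to land on $\varepsilon^{en}=\lambda^n$, pull back with Proposition~\ref{prop:conj}, and then kill the residual $\tau^k$ by exhibiting a conjugator in $B(de,e,r)$ realizing $\tau$ on $\lambda^n$. Your conjugator $x=\varepsilon^{-1}\sigma_1^2$ is in fact the same element $(At_1)^{-1}$ that the paper uses (since $At_1=\sigma_1^{-2}\varepsilon$), so the two arguments coincide up to notation.
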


\begin{proof}
First, notice that $\lambda$ is conjugate to $\tau(\lambda)$ in $B(de,e,r)$ because
$$
(At_1)\lambda(At_1)^{-1}
= At_1 z At_e\cdots At_2
= z At_{e+1}At_e\cdots At_2
= \tau(\lambda).
$$
Therefore $\lambda^q$ is conjugate to $\tau^k(\lambda^q)$ in $B(de,e,r)$
for all $k,q\in{\mathbb Z}$.

Suppose that $g$ is a periodic element in $B(de,e,r)$.
Then $g$ is periodic in $B_{r+1,1}$ by Proposition~\ref{prop:per-Bdeer-Br},
hence it is conjugate to $\varepsilon^p$ for some $p\in{\mathbb Z}$.
Since $\operatorname{wd}(g)=p\operatorname{wd}(\varepsilon)=p\equiv 0 \mod{e}$, $p=qe$ for some $q\in{\mathbb Z}$.
Then $g$ is conjugate to $\varepsilon^{qe}=\lambda^q$ in $B_{r+1,1}$.
Hence $g$ is conjugate to $\tau^k(\lambda^q)$ in $B(de, e, r)$ for some $0\le k<e$
by Proposition~\ref{prop:conj}.
Therefore $g$ is conjugate to $\lambda^q$ in $B(de,e,r)$.

The converse direction is obvious.
\end{proof}

\subsubsection{Comparison with the results of Bessis}

Here we assume $d,e,r\ge 2$.
We recall the results of Bessis~\cite{Bes06b} on periodic elements
in the braid groups associated with \emph{well-generated} complex reflection groups.
The reflection group $G(de,e,r)$ is \emph{not} well-generated.
However we will see that Bessis' results hold for $B(de,e,r)$
except the existence of the dual Garside element $\delta$.

Let $G$ be a complex reflection group on $V$.
Let $d_1\le d_2\le \cdots \le d_r$ be degrees of $G$
and $d_1^*\ge d_2^*\ge\cdots\ge d_r^*$ be codegrees of $G$.
The largest degree $d_r$ is called the \emph{Coxeter number} of $G$,
which we denote by $h$.
An integer $p$ is called a \emph{regular number}
if there exist an element $g\in G$ and a complex $p$-th root $\zeta$
of unity such that $\ker(g-\zeta)\cap V^{\mathrm{reg}}\ne\emptyset$,
where $V^{\mathrm{reg}}$ is the complement in $V$ of the reflecting hyperplanes of $G$.
The complex reflection group $G$ is called \emph{well-generated} if $G$ can be generated by
$r$ reflections.
It is known that $G$ is well-generated if and only if $G$ is a \emph{duality group},
i.e., $d_i+d_i^*=d_r$ for all $1\le i\le r$.

The following theorem collects Bessis' results on periodic elements in braid groups;
see Lemma 6.11 and Theorems 1.9, 8.2, 12.3, 12.5 in \cite{Bes06b}.
The equivalence between (a) and (b) in the theorem was proved by
Lehrer and Springer~\cite{LS99} and by Lehrer and Michel~\cite{LM03}, and it does not require
well-generatedness of the complex reflection group $G$.

\begin{theorem}[\cite{Bes06b}]\label{thm:bes06a}
Let\/ $G$ be an irreducible well-generated complex reflection group,
with degrees $d_1,\ldots,d_r$, codegrees $d_1^*,\ldots,d_r^*$ and Coxeter number $h$.
Then its braid group $B(G)$ admits the dual Garside structure with
Garside element $\delta$, and the following hold.

\begin{enumerate}
\item The element $\mu=\delta^h$ is central in $B(G)$ and lies in the pure braid group
of $G$.
\item Let $h'=h/(d_1\wedge\cdots\wedge d_r)$.
The center of $B(G)$ is a cyclic group generated by $\delta^{h'}$.
\item Let $p$ be a positive integer, and let
$$
A(p)=\{\, 1\le i\le r \mathrel{:} p\mid d_i\,\}
\quad\mbox{and}\quad
B(p)=\{\, 1\le i\le r \mathrel{:} p\mid d_i^* \,\}.
$$
Then $|A(p)|\le|B(p)|$, and the following conditions are equivalent:
\begin{enumerate}
\item $|A(p)|=|B(p)|$;
\item $p$ is regular;
\item there exists a $p$-th root of $\mu$.
\end{enumerate}
Moreover, the $p$-th root of $\mu$, if exists, is unique up to conjugacy in $B(G)$.
\end{enumerate}
\end{theorem}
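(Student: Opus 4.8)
The plan is to assemble this omnibus statement from three layers of existing theory, making the dependencies explicit rather than reproving anything, since each clause already appears in the cited works. The foundational input is Bessis' construction of the \emph{dual braid monoid} for a well-generated group $G$: fixing a $\zeta_h$-regular element $c\in G$ (a generalized Coxeter element), where $\zeta_h=e^{2\pi i/h}$, one builds a Garside element $\delta\in B(G)$ lifting $c$, whose simple elements form the interval $[1,c]$ in the absolute order. I would take this, together with the standard fact that conjugation by $\delta$ induces the rotation automorphism of the dual structure, of order $h$, as the starting point for everything else.

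For part (1), I would argue that since $\delta$ projects to the regular element $c$, which has order $h$ in $G$, the power $\mu=\delta^h$ projects to $c^h=1$ and therefore lies in the pure braid group (the kernel of $B(G)\to G$); centrality in $B(G)$ then follows because the rotation automorphism has order $h$, so $\delta^h$ fixes every simple and hence commutes with all of $B(G)$. For part (2) I would identify the least power of $\delta$ that is central by combining the order of the rotation automorphism with the condition that the image in $G$ become central; a degree computation produces the value $h'=h/(d_1\wedge\cdots\wedge d_r)$, giving the cyclic center $\langle\delta^{h'}\rangle$.

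Part (3) is the substantial clause and where I would expect the real work. The inequality $|A(p)|\le|B(p)|$, together with the equivalence of $|A(p)|=|B(p)|$ with regularity of $p$, is exactly the Lehrer--Springer/Lehrer--Michel characterization \cite{LS99,LM03}: via Springer's theory of regular elements one counts that the dimension of the $\zeta$-eigenspace of a regular element of order $p$ equals the number of degrees divisible by $p$, namely $|A(p)|$, while regularity of that eigenvalue forces the matching codegree count $|B(p)|$. I would stress that this half requires no well-generatedness. The equivalence with the existence of a $p$-th root of $\mu$, and the uniqueness of that root up to conjugacy, is Bessis' periodicity theorem \cite{Bes06b}: the projection $B(G)\to G$ carries a $p$-th root of $\mu$ to a $\zeta_p$-regular element, so existence matches regularity, and uniqueness follows from the Garside theory of periodic elements, in which the $p$-th roots of a fixed central element form a single conjugacy class, combined with Springer's result that regular elements sharing an eigenvalue are conjugate in $G$.

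The main obstacle, and the place I would spend most effort, is precisely this last existence-plus-uniqueness assertion in part (3): translating between the combinatorics of regular eigenvalues in $G$ and the conjugacy geometry of periodic elements in $B(G)$. The delicate point is that uniqueness of $p$-th roots is \emph{not} formal from Garside theory alone---it requires showing that the centralizer of a periodic element again carries a (dual) Garside structure governed by the fixed subspace $\ker(g-\zeta)$, which is itself the reflection arrangement of a smaller complex reflection group, so that Springer's conjugacy theorem can be applied in that reduced setting.
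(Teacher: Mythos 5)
Your proposal matches the paper's treatment: the paper gives no proof of its own but simply collects these statements from Lemma~6.11 and Theorems~1.9, 8.2, 12.3 and 12.5 of Bessis \cite{Bes06b}, together with the Lehrer--Springer and Lehrer--Michel results \cite{LS99,LM03} for the equivalence of (a) and (b), which is exactly the decomposition into sources (and the same remark that (a)$\Leftrightarrow$(b) needs no well-generatedness) that you describe. Your additional sketches of how each cited piece works are consistent with those references, so there is nothing to correct.
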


The Coxeter groups of types ${\mathbf{A}}_r, {\mathbf{B}}_r, {\mathbf D}_r, \mathbf{I}_2(e)$ and
the complex reflection group $G(e,e,r)$
are all irreducible well-generated complex reflection groups.
But $G(de,e,r)$ is \emph{not} well-generated,
hence the above theorem cannot be applied.

We remark that Bessis' results hold for $B(de,e,r)$ except the existence of $\delta$.
The degrees and codegrees of $G(de,e,r)$ are as follows~\cite{BMR98}:
\begin{align*}
\{d_1,\ldots, d_r\}   &= \{e, 2e,\ldots,(r-1)e, r\} = \{e,2e,\ldots,(r-1)e\}\cup\{r\}, \\
\{d_1^*,\ldots, d_r^*\} &= \{0, e, 2e,\ldots,(r-1)e\} = \{e,2e,\ldots,(r-1)e\}\cup\{0\}.
\end{align*}

Therefore
$d_1\wedge\cdots\wedge d_r=e\wedge r$, $h=e(r-1)$
and $h'=h/(d_1\wedge\cdots\wedge d_r)= e(r-1)/(e\wedge r)$.

\begin{enumerate}
\item Let $\mu=z^r(At_1t_0)^{e(r-1)}$. Then $\mu$ is central in $B(de,e,r)$.
\item The center of $B(de,e,r)$ is an infinite cyclic group
generated by $\Delta_{(de,e,r)}=z^{\frac{r}{e\wedge r}} (At_1t_0)^{\frac{e(r-1)}{e\wedge r}}$.
Notice that $(\Delta_{(de,e,r)})^{d_1\wedge\cdots\wedge d_r}
=(\Delta_{(de,e,r)})^{e\wedge r}=\mu$.

\item The following are equivalent.
\begin{enumerate}
\item $|A(p)|=|B(p)|$;
\item $p$ is regular;
\item there exists a $p$-th root of $\mu$;
\item $p\mid r$.
\end{enumerate}
Moreover, the $p$-th root of $\mu$, if exists, is unique up to conjugacy in $B(de,e,r)$.
\end{enumerate}
In the above, the equivalence between
(c) and (d) follows from Theorem~\ref{thm:periodic}
because $\lambda$ is an $r$-th root of $\mu$.
The other statements are immediate.

\bigskip\noindent
\textbf{Question.}\ \
Can we generalize the approach of Bessis in~\cite{Bes06b} to the braid group $B(de,e,r)$?

\section{$\widetilde{{\mathbf{A}}}$-type presentation for $B(de,e,r)$}
\label{sec:Atilde}

The presentation of $B(de,e,r)$ in Theorem~\ref{new-pre-deer}
describes the semidirect product decomposition
$B(de,e,r)\cong C^e_\infty\ltimes B(\infty,\infty,r)$:
the last two relations describe the action of $z$ on $B(\infty,\infty,r)$,
where $z$ is the generator of the infinite cyclic group $C^e_\infty$,
and the others are the relations of $B(\infty,\infty,r)$.

Because $B(\infty,\infty,r)\cong B({\widetilde{\mathbf A}_{r-1}}
)$,
the group $B(de,e,r)$ has an $\widetilde{{\mathbf{A}}}$-type presentation with
generators $\{s_1, s_2, \ldots, s_r \}$ of $B({\widetilde{\mathbf A}_{r-1}}
)$ along with $z$.
In this section, we explicitly compute this presentation.

Throughout this section, we assume $r\ge 3$
and regard the groups $B(\infty,\infty,r)$, $B({\widetilde{\mathbf A}_{r-1}}
)$ and $B(de,e,r)$
as subgroups of the braid group $B_{r+1}$,
hence $B({\widetilde{\mathbf A}_{r-1}}
)=B(\infty,\infty,r)\subset B(de,e,r)\subset B_{r+1}$.
Let $A$ and $B$ denote the braids
$$A=\sigma_r\sigma_{r-1}\cdots \sigma_3,\qquad
B=A\sigma_2=\sigma_r\sigma_{r-1}\cdots \sigma_3\sigma_2.$$

Recall from \S\ref{ssec:Embed_Bdeer_Br} that the generators
$z\in B(de,e,r)$ and $t_i, s_j\in B(\infty,\infty,r)$ are
\begin{align*}
z  &=\sigma_1^{2e},\\
t_i&=\sigma_1^{-2i}\sigma_2\sigma_1^{2i} \quad\text{for } i\in{\mathbb Z},\\
s_j&=\sigma_j \quad\text{for } 3\le j\le r.
\end{align*}
Recall from \S\ref{ssec:GeomBr} that
the generators $s_1,\ldots,s_r$ of $B(\widetilde{\mathbf{A}}_{r-1})$ are
\begin{align*}
s_1&=At_1A^{-1}=A\sigma_1^{-2}\sigma_2\sigma_1^2A^{-1},\\
s_j&=\sigma_j \quad\text{for } 2\le j\le r.
\end{align*}
Note that $A,B\in B({\widetilde{\mathbf A}_{r-1}}
)$.
See Figure~\ref{fig:br-pic} for the braid pictures of $s_1$, $t_1$ and $t_2$.

\begin{figure}
\begin{tabular}{cccccc}
\includegraphics[scale=.7]{braid-s1.eps}&\qquad&
\includegraphics[scale=.7]{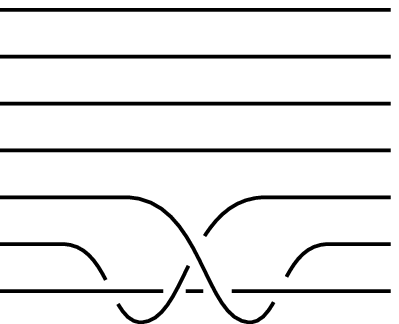}&\qquad&
\includegraphics[scale=.7]{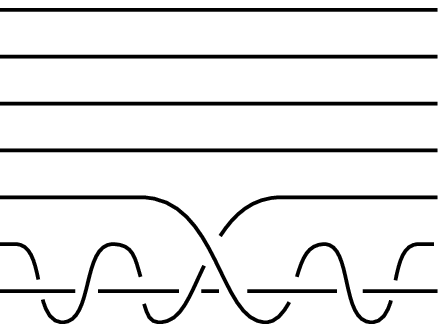}\\
(a) $s_1=At_1A^{-1}$&&
(b) $t_1=\sigma_1^{-2}\sigma_2\sigma_1^2$ &&
(c) $t_2=\sigma_1^{-4}\sigma_2\sigma_1^4$
\end{tabular}
\caption{Braid pictures for $s_1$, $t_1$ and $t_2$ when $r=6$}
\label{fig:br-pic}
\end{figure}

Recall the diagrams for the usual (Artin) presentation of $B({\widetilde{\mathbf A}_{r-1}}
)$
and the new presentation of $B(\infty,\infty,r)$ shown
in Figures~\ref{fig:Atilde_Coxeter} and \ref{fig:Binfty}(b).
These diagrams give natural diagram automorphisms $\kappa$ and $\tau$,
obtained respectively by rotating or by shifting by one node the diagram.
The automorphism $\kappa:B({\widetilde{\mathbf A}_{r-1}}
)\to B({\widetilde{\mathbf A}_{r-1}}
)$ is given by
$s_j\mapsto s_{j+1\bmod r}$ for $j\in{\mathbb Z}/r$.
It is not hard to see from braid relations that $\kappa$ is the conjugation by
$\varepsilon=\sigma_r\cdots\sigma_2\sigma_1^2=A\sigma_2\sigma_1^2=B\sigma_1^2$, that is,
$$\kappa(g)=\varepsilon^{-1}g\varepsilon
\qquad\text{for } g\in B({\widetilde{\mathbf A}_{r-1}}
).
$$
The automorphism $\tau:B(\infty,\infty,r)\to B(\infty,\infty,r)$ is given by
$t_i\mapsto t_{i+1}$ and $s_j\mapsto s_j$ for $i\in{\mathbb Z}$ and $3\le j\le r$.
It is the conjugation by $\sigma_1^2$,
that is,
$$
\tau(g)=\sigma_1^{-2}g\sigma_1^2
\qquad\text{for } g\in B(\infty,\infty,r).
$$
Since $\varepsilon=B\sigma_1^2$, $\kappa$ and $\tau$ are related by
$$\tau(g)=\kappa(BgB^{-1})
\qquad\text{for } g\in B(\infty,\infty,r)=B({\widetilde{\mathbf A}_{r-1}}
).
$$

\begin{lemma}\label{lem:com}
The braids $s_1B$ and $s_j$ for $3\le j\le r$ commute with $\sigma_1$,
hence $\tau(s_1B)=s_1B$ and $\tau(s_j)=s_j$ for $3\le j\le r$.
\end{lemma}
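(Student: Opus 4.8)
The plan is to verify each commutation directly by reducing to the standard Artin relations in $B_{r+1}$, and then to read off the statement about $\tau$ from the fact that $\tau$ is conjugation by $\sigma_1^2$.

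First I would dispose of the generators $s_j=\sigma_j$ for $3\le j\le r$: since $|1-j|\ge 2$, the far-commutation relation $\sigma_1\sigma_j=\sigma_j\sigma_1$ gives the claim at once. The same observation shows that $A=\sigma_r\sigma_{r-1}\cdots\sigma_3$, being a word in $\sigma_3,\ldots,\sigma_r$, commutes with $\sigma_1$; I will use this below.

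Next I would simplify $s_1B$. Using $B=A\sigma_2$ and $s_1=A\sigma_1^{-2}\sigma_2\sigma_1^2A^{-1}$, the factors $A^{-1}$ and $A$ cancel, so
$$s_1B=A\sigma_1^{-2}\sigma_2\sigma_1^2\sigma_2.$$
Because $A$ commutes with $\sigma_1$, checking that $s_1B$ commutes with $\sigma_1$ reduces (after cancelling $A$) to the two-generator identity
$$\sigma_1\,\sigma_1^{-2}\sigma_2\sigma_1^2\sigma_2 = \sigma_1^{-2}\sigma_2\sigma_1^2\sigma_2\,\sigma_1,$$
which, upon multiplying through on the left by $\sigma_1^2$, is equivalent to $\sigma_1\sigma_2\sigma_1^2\sigma_2=\sigma_2\sigma_1^2\sigma_2\sigma_1$. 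I would then verify this last identity using only the relation $\sigma_1\sigma_2\sigma_1=\sigma_2\sigma_1\sigma_2$: grouping the left side as $(\sigma_1\sigma_2\sigma_1)(\sigma_1\sigma_2)$ and the right side as $(\sigma_2\sigma_1)(\sigma_1\sigma_2\sigma_1)$, both rewrite as $\sigma_2\sigma_1\sigma_2\sigma_1\sigma_2$.

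Finally, for the ``hence'' clause, I would recall that $\tau$ acts on $B(\infty,\infty,r)$ by $\tau(g)=\sigma_1^{-2}g\sigma_1^2$; since $s_1B$ and each $s_j$ with $3\le j\le r$ commute with $\sigma_1$, they commute with $\sigma_1^2$ and are therefore fixed by $\tau$. There is no genuine obstacle here: the whole argument is a short braid-word verification, and the only point requiring care is the placement of the braid relation in the crux identity, where one must group the three-letter subword correctly on each side.
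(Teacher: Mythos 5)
Your proof is correct and takes essentially the same route as the paper: both begin by cancelling $A^{-1}A$ to get $s_1B=A\sigma_1^{-2}\sigma_2\sigma_1^2\sigma_2$ and both handle $s_j=\sigma_j$ ($3\le j\le r$) by far commutation. The only difference is at the last step, where the paper recognizes $A\sigma_2\sigma_1^2\sigma_2$ as the braid $\varepsilon_1$ and quotes the (geometrically evident) fact that $\varepsilon_1$ commutes with $\sigma_1$, whereas you verify the equivalent identity $\sigma_1\sigma_2\sigma_1^2\sigma_2=\sigma_2\sigma_1^2\sigma_2\sigma_1$ directly from the braid relation --- a correct and slightly more self-contained variant.
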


\begin{proof}
Note that $s_1B=(A\sigma_1^{-2}\sigma_2\sigma_1^2A^{-1}) (A\sigma_2)
= A\sigma_1^{-2}\sigma_2\sigma_1^2 \sigma_2
= \sigma_1^{-2} A\sigma_2\sigma_1^2 \sigma_2
= \sigma_1^{-2}\varepsilon_1$,
where the braid $\varepsilon_1$ is illustrated in Figure~\ref{fig:epsilon}(c).
Since $\varepsilon_1$ commutes with $\sigma_1$, so does $s_1B$.
It is obvious that $\sigma_1$ commutes with $s_j=\sigma_j$ for $3\le j\le r$.
\end{proof}

\begin{lemma}\label{lem:s2}
For any $k\ge 1$,
\begin{align*}
\tau^k(s_2)
&=\kappa(B)\kappa^2(B)\cdots\kappa^k(B)
\kappa^k(A^{-1})\kappa^{k-1}(B^{-1})\cdots\kappa(B^{-1})\\
&=\langle s_1B\rangle^{k(r-1)} \left( \langle s_1B\rangle^{k(r-1)-1}\right)^{-1}.
\end{align*}
\end{lemma}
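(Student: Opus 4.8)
The plan is to establish the two displayed equalities in turn, the first rewriting $\tau^k(s_2)$ through the rotation $\kappa$ and the second matching that with the cyclic product. For the first equality I would start from the relation $\tau(g)=\kappa(BgB^{-1})$ recorded just before the lemma, which says $\tau=\kappa\circ\phi$ on $B(\infty,\infty,r)=B(\widetilde{\mathbf{A}}_{r-1})$, where $\phi(g)=BgB^{-1}$. Since $\kappa$ is an automorphism, a short induction on $k$ gives
\[
\tau^k(g)=\bigl(\kappa(B)\kappa^2(B)\cdots\kappa^k(B)\bigr)\,\kappa^k(g)\,\bigl(\kappa^k(B)^{-1}\cdots\kappa(B)^{-1}\bigr)
\]
for every $g$. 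Specializing to $g=s_2$ and using $B=A\sigma_2$, so that $s_2=A^{-1}B$ and hence $\kappa^k(s_2)\,\kappa^k(B)^{-1}=\kappa^k(A^{-1})$, I would substitute and cancel the adjacent pair $\kappa^k(B)\kappa^k(B)^{-1}$ to recover exactly the first displayed expression. This part is purely formal.

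The second equality is combinatorial. The crucial point is to read $s_1B$ as the length-$r$ word $s_1s_rs_{r-1}\cdots s_2$ in the generators $s_1,\ldots,s_r$, so that $\langle s_1B\rangle^m$ is the initial segment of length $m$ of the period-$r$ word $(s_1s_rs_{r-1}\cdots s_2)^{\infty}$. I would group the first $k(r-1)$ letters into $k$ consecutive blocks of length $r-1$. Using $\kappa(s_i)=s_{i+1\bmod r}$, the $j$-th block is the decreasing run $s_js_{j-1}\cdots s_{j-(r-2)}$ with indices read mod $r$, which is precisely $\kappa^j(B)$; the shift by one generator from each block to the next is forced by the mismatch between the block length $r-1$ and the period $r$. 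This gives $\langle s_1B\rangle^{k(r-1)}=\kappa(B)\kappa^2(B)\cdots\kappa^k(B)$. Deleting the last letter, which is $\kappa^k(s_2)=s_{k+2\bmod r}$, and using $\kappa^k(B)=\kappa^k(A)\,\kappa^k(s_2)$ coming from $B=A\sigma_2$, gives $\langle s_1B\rangle^{k(r-1)-1}=\kappa(B)\cdots\kappa^{k-1}(B)\,\kappa^k(A)$. Forming the quotient of these two and recognizing the resulting tail as $\kappa^k(A^{-1})\kappa^{k-1}(B^{-1})\cdots\kappa(B^{-1})$ reproduces the $\kappa$-expression of the first part, closing the argument.

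The only genuine obstacle I anticipate is the index bookkeeping in the second paragraph: one must pin down the correct reading of the cyclic-product notation and track residues mod $r$ carefully enough to see that the block of length $r-1$ beginning at position $(j-1)(r-1)+1$ coincides, letter for letter, with $\kappa^j(B)$. Once this alignment is in place, neither equality requires the braid relations beyond the identity $B=A\sigma_2$ and the prescribed action of $\kappa$ on the generators.
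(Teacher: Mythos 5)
Your proposal is correct and follows essentially the same route as the paper: the first equality by induction on $k$ using $\tau(g)=\kappa(BgB^{-1})$ (you merely defer the specialization to $g=s_2$, via $s_2B^{-1}=A^{-1}$, to the end rather than building it into the base case $\tau(s_2)=\kappa(BA^{-1})$), and the second equality by the block-of-length-$(r-1)$ reading of $\langle s_1B\rangle^{k(r-1)}$, which is exactly the ``straightforward computation'' the paper illustrates with the $k=2$ example. Your index bookkeeping for the blocks is right, so no gap remains.
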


\begin{proof}
We prove the first equality by induction on $k$.
The case $k=1$ is true since
$$\tau(s_2)
=\kappa(Bs_2B^{-1})
=\kappa(Bs_2(As_2)^{-1})
=\kappa(BA^{-1})=
\kappa(B)\kappa(A^{-1}).
$$
Assume that the assertion is true for some $k\ge 1$. Then
\begin{align*}
\tau^{k+1}(s_2)
&=\tau (\tau^k(s_2)) = \kappa(B\tau^k(s_2) B^{-1})\\
&= \kappa(B \cdot \kappa(B)\cdots\kappa^k(B)
\kappa^k(A^{-1})\kappa^{k-1}(B^{-1})\cdots\kappa(B^{-1}) \cdot B^{-1})\\
&= \kappa(B)\kappa^2(B)\cdots \kappa^{k+1}(B)
    \kappa^{k+1}(A^{-1})\kappa^k(B^{-1})\cdots \kappa^2(B^{-1}) \kappa(B^{-1}).
\end{align*}
Therefore the assertion is true for $k+1$.

By a straightforward computation, one can easily see that
$\kappa(B)\kappa^2(B)\cdots\kappa^k(B)=\langle s_1B\rangle^{k(r-1)}$
and that $\kappa(B)\kappa^2(B)\cdots\kappa^{k-1}(B)\kappa^k(A)=\langle s_1B\rangle^{k(r-1)-1}$.
For example,
\begin{align*}
\kappa(B)\kappa^2(B)
&=\kappa(s_r\cdots s_2)\kappa^2(s_r\cdots s_2)\\
&=(s_1s_rs_{r-1}\cdots s_3)(s_2s_1s_r\cdots s_4)=\langle s_1s_r\cdots s_2\rangle^{2(r-1)}\\
&=\langle s_1B\rangle^{2(r-1)}.
\end{align*}
Therefore the second equality is immediate.
\end{proof}

The action of $z$ on $B({\widetilde{\mathbf A}_{r-1}}
)$ is given by
$z^{-1}gz=\tau^e(g)$ for $g\in B({\widetilde{\mathbf A}_{r-1}}
)$.
Notice that $\{ s_1B, s_2, \ldots, s_r \}$ generates $B(\widetilde{\mathbf{A}}_{r-1})$.
Using Lemmas~\ref{lem:com} and \ref{lem:s2}, the action of $z$ on $B({\widetilde{\mathbf A}_{r-1}}
)$  can be described as
\begin{align*}
z^{-1}s_iz  &=\tau^e(s_i)=s_i \quad\text{for}\ 3\le i\le r,\\
z^{-1}s_1Bz &=\tau^e(s_1B)=s_1B,\\
z^{-1}s_2z  &=\tau^e(s_2)=\langle s_1B\rangle^{e(r-1)} \left( \langle s_1B\rangle^{e(r-1)-1}\right)^{-1}.
\end{align*}
These are equivalent to
\begin{align*}
zs_i & = s_iz\quad\text{for}\ 3\le i\le r,\\
zs_1B &= s_1Bz,\\
z\langle s_1B\rangle^{e(r-1)} &= s_2z\langle s_1B\rangle^{e(r-1)-1}.
\end{align*}

This allows us to obtain a positive homogeneous presentation for $B(de,e,r)$
in terms of the conventional generators of ${\widetilde{\mathbf A}_{r-1}}
$.
(We note, however, that the following presentation does not give
rise to a quasi-Garside structure.)

\begin{theorem}
\label{atilde-type-presentation}
The group $B(de,e,r)  \cong C_{\infty}^e \ltimes B(\widetilde{\mathbf{A}}_{r-1})$
for $d\ge 2$, $e\ge 1$ and $r\ge 3$ has the following  presentation:

\begin{itemize}
\item Generators: $\{z\} \cup \{s_i \mid 1\le i\le r\}$;
\item Relations:\\
$\begin{array}{ll}
(A_1) & s_is_j=s_js_i\quad\text{for}\ i-j\not\equiv \pm 1 \mod{r},\\
(A_2) & s_is_{i+1}s_i=s_{i+1}s_is_{i+1} \quad\text{for}\  1\le i\le r, \\
(A_3) & zs_i = s_iz \quad\text{for}\  3\le i\le r\\
(A_4) & zs_1B = s_1Bz,\\
(A_5) & z\langle  s_1B\rangle^{e(r-1)} = s_2z\langle  s_1B\rangle^{e(r-1)-1},
\end{array} $
\end{itemize}
where $B=s_rs_{r-1}\cdots s_2$.
Furthermore, on adding the relations $z^d = 1$ and $s_i^2 = 1$ for all
$1\le i\le r$, a
presentation for the reflection group $G(de,e,r)$ is obtained,
where the generators are all reflections.
\end{theorem}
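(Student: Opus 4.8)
The plan is to read the presentation off the semidirect product decomposition rather than manipulate the presentation of Theorem~\ref{new-pre-deer} directly. By Corollaries~\ref{bdeerthm} and~\ref{cor:isomorphism} we already have $B(de,e,r)\cong C_\infty^e\ltimes B(\widetilde{\mathbf A}_{r-1})$, with $C_\infty^e=\langle z\rangle$ acting on the normal factor by $z^{-1}gz=\tau^e(g)$. The key tool is the standard presentation of a semidirect product by an infinite cyclic group: if $H=\langle X\mid R\rangle$ and $\phi\in\operatorname{Aut}(H)$, then $\langle z\rangle\ltimes_\phi H$ has presentation $\langle X,z\mid R,\ z^{-1}xz=\phi(x)\ (x\in X)\rangle$, where each $\phi(x)$ is replaced by a word in $X$. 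Taking $H=B(\widetilde{\mathbf A}_{r-1})$ with the Artin presentation~\eqref{pres:A_til_r} (relations $(A_1),(A_2)$) and $\phi=\tau^e$, this gives $B(de,e,r)\cong\langle z,s_1,\dots,s_r\mid (A_1),(A_2),\ z^{-1}s_iz=\tau^e(s_i)\ (1\le i\le r)\rangle$.

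It then remains to show that, modulo $(A_1)$ and $(A_2)$, the relation set $\{z^{-1}s_iz=\tau^e(s_i):1\le i\le r\}$ is equivalent to $(A_3)$--$(A_5)$. First I would note that $\{s_1B,s_2,s_3,\dots,s_r\}$ is another generating set of $B(\widetilde{\mathbf A}_{r-1})$, since $B=s_r\cdots s_2$ and hence $s_1=(s_1B)B^{-1}$ are words in the $s_i$. Because conjugation by $z$ realizes the automorphism $\tau^e$, specifying $z^{-1}wz=\tau^e(w)$ on any generating set determines it on all of $H$; Lemmas~\ref{lem:com} and~\ref{lem:s2} give $\tau^e(s_1B)=s_1B$, $\tau^e(s_j)=s_j$ for $3\le j\le r$, and $\tau^e(s_2)=\langle s_1B\rangle^{e(r-1)}\bigl(\langle s_1B\rangle^{e(r-1)-1}\bigr)^{-1}$, which are exactly $(A_4)$, $(A_3)$ and $(A_5)$. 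Conversely, in the group presented by $(A_1)$--$(A_5)$ one recovers $z^{-1}s_1z$ from $z^{-1}(s_1B)z$ and $z^{-1}Bz=s_r\cdots s_3\,(z^{-1}s_2z)$, and a short computation in $H$ checks that this word equals $\tau^e(s_1)$. The two relation sets therefore have the same consequences, so the passage is a Tietze transformation.

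For the reflection group I would argue at the level of normal closures. The group $G(de,e,r)$ is the quotient of $B(de,e,r)$ by $N=\langle\langle z^d,\ t_i^2\ (i\in{\mathbb Z}),\ s_j^2\ (3\le j\le r)\rangle\rangle$. Under the identifications $s_2=t_0$, $s_1=At_1A^{-1}$ and $s_j=\sigma_j$, one has $s_2^2=t_0^2$ and $s_1^2=At_1^2A^{-1}$, while every $t_i^2$ is a conjugate of $t_0^2$ or $t_1^2$ via $t_{2m+k}=(t_1t_0)^mt_k(t_1t_0)^{-m}$; hence $N$ coincides with $\langle\langle z^d,\ s_1^2,\dots,s_r^2\rangle\rangle$. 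Adding $z^d=1$ and $s_i^2=1$ to the presentation $(A_1)$--$(A_5)$ thus yields $G(de,e,r)$, consistently with Corollary~\ref{thm:Gdeer}: the relation $z^d=1$ collapses the infinite group $G(\infty,\infty,r)$ to the finite factor $G(de,de,r)$ through the forced identifications $t_i=t_{i+de}$. That the generators are reflections follows from the matrix descriptions: $s_2,\dots,s_r$ become transposition matrices, $s_1$ a conjugate of the reflection $\overline{t_1}$, and $z$ the order-$d$ diagonal reflection $\operatorname{Diag}(\zeta_{de}^e,1,\dots,1)$.

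The only genuine work is the equivalence of the action relations, and this is essentially discharged by the preceding Lemmas~\ref{lem:com} and~\ref{lem:s2}; the main thing to be careful about is the bookkeeping showing that $(A_3)$--$(A_5)$ determine $\tau^e$ on the full generating set $s_1,\dots,s_r$ (in particular the recovery of the action on $s_1$), together with the normal-closure identity $N=\langle\langle z^d,s_1^2,\dots,s_r^2\rangle\rangle$ needed for the reflection-group statement.
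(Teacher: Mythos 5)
Your proposal is correct and follows essentially the same route as the paper: reading the presentation off the decomposition $B(de,e,r)\cong C_\infty^e\ltimes B(\widetilde{\mathbf{A}}_{r-1})$, switching to the generating set $\{s_1B,s_2,\ldots,s_r\}$, and invoking Lemmas~\ref{lem:com} and~\ref{lem:s2} to express the $\tau^e$-action as the relations $(A_3)$--$(A_5)$. You in fact spell out two points the paper leaves implicit --- the Tietze-transformation bookkeeping recovering $z^{-1}s_1z=\tau^e(s_1)$ from $(A_3)$--$(A_5)$, and the normal-closure identification needed for the $G(de,e,r)$ statement --- both of which are handled correctly.
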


\section*{Acknowledgments}

The first author gratefully acknowledges the support of a European Union
\emph{Marie Curie} Fellowship while this work was carried out.
The second author was supported by Basic Science Research Program
through the National Research Foundation of Korea (NRF)
funded by the Ministry of Science, ICT \& Future Planning (NRF-2012R1A1A3006304).
The third author was supported by Basic Science Research Program
through the National Research Foundation of Korea (NRF)
funded by the Ministry of Education (NRF-2013R1A1A2007523).

\end{document}